\renewcommand{\d}{\mathrm{d}}
\renewcommand{\div}{\mathrm{div}}
\newcommand{\nb}{\overline{\nabla}}
\numberwithin{equation}{section}
\author[T. Begley]{Tom Begley}
\address{Tom Begley.
CCA, Centre for Mathematical Sciences,
Wilberforce Road,
Cambridge,
CB3 0WA,
United Kingdom.}
\email{T.Begley@maths.cam.ac.uk}
\author[K. Moore]{Kim Moore}
\thanks{Both authors supported by EPSRC grant EP/H023348/1 at the Cambridge Centre for Analysis}
\address{Kim Moore. 
CCA, Centre for Mathematical Sciences,
Wilberforce Road,
Cambridge,
CB3 0WA,
United Kingdom.}
\email{K.Moore@maths.cam.ac.uk}
\newtheorem{theorem}{Theorem}[section]
\newtheorem{lemma}[theorem]{Lemma}
\newtheorem{corollary}[theorem]{Corollary}
\newtheorem*{theorem*}{Theorem}
\newtheorem*{lemma*}{Lemma}
\newtheorem*{proposition*}{Proposition}
\newtheorem*{corollary*}{Corollary}
\theoremstyle{definition}
\newtheorem*{remark}{Remark}
\title{On short time existence of Lagrangian mean curvature flow}
\begin{document}
\begin{abstract}
We consider a short time existence problem motivated by a conjecture of Joyce in \cite{JoyceCon}. Specifically we prove that given any compact Lagrangian $L\subset \mathbb{C}^n$ with a finite number of singularities, each asymptotic to a pair of non-area-minimising, transversally intersecting Lagrangian planes, there is a smooth Lagrangian mean curvature flow existing for some positive time, that attains $L$ as $t \searrow 0$ as varifolds, and smoothly locally away from the singularities.
\end{abstract}
\maketitle
\thispagestyle{empty}
\section{Introduction}
\noindent A long-standing open problem in the study of Calabi-Yau manifolds is whether given a Lagrangian submanifold, one can find a special Lagrangian in its homology or Hamiltonian isotopy class. Special Lagrangians are always area minimising, so one way to approach the existence problem is to try to minimise area among all Lagrangians in a given class. This minimisation problem turns out to be very subtle and fraught with difficulties. Indeed Schoen-Wolfson \cite{schwo} showed that when the real dimension is $4$, given a particular class one can find a Lagrangian minimising area among Lagrangians in that class, but that the minimiser need not be a special Lagrangian. Later Wolfson\cite{wolf} found a $K3$ surface and a Lagrangian sphere in this surface such that the area minimiser among Lagrangians in the homology class of the sphere, is not special Lagrangian, and the area minimiser in the class is not Lagrangian. \\
An alternative way of approaching the problem is to consider mean curvature flow. Mean curvature flow is a geometric evolution of submanifolds where the velocity at any point is given by the mean curvature vector. This can also be seen as the gradient descent for the area functional. Smoczyk showed in \cite{Smoc96} that the Lagrangian condition is preserved by mean curvature flow if the ambient space is K\"ahler-Einstein, and consequently mean curvature flow has been proposed as a means of constructing special Lagrangians.  In order to flow to a special Lagrangian, one would need to show that the flow exists for all time. This however can't be expected in general, as finite time singularities abound. See for example Neves \cite{Nev12}. For a nice overview on what is known about singularities of Lagrangian mean curvature flow, we refer the reader to the survey paper of Neves \cite{nevessurvey}.\\
A natural question is whether it might be possible to continue the flow in a weaker sense once a singularity develops and, in doing so, to push through the singularity. Since all special Lagrangians are zero-Maslov class, and the Maslov class is preserved by Lagrangian mean curvature flow, of particular interest is the mean curvature flow of zero-Maslov class Lagrangians. In this case, the structure of singularities is relatively well understood. Indeed Neves \cite{Nev06} has shown that a singularity of zero-Maslov class Lagrangian mean curvature flow must be asymptotic to a union of special Lagrangian cones. We note that in $\mathbb{C}^2$ every such union is simply a union of Lagrangian planes, and so the case we consider in the below theorem is not necessarily overly restrictive. In this paper we consider the simplest such singularity, namely that where the singularities are each asymptotic to the union of two non-area-minimising, transversally intersecting Lagrangian planes. Specifically we prove the following theorem which serves as a partial answer to Problem 3.14 in \cite{JoyceCon}.
\begin{theorem*}[Short-time Existence]
Suppose that $L\subset\mathbb{C}^n$ is a compact Lagrangian submanifold of $\mathbb{C}^n$ with a finite number of singularities, each of which is asymptotic to a pair of transversally intersecting planes $P_1 + P_2$ where neither $P_1 + P_2$ nor $P_1 - P_2$ are area minimizing. Then there exists $T > 0$ and a Lagrangian mean curvature flow $(L_t)_{0<t<T}$ such that as $t \searrow 0$, $L_t \rightarrow L$ as varifolds and in $C^\infty_{loc}$ away from the singularities.
\end{theorem*}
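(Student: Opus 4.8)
\emph{Proof strategy.} The plan is to resolve each singularity of $L$ by gluing in a suitably scaled Lagrangian \emph{self-expander}, producing an approximate Lagrangian flow, and then to correct this to an exact flow by a contraction-mapping argument in weighted parabolic spaces adapted to the degenerating neck geometry. Fix a singular point $x_i$ of $L$ with tangent cone the transverse pair $P_1^i + P_2^i$. The hypothesis that neither $P_1^i + P_2^i$ nor $P_1^i - P_2^i$ is area-minimising is precisely the condition (Lawlor's angle criterion) that guarantees a smooth, embedded, zero-Maslov Lagrangian self-expander $\Sigma^i \subset \mathbb{C}^n$ — a submanifold for which $t \mapsto \sqrt{t}\,\Sigma^i$ is, up to tangential reparametrisation, a mean curvature flow — asymptotic to $P_1^i + P_2^i$ with a definite rate of decay. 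I would first record the precise asymptotics of $\Sigma^i$ and the structure of its linearised mean curvature operator, noting that $\sqrt{t}\,\Sigma^i \to P_1^i + P_2^i$ as $t \searrow 0$, both as varifolds and in $C^\infty_{\mathrm{loc}}(\mathbb{C}^n \setminus \{0\})$; this last fact is what will let the glued flow recover the \emph{singular} surface $L$ in the limit, and is exactly why the non-area-minimising (expander) regime, rather than the Lawlor-neck regime, is the relevant one.

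For small $t>0$ I would build $L_t^{\mathrm{app}}$ by excising $L \cap B_{t^{\sigma}}(x_i)$ for a suitable $\sigma \in (0,\tfrac12)$ and each $i$, replacing it by $\sqrt{t}\,\Sigma^i$, and interpolating in the annular neck region $t^{\sigma} \lesssim |x-x_i| \lesssim 2t^{\sigma}$, where after rescaling both $L$ and the expander end are $C^\infty$-close to $P_1^i+P_2^i$. The interpolation must be performed within the class of Lagrangians, which one does at the level of Lagrangian potentials on each of the two sheets — cutting off potentials rather than submanifolds — so that $L_t^{\mathrm{app}}$ is Lagrangian for every $t$ and coincides with $L$ away from the singular set. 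Since $\sqrt{t}\,\Sigma^i$ solves mean curvature flow exactly and $L$ is unchanged off the necks, the defect $E_t := \big(\partial_t L_t^{\mathrm{app}}\big)^{\perp} - H_{L_t^{\mathrm{app}}}$ is supported in the neck regions, and combining the rate at which $L$ converges to its tangent cones with the decay rate of $\Sigma^i$ at infinity one gets $\|E_t\| \to 0$ as $t \searrow 0$, with a polynomial rate, in the $t$-dependent weighted norms introduced next.

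I would then seek the true flow as a Lagrangian normal graph over $L_t^{\mathrm{app}}$ described by a scalar potential $u(\cdot,t)$, so that the mean curvature flow equation becomes a nonlinear parabolic equation of the schematic form $\partial_t u = \mathcal{L}_t u + \mathcal{Q}_t(u) + E_t$, with $\mathcal{L}_t$ degenerating along the necks as $t \searrow 0$ and $\mathcal{Q}_t$ quadratic in the Hessian of $u$. The core analytic step is to prove Schauder-type estimates for $\partial_t - \mathcal{L}_t$, uniform in $t$, in weighted parabolic Hölder spaces adapted to the three length scales present — the expander scale $\sqrt{t}$, the neck scale $t^{\sigma}$, and the macroscopic scale of $L$ — by patching parametrices built from the model heat operators (the heat operator on each $\Sigma^i$, invariant under parabolic rescaling, and the heat operator on the smooth part of $L$). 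Granted these estimates, a contraction mapping on a small ball in the weighted space, using the smallness of $E_t$ and the quadratic nature of $\mathcal{Q}_t$, produces a solution $u$ on $(0,T)$ with $\|u(\cdot,t)\| \to 0$ as $t \searrow 0$. The resulting $L_t$ is then a smooth Lagrangian mean curvature flow for $0<t<T$ — Lagrangian because the construction stays within Lagrangian deformations (or by Smoczyk's theorem applied to the Lagrangian approximate data) — and since $L_t^{\mathrm{app}} \to L$ and $L_t - L_t^{\mathrm{app}} \to 0$ both as varifolds and in $C^\infty_{\mathrm{loc}}$ away from the singular set, the same holds for $L_t$.

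The hard part is the linear parabolic estimate: controlling $\partial_t - \mathcal{L}_t$ for the \emph{degenerating} operator with constants uniform as $t \searrow 0$. This is a genuine singular-perturbation, multi-scale problem — both the operator and the underlying manifold degenerate in the limit — so one must choose weights that simultaneously capture the decay of $\Sigma^i$ at infinity, the regularity of $L$ at $x_i$, and the matching across the neck, and then show the patched parametrix loses nothing; the fact that the expander equation is exactly scale-invariant is what prevents the constants from blowing up. A secondary but real subtlety is carrying out the gluing so that it is genuinely Lagrangian and ensuring the corrected flow remains Lagrangian while still attaining the singular surface $L$ in the varifold sense.
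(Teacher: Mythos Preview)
Your approach is genuinely different from the paper's. The paper does \emph{not} attempt a direct parabolic gluing or contraction-mapping construction. Instead, following Ilmanen--Neves--Schulze, it builds a one-parameter family of smooth \emph{initial conditions} $L^s$ (gluing $\sqrt{2s}\,\Sigma$ into $L$ at a fixed scale $s$, independent of time), runs ordinary smooth Lagrangian mean curvature flow from each $L^s$, and proves that the Gaussian density ratios $\Theta^s_t(x_0,r)$ stay below $1+\varepsilon_0$ uniformly in $s$ for $r^2 \le \tau t$. The two key ingredients are a $C^{1,\alpha}$ stability theorem for the self-expander---resting on the Lotay--Neves and Imagi--Joyce--Oliveira~dos~Santos uniqueness theorem for zero-Maslov expanders asymptotic to $P$---and a localized monotonicity formula for $(\beta_t + 2t\,\theta_t)^2 \rho$, which together force the rescaled flows $\tilde L^s_t$ to remain close to $\Sigma$. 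White's local regularity then converts the density-ratio bound into uniform curvature bounds $|A| \le C/\sqrt{t}$, yielding a uniform existence time, and one extracts a subsequential limit of the flows as $s \to 0$. This compactness route entirely sidesteps the degenerate weighted parabolic Schauder analysis you correctly identify as the crux of your scheme; the price is less precise asymptotic control of $L_t$ near the singularity than a successful implementation of your method would give.

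There is also a slip in your sketch: if $L_t^{\mathrm{app}}$ coincides with the fixed surface $L$ off the necks, then on that region $E_t = -H_L$, which is $O(1)$ and \emph{not} supported in the neck annuli, so $\|E_t\|$ cannot tend to zero. You would need to let the smooth part of $L$ evolve by mean curvature flow as well (which is itself nontrivial, since that part is noncompact with conical ends) before the error localizes as you claim. This is repairable, but not as written.
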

We remark that the assumptions $L \subset \mathbb{C}^n$ and $L$ compact are made to simplify the analysis in the sequel, however since the analysis is all of an entirely local nature we may relax this to $L \subset M$ for some Calabi-Yau manifold $M$, and to $L$ non-compact provided, in the latter case, that we impose suitable conditions at infinity.\\
In the one-dimensional case all curves are Lagrangian. Ilmanen-Neves-Schulze considered the flow of planar networks, that is finite unions of embedded line segments of non-zero length meeting only at their endpoints, in \cite{IlNeSc14}. They showed that there exists a flow of regular networks, that is networks where at any meeting point exactly three line segments come together at angles of $2\pi/3$, starting at any initial non-regular network. To do so they performed a gluing procedure to get an approximating family of regular initial conditions, and proved uniform estimates on the corresponding flows, allowing them to pass to a limit of flows to prove the result. The proof here is based heavily on their arguments, and many of the calculations we do are similar to those in that paper. To prove the short-time existence, we construct a smooth approximating family $L^s$ of initial conditions via a surgery procedure. Specifically we take a singularity asymptotic to some non-area-minimising pair of planes $P_1 + P_2$, cut it out and glue in a piece of the Lagrangian self-expander asymptotic to those planes at a scale determined by $s$. For full details see Section 7. Each of these approximating Lagrangians is smooth, and hence standard short time existence theory gives a smooth Lagrangian mean curvature flow $L^s_t$ corresponding to each $s$. As $s\rightarrow 0$ the curvature of $L^s$ blows up so the existence time of the flows $L^s_t$ guaranteed by the standard short time existence theory goes to zero. Instead we are able to prove uniform estimates on the Gaussian density ratios of $L^s_t$, which combined with the local regularity result of Brian White \cite{Whi05} provides uniform curvature estimates, interior in time, on the flows $L^s_t$, from which we obtain a uniform time of existence allowing us to pass to a limit of flows and prove the main result.\\
There are two key components in the proof of the estimates on the Gaussian density ratios. The first is a stability result for self-expanding solutions to Lagrangian mean curvature flow. More specifically we show that if a Lagrangian is weakly close to a Lagrangian self expander in an $L^2$ sense, then it is close in a stronger $C^{1,\alpha}$ sense. The proof of this stability result depends crucially on a uniqueness result for zero-Maslov smooth self-expanders asymptotic to transverse pairs of planes due to Lotay-Neves \cite{LotNe13} and Imagi-Joyce-Oliveira dos Santos \cite{ImJoOl14}. The second component is a monotonicity formula for the self-expander equation, which allows us to show that the approximating family of initial conditions that we construct in the proof, which are self-expanders in a ball, remain weakly close to the self-expander for a short time. The combination of these results tells us that the evolution of the approximating flows is close to the evolution of the self-expander near the singularity. Since self-expanders move by dilation, we have good curvature control on the self-expander, and hence estimates on the Gaussian densities of the approximating flow.\\
\emph{Organisation.} The paper is organised as follows. In Section 2 we recall key definitions and results. In Section 3 we derive evolution equations and monotonicity formulas for geometric quantities under the flow. In Section 4 we prove the Stability result mentioned above. Section 5 contains the proof of the main theorem which gives uniform estimates on the Gaussian density ratios of the approximating family near the singularity. From this we get uniform estimates, interior in time, on the curvature of the approximating family which allows us to appeal to a compactness argument. Section 6 contains the proof of the short time existence result itself. Section 7 details the construction of the approximating family used in the proof of the main theorem. Finally the appendix, Section 8, contains miscellaneous technical results, including Ecker-Huisken style curvature estimates for high-codimension mean curvature flow.\\
\emph{Acknowledgements.} Both authors would like to thank Jason Lotay and Felix Schulze for their help, guidance and feedback.

\section{Preliminaries}
\subsection{Mean Curvature Flow}
Let $M^n \subset \mathbb{R}^{n+k}$ be an $n$-dimensional embedded submanifold of $\mathbb{R}^{n+k}$. A mean curvature flow is a one parameter family of immersions $F:M\times[0,T)\rightarrow\mathbb{R}^{n+k}$ such that the normal velocity at any point is given by the mean curvature vector, that is
\begin{equation*}
	\frac{dF}{dt} = \vec{H}.
\end{equation*}
Of particular interest to us are \emph{self-expanders}. These are submanifolds $M \subset \mathbb{R}^{n+k}$ satisfying the elliptic equation
\begin{equation*}
	\vec{H} - x^\perp = 0
\end{equation*}
where $(\cdot)^\perp$ is the projection to the normal space. In this case one can show that $M_t = \sqrt{2t}M$ is a solution of mean curvature flow. 
\\
A fundamental tool in the analysis of mean curvature flow is the Gaussian density. We first define the backwards heat kernel $\rho_{(x_0,t_0)}$ as follows
\begin{equation*}
	\rho_{(x_0,t_0)}(x,t) := \frac{1}{(4\pi(t_0 - t))^{n/2}}\exp\left(-\frac{|x-x_0|^2}{4(t_0 - t)}\right).
\end{equation*}
Next, for a mean curvature flow $(M_t)_{0\leq t<T)}$ we define the \emph{Gaussian density ratio} centred at $(x_0,t_0)$ and at scale $r$ by
\begin{align*}
	\Theta(x_0,t_0,r) :&= \int_{M_{t_0 - r^2}} \rho_{(x_0,t_0)}(x,t_0 - r^2)d\mathcal{H}^n(x) \\
	&= \int_{M_{t_0 - r^2}} \frac{1}{(4\pi r^2)^{n/2}}\exp\left(-\frac{|x-x_0|^2}{4r^2}\right)d\mathcal{H}^n(x)
\end{align*}
this is defined for $0 < t_0 \leq T$, $0 < r \leq \sqrt{t_0}$ and any $x_0 \in \mathbb{R}^{n+k}$. Huisken in \cite{Hui90} proved the following monotonicity formula.
\begin{theorem}[Monotonicity Formula]
If $(M_t)_{0\leq t<t_0}$ is a mean curvature flow, then
\begin{equation*}
	\frac{d}{dt}\int_{M_t} \rho_{(x_0,t_0)}(x,t)d\mathcal{H}^n(x) = -\int_{M_t} \left|\vec{H} - \frac{(x_0 - x)^\perp}{2(t_0 - t)}\right|^2\rho_{(x_0,t_0)}(x,t)d\mathcal{H}^n(x).
\end{equation*}
\end{theorem}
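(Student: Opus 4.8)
The plan is to carry out Huisken's classical parabolic computation. By translating we may assume $x_0 = 0$; set $\tau := t_0 - t$ and $\rho := \rho_{(0,t_0)}$, so that $\rho(x,t) = (4\pi\tau)^{-n/2}\exp(-|x|^2/4\tau)$. First I would parametrise the flow by a one-parameter family of immersions $F(\cdot,t)$ of a fixed manifold $M$, which gives
\[
  \int_{M_t}\rho\,d\mathcal{H}^n \;=\; \int_M \rho\big(F(p,t),t\big)\,J(p,t)\,d\mathcal{H}^n(p),
\]
where $J(\cdot,t)$ is the Jacobian of $F(\cdot,t)$. Using $\partial_t F = \vec H$ together with the standard first-variation identity $\partial_t J = -|\vec H|^2 J$ under mean curvature flow, differentiation under the integral sign yields
\[
  \frac{d}{dt}\int_{M_t}\rho\,d\mathcal{H}^n \;=\; \int_{M_t}\Big(\frac{\partial\rho}{\partial t} + \langle D\rho,\vec H\rangle - |\vec H|^2\rho\Big)\,d\mathcal{H}^n,
\]
where $D$ denotes the ambient gradient on $\mathbb{R}^{n+k}$.

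The heart of the argument is to trade the time derivative $\partial_t\rho$ for intrinsic quantities on $M_t$. A direct computation gives $D\rho = -\tfrac{1}{2\tau}\,x\,\rho$ and $\partial_t\rho = \big(\tfrac{n}{2\tau} - \tfrac{|x|^2}{4\tau^2}\big)\rho$. Writing $x^\top, x^\perp$ for the tangential and normal components of the position vector along $M_t$, one has $\div_{M_t}x = n$ and $\div_{M_t}(x^\perp) = -\langle x^\perp,\vec H\rangle$, hence $\div_{M_t}(x^\top) = n + \langle x,\vec H\rangle$; therefore
\[
  \Delta_{M_t}\rho \;=\; \div_{M_t}\!\Big(-\tfrac{1}{2\tau}\,x^\top\rho\Big) \;=\; -\frac{n}{2\tau}\rho - \frac{\langle x,\vec H\rangle}{2\tau}\rho + \frac{|x^\top|^2}{4\tau^2}\rho.
\]
Adding this to the expression for $\partial_t\rho$ and using $|x|^2 - |x^\top|^2 = |x^\perp|^2$ gives the pointwise identity
\[
  \frac{\partial\rho}{\partial t} + \Delta_{M_t}\rho \;=\; -\frac{|x^\perp|^2}{4\tau^2}\rho - \frac{\langle x,\vec H\rangle}{2\tau}\rho.
\]

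Since each $M_t$ is compact without boundary we have $\int_{M_t}\Delta_{M_t}\rho\,d\mathcal{H}^n = 0$, so inside the integral I may replace $\partial_t\rho$ by the right-hand side of the last display. Substituting, and using $\langle D\rho,\vec H\rangle = -\tfrac{1}{2\tau}\langle x,\vec H\rangle\rho$, collapses everything to
\[
  \frac{d}{dt}\int_{M_t}\rho\,d\mathcal{H}^n \;=\; -\int_{M_t}\Big(|\vec H|^2 + \frac{\langle x,\vec H\rangle}{\tau} + \frac{|x^\perp|^2}{4\tau^2}\Big)\rho\,d\mathcal{H}^n.
\]
Completing the square, and remembering that $\vec H\perp M_t$ (so $\langle x,\vec H\rangle = \langle x^\perp,\vec H\rangle$), that $\tau = t_0 - t$, and that reinstating $x_0$ amounts to the substitution $x \mapsto x - x_0$, the integrand is precisely $\big|\vec H - \tfrac{(x_0-x)^\perp}{2(t_0-t)}\big|^2\rho_{(x_0,t_0)}$, which is the asserted monotonicity formula. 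No genuine obstacle arises; the only points meriting care are the justification of differentiating under the integral sign and the vanishing of $\int_{M_t}\Delta_{M_t}\rho\,d\mathcal{H}^n$ (trivial here by compactness, but in a non-compact version one would invoke the rapid Gaussian decay of $\rho$ together with controlled area growth of $M_t$), and the careful bookkeeping of the signs coming from $\tau = t_0 - t$ and the direction of the shift $x_0 - x$.
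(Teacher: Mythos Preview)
Your argument is correct and is precisely Huisken's standard computation. The paper does not actually supply its own proof of this theorem---it is quoted from \cite{Hui90} as a preliminary result---but the pointwise identity you derive is exactly the content of Lemma~\ref{lem:eveq}(iii) (there written as $\tfrac{d\rho}{dt} = -\Delta\rho - |\vec H - \tfrac{(x_0-x)^\perp}{2(t_0-t)}|^2\rho + H^2\rho$), from which the monotonicity formula follows by integrating and using $\tfrac{d}{dt}d\mu = -|\vec H|^2 d\mu$ together with $\int_{M_t}\Delta\rho\,d\mathcal{H}^n = 0$; so your route and the paper's computation are essentially the same.
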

In particular, it follows that $\Theta(x_0,t_0,r)$ is non-decreasing in $r$. Consequently we can define the \emph{Gaussian density} as
\begin{equation*}
	\Theta(x_0,t_0) := \lim_{r\searrow 0} \Theta(x_0,t_0,r).
\end{equation*}
One can show that $(x_0,t_0)$ is a regular point of the flow if and only if $\Theta(x_0,t_0) = 1$. The following local regularity theorem of White \cite{Whi05} says that if the density ratios are close to $1$, then that is enough to get curvature estimates.
\begin{theorem}[Local regularity]
Let $\tau > 0$. There are $\varepsilon_0(n,k) > 0$ and $C = C(n,k,\tau) < \infty$ such that if $\partial M_t \cap B_{2r} = \emptyset$ for $t \in [0,r^2)$ and
\begin{equation*}
	\Theta(x,t,\rho) \leq 1 + \varepsilon_0  \quad\quad \rho \leq \tau\sqrt{t}, \: x\in B_{2r}(x_0), \: t \in [0,r^2)
\end{equation*}
Then
\begin{equation*}
	|A|(x,t) \leq \frac{C}{\sqrt{t}} \quad\quad x \in B_r(x_0), \: t\in [0,r^2).
\end{equation*}
\end{theorem}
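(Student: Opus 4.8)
\emph{Proof proposal.} The statement is a parabolic analogue of the $\varepsilon$-regularity theorems of Allard and of Schoen--Simon, and the plan is to prove it by the standard blow-up/contradiction scheme, with Huisken's monotonicity formula (the Monotonicity Formula of Section 2) playing the role played by the monotonicity in those theorems. A first reduction: parabolic rescaling $x\mapsto r^{-1}(x-x_0)$, $t\mapsto r^{-2}t$ sends mean curvature flows to mean curvature flows and leaves every Gaussian density ratio $\Theta(x,t,\rho)$ unchanged, so it suffices to treat $r=1$, $x_0=0$ and to produce $\varepsilon_0(n,k)$ and $C(n,k,\tau)$ with the property: if $\partial M_t\cap B_2=\emptyset$ and $\Theta(x,t,\rho)\le 1+\varepsilon_0$ for all $x\in B_2$, $t\in[0,1)$, $\rho\le\tau\sqrt t$, then $|A|(x,t)\sqrt t\le C$ on $B_{1/2}\times[0,1)$.

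Suppose no such $\varepsilon_0,C$ exist. Then there are mean curvature flows $(M^i_t)_{0\le t<1}$ with $\partial M^i_t\cap B_2=\emptyset$, with $\Theta^i(x,t,\rho)\le 1+\varepsilon_i$ for all admissible $(x,t,\rho)$ where $\varepsilon_i\searrow 0$, yet $\sup_{B_{1/2}\times[0,1)}|A^i|\sqrt t\ge i$. A Huisken-type point-selection argument -- maximise $|A^i|(x,t)$ against a parabolic cut-off weight that vanishes on the ``bad'' part of the parabolic boundary and incorporates the factor $\sqrt t$ -- yields points $(p_i,s_i)$ with $p_i\in B_{1/2}$, $Q_i:=|A^i|(p_i,s_i)\to\infty$, $Q_i\sqrt{s_i}\to\infty$, and $|A^i|\le 2Q_i$ on a parabolic neighbourhood of $(p_i,s_i)$ whose size, measured after rescaling by $Q_i$, tends to infinity. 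Set $\widetilde M^i_t:=Q_i(M^i_{s_i+Q_i^{-2}t}-p_i)$. These are mean curvature flows with $|\widetilde A^i|(0,0)=1$ and $|\widetilde A^i|\le 2$ on parabolic balls exhausting space-time; invoking the Ecker--Huisken-type interior estimates the paper establishes in its appendix, the uniform bound on the second fundamental form upgrades to uniform bounds on all its derivatives, so a subsequence converges in $C^\infty_{loc}$ to a smooth mean curvature flow $\widetilde M^\infty_t$ which is \emph{non-flat} (since $|A|(0,0)=1$) and, because $s_iQ_i^2\to\infty$, \emph{ancient}, i.e.\ defined for all $t\le 0$.

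Because density ratios are scale invariant, $\widetilde\Theta^i(\tilde x,\tilde t,\tilde\rho)=\Theta^i(x,t,\rho)$ with $x=p_i+\tilde x/Q_i$, $t=s_i+\tilde t/Q_i^2$, $\rho=\tilde\rho/Q_i$; for fixed $(\tilde x,\tilde t,\tilde\rho)$ and $i$ large this is an admissible instance of the hypothesis (the constraint $\rho\le\tau\sqrt t$ becomes $\tilde\rho\le\tau\sqrt{s_iQ_i^2+\tilde t}$, which holds eventually), so $\widetilde\Theta^i(\tilde x,\tilde t,\tilde\rho)\le 1+\varepsilon_i$. The density-ratio bound also furnishes uniform Euclidean area ratios, so the integrals pass to the limit, giving $\Theta^\infty(x,t,\rho)\le 1$ for \emph{every} centre and \emph{every} scale. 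Now for each $(x_0,t_0)$, Huisken's monotonicity makes $\Theta^\infty(x_0,t_0,\rho)$ non-increasing as $\rho\searrow 0$ with limit $\Theta^\infty(x_0,t_0)\ge 1$, while it is $\le 1$; hence $\Theta^\infty(x_0,t_0,\rho)\equiv 1$, and the equality case of the monotonicity formula forces $\vec H=(x_0-x)^\perp/2(t_0-t)$ along the flow. Applying this at two distinct centres $x_0,x_1\in\widetilde M^\infty_{t_0}$ gives $(x_0-x_1)^\perp\equiv 0$, so every tangent space of $\widetilde M^\infty_{t_0}$ contains $x_0-x_1$; letting the centres vary forces the component of $\widetilde M^\infty_{t_0}$ through $0$ to be an affine $n$-plane, whence $\widetilde M^\infty$ is a static plane -- contradicting $|A|(0,0)=1$. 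Unravelling the rescaling, this contradiction proves the theorem.

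The step I expect to be the main obstacle is the point-selection and rescaling. The degenerate weight $\sqrt t$ in the hypothesis and conclusion is genuinely necessary, since the flow may fail to be smooth at $t=0$ (for instance when it emanates from a cone), so the maximisation must be arranged so that this factor is absorbed correctly and so that simultaneously $Q_i\sqrt{s_i}\to\infty$ (which makes the blow-up limit ancient and lets the full range of density scales survive in the limit) and $|A^i|\le 2Q_i$ on parabolic neighbourhoods of rescaled size tending to infinity (which makes the limit flow exist, with bounded curvature, on all of $(-\infty,0]$). Once this is arranged, the remaining ingredients -- scale invariance of $\Theta$, convergence of density ratios under $C^\infty_{loc}$ convergence with area bounds, and the rigidity case of Huisken's formula -- are standard, and the higher-derivative estimates needed for the compactness step are exactly the Ecker--Huisken-type estimates placed in the appendix.
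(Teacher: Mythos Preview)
The paper does not prove this statement at all: it is quoted from White \cite{Whi05} as a known tool (``The following local regularity theorem of White \cite{Whi05} \dots''), so there is no in-paper proof to compare against. Your proposal is the standard blow-up/contradiction argument that underlies White's theorem, and the overall strategy---parabolic rescaling, point selection producing $Q_i\sqrt{s_i}\to\infty$, compactness to an ancient flow with unit density ratios, then rigidity in Huisken's monotonicity forcing the limit to be a plane---is correct and is essentially how the result is proved in the literature.

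Two small corrections. First, the appendix of this paper does \emph{not} contain the higher-derivative (Shi-type) estimates you invoke for the compactness step; Corollary \ref{cor:curvatureest} bounds $|A|$ from graphical/gradient data, not $|\nabla^m A|$ from $|A|$. For the smooth convergence of the rescaled flows you should instead appeal to the standard interior derivative estimates for mean curvature flow (or parabolic Schauder theory once the flow is locally graphical with bounded gradient), which follow from a uniform bound on $|A|$. Second, your rigidity step is slightly glib: the inequality $\Theta^\infty(x_0,t_0)\ge 1$ holds only for $x_0\in\widetilde M^\infty_{t_0}$, so the ``two centres'' argument must restrict to such points; with that caveat the deduction that each connected component is a static plane goes through (alternatively, once the limit is a self-shrinker with Gaussian density one, one can simply cite the entropy characterisation of planes). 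Neither issue is a genuine gap in the strategy.
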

Finally, we introduce what it means for two manifolds to be \emph{$\varepsilon$-close in $C^{1,\alpha}$}. Given an open set $U$ and two n-dimensional manifolds $\Sigma$ and $L$ defined in $U$, we say that $\Sigma$ and $L$ are $1$-close in $C^{1,\alpha}(W)$ for any $W$ with $\mathrm{dist}(W, \partial U) \geq 1$ if for all $x \in W$, $B_1(x) \cap \Sigma$ and $B_1(x) \cap L$ are both graphical over some common $n$-dimensional plane, and if $u$ and $v$ denote the respective graph functions then $\|u - v\|_{1,\alpha} \leq 1$. We then say that $\Sigma$ and $L$ are $\varepsilon$-close in $W$ if after rescaling by a factor $1/\varepsilon$, $\Sigma$ and $L$ are $1$-close in $\varepsilon^{-1}W$ for any $W$ with $\mathrm{dist}(\varepsilon^{-1}W, \varepsilon^{-1}\partial U) \geq 1$.

\subsection{Lagrangian Submanifolds and Lagrangian Mean Curvature Flow}
We consider $\mathbb{C}^n$ with the standard complex coordinates $z_j = x_j + iy_j$. We will often identify $\mathbb{C}^n$ with $\mathbb{R}^{2n}$. We let $J$ denote the standard complex structure on $\mathbb{C}^n$ and $\omega$ the standard symplectic form on $\mathbb{C}^n$. We say that a smooth $n$-dimensional submanifold of $\mathbb{C}^n$ is \emph{Lagrangian} if $\omega|_L = 0$. We also consider the closed $n$-form $\Omega$, called the \emph{holomorphic volume form}, defined by
\begin{equation*} 
	\Omega := dz_1\wedge\dots\wedge dz_n.
\end{equation*}
On any oriented Lagrangian a simple computation shows that $\Omega|_L = e^{i\theta_L} \mathrm{vol}_L$, where $\mathrm{vol}_L$ is the volume form on $L$. $e^{i\theta_L}:L\rightarrow S^1$ is called the \emph{Lagrangian phase}. $\theta_L$ is called the \emph{Lagrangian angle}, and may be a multi-valued function. We henceforth suppress the subscript $L$. In the case that $\theta$ is a single valued function, we say that the Lagrangian $L$ is \emph{zero-Maslov}. An equivalent condition is $[d\theta] = 0$, that is, $d\theta$ is cohomologous to $0$. If $\theta \equiv \theta_0$ is constant, then we say that $L$ is \emph{special Lagrangian}. In this case $L$ is calibrated by $\mathrm{Re}(e^{-i\theta_0}\mathrm{vol}_L)$, and hence is area-minimising in its homology class.\\
We also consider the Liouville form $\lambda$ on $\mathbb{C}^n$ defined by
\begin{equation*}
	\lambda := \sum_{j=1}^n x_jdy_j - y_jdx_j.
\end{equation*}
A simple calculation verifies that $d\lambda = 2\omega$. If there exists some function $\beta$ such that $\lambda|_L = d\beta$ then we say that $L$ is \emph{exact}. In this paper we will be more interested in local exactness, that is when the Liouville form $\lambda$ only has a primitive in some open set.\\
The following remarkable property of smooth Lagrangians relates the Lagrangian angle and mean curvature vector (see for example \cite{ThoYau02})
\begin{equation*}
	\vec{H} = J\nabla\theta.
\end{equation*}
Consequently we see that the smooth minimal Lagrangians are exactly the smooth special Lagrangians.\\
A \emph{Lagrangian mean curvature flow} is a mean curvature flow $(L_t)_{0\leq t<T}$ with $L_0$ Lagrangian. As proved by Smoczyk \cite{Smoc96}, it turns out that the Lagrangian condition is preserved by the mean curvature flow.

\section{Evolution Equations and Monotonicity Formulas}\label{sec:eveq}
In this section we compute evolution equations for different geometric quantities under the flow, and then use these to prove a local monotonicity formula for a primitive of the expander equation.
\begin{lemma}\label{lem:eveq}
The following evolution equations hold.
\begin{itemize}
\item[(i)] $\frac{d\theta_t}{d t} = \Delta \theta_t$
\item[(ii)] In an open set where the flow is exact and zero-Maslov $\frac{d\beta_t}{d t} = \Delta \beta_t - 2\theta_t$
\item[(iii)] $\frac{d \rho_{(x_0,t_0)}}{d t} = - \Delta\rho_{(x_0,t_0)} - \left|\vec{H} - \frac{(x_0 - x)^\perp}{2(t_0 -t)}\right|^2\rho_{(x_0,t_0)} + H^2\rho_{(x_0,t_0)}$.
\end{itemize}
\end{lemma}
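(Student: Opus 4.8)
The plan is to establish each identity by a direct computation, using the standard first-variation-type formulas for geometric quantities evolving under mean curvature flow, together with the special structure of Lagrangian flows. For (i), I would start from the formula $\vec H = J\nabla\theta$ and the fact that along the flow the induced metric evolves by $\frac{d}{dt}g_{ij} = -2\langle \vec H, A_{ij}\rangle$. Differentiating the relation $\Omega|_{L_t} = e^{i\theta_t}\mathrm{vol}_{L_t}$ in $t$, and using that $\Omega$ is a fixed closed form on $\mathbb{C}^n$ while $\frac{dF}{dt} = \vec H$, one obtains an expression for $\frac{d\theta_t}{dt}$ in terms of the Lie derivative of $\Omega$ along $\vec H$; invoking Cartan's formula and the Lagrangian condition reduces this to $\mathrm{div}_{L_t}(\vec H)$ suitably interpreted, which by $\vec H = J\nabla\theta$ and the compatibility of $J$ with the metric equals $\Delta\theta_t$. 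This is essentially the computation in \cite{Smoc96, ThoYau02}, so I would cite it and only sketch the key cancellations.

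For (ii), in an open set where the flow is zero-Maslov and exact, write $\lambda|_{L_t} = d\beta_t$. I would differentiate this relation in $t$: since $\frac{dF}{dt} = \vec H$, the time derivative of the pulled-back form $\lambda|_{L_t}$ is the pullback of $\mathcal{L}_{\vec H}\lambda$, and by Cartan's formula $\mathcal{L}_{\vec H}\lambda = d(\iota_{\vec H}\lambda) + \iota_{\vec H}d\lambda = d(\iota_{\vec H}\lambda) + 2\iota_{\vec H}\omega$. The term $\iota_{\vec H}\lambda$ restricted to $L_t$, combined with the commutation of $d/dt$ and the exterior derivative on $L_t$ (which produces an extra term from the motion of the submanifold), should be organised so that $\frac{d\beta_t}{dt}$ is determined up to a closed form; using $\iota_{\vec H}\omega = \iota_{J\nabla\theta}\omega = -d\theta$ (the defining property linking $\vec H$, $J$ and $\omega$) gives the $-2\theta_t$ term, and the remaining piece is identified with $\Delta\beta_t$ by relating $\iota_{\vec H}\lambda$ on $L_t$ to $\langle x^\perp, \vec H\rangle$-type quantities and using $\Delta\beta_t = \mathrm{div}_{L_t}(\lambda^\sharp)$. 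I expect this to be the most delicate computation, because one must be careful about the distinction between $d/dt$ acting on forms on the fixed abstract manifold versus on the moving images, and about the normalisation $d\lambda = 2\omega$ which is responsible for the factor $2$.

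For (iii), this is a purely extrinsic computation on the evolving submanifold with no use of the Lagrangian condition. Starting from the fixed function $\rho_{(x_0,t_0)}(x,t)$ on $\mathbb{R}^{n+k}\times(-\infty,t_0)$, the total time derivative along the flow splits as $\frac{d\rho}{dt} = \partial_t\rho + \langle \bar\nabla\rho, \vec H\rangle$, where $\bar\nabla$ is the ambient gradient. One then uses the standard identity $\Delta_{L_t}\rho = \mathrm{tr}(\bar\nabla^2\rho|_{T L_t}) + \langle\bar\nabla\rho, \vec H\rangle$ together with the explicit Gaussian form of $\rho$: $\bar\nabla\rho = -\frac{(x-x_0)}{2(t_0-t)}\rho$ and $\partial_t\rho = \left(\frac{|x-x_0|^2}{4(t_0-t)^2} - \frac{n}{2(t_0-t)}\right)\rho$. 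Substituting and completing the square in $\vec H - \frac{(x_0-x)^\perp}{2(t_0-t)}$ — exactly as in Huisken's monotonicity computation, Theorem 2.1 — produces the stated identity; the leftover $H^2\rho$ term arises precisely because here we take the pointwise time derivative rather than integrating, so the divergence term that vanishes under $\int_{M_t}$ in Theorem 2.1 is instead kept, and the sign bookkeeping gives $-\Delta\rho$ on the right. I would present (iii) in full since it is short and self-contained, and lean on citations for (i) and (ii).
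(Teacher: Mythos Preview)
Your plan is correct and matches the paper's proof essentially line for line: Cartan's formula applied to $\Omega$ for (i), Cartan's formula applied to $\lambda$ for (ii), and the direct Gaussian computation with completion of the square for (iii). The one place where you are vague and the paper is explicit is the identification $\iota_{\vec H}\lambda = \Delta\beta_t$ in (ii): the paper first shows $\nabla\beta_t = (Jx)^T$ from $d\beta_t = \lambda|_{L_t}$, then computes in normal coordinates that $\nabla_i\nabla_j\beta_t = \langle Jx, h_{ij}\rangle$, whence $\Delta\beta_t = \langle Jx, \vec H\rangle = \iota_{\vec H}\lambda$ (note it is $\langle Jx,\vec H\rangle$, not $\langle x^\perp,\vec H\rangle$).
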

\begin{proof}
(i) Differentiating the holomorphic volume form $\Omega$ and using Cartan's formula we have
\begin{align*}
	\frac{d\Omega}{d t} = \mathcal{L}_{\vec{H}}\Omega &= d(\vec{H} \lrcorner \Omega) = d(ie^{i\theta_t}\nabla\theta_t \lrcorner \mathrm{vol}_{L_t}) \\
	&= ie^{i\theta_t}d(\nabla\theta_t  \lrcorner \mathrm{vol}_{L_t}) - e^{i\theta_t}d\theta_t\wedge(\nabla\theta_t \lrcorner  \mathrm{vol}_{L_t}) \\
	&= ie^{i\theta_t}\div(\nabla\theta_t)\mathrm{vol}_{L_t} - e^{i\theta_t}d\theta_t\wedge(\nabla\theta_t \lrcorner  \mathrm{vol}_{L_t}).
\end{align*}
On the other hand
\begin{align*}
	\frac{d\Omega}{d t} = \frac{d}{d t}\left(e^{i\theta_t}\mathrm{vol}_{L_t}\right) = ie^{i\theta_t}\frac{d\theta_t}{d t}\mathrm{vol}_{L_t} + e^{i\theta_t}\frac{d}{d t}\mathrm{vol}_{L_t}.
\end{align*}
Comparing real and imaginary parts we have (i). \\
(ii) Using Cartan's formula again and denoting $\lambda_t := \lambda|_{L_t}$, where $\lambda = \sum_{i=1}^n x^id y^i - y^i d x^i$ is the Liouville form, we have
\begin{align*}
	d\left(\frac{d\beta_t}{d t}\right) = \mathcal{L}_{\vec{H}}\lambda_t &= d(\vec{H}\lrcorner\lambda_t) + \vec{H}\lrcorner d\lambda_t \\
	&= d(\vec{H}\lrcorner \lambda_t) + J\nabla\theta_t \lrcorner 2\omega \\
	&= d(\vec{H}\lrcorner\lambda_t) - 2d\theta_t.
\end{align*}
Hence
\begin{equation*}
	d\left(\frac{d\beta_t}{d t} - \vec{H}\lrcorner\lambda_t + 2\theta_t\right) = 0.
\end{equation*}
By possibly adding a time-dependent constant to $\beta_t$ this implies
\begin{equation*}
	\frac{d\beta_t}{d t} = \vec{H}\lrcorner\lambda_t - 2\theta_t.
\end{equation*}
Hence it only remains to show that $\vec{H}\lrcorner\lambda_t = \Delta \beta_t$. We first show that $\nabla \beta_t = (Jx)^T$. Indeed we have $d\beta_t = \lambda_t$, thus for a tangent vector $\tau$
\begin{equation*}
	\langle \nabla \beta_t, \tau\rangle = d\beta_t(\tau) = \lambda_t(\tau) = \langle Jx,\tau \rangle = \langle (Jx)^T, \tau \rangle.
\end{equation*}
With this in hand we now choose normal coordinates at a point $x$, and denote the coordinate tangent vectors by $\{\partial_1,\dots,\partial_n\}$. Then we calculate
\begin{align*}
	\nabla_i\nabla_j\beta_t = \langle\nabla_i(Jx)^T, \partial_j\rangle &= \partial_i\langle Jx, \partial_j \rangle - \langle (Jx)^T, D_{\partial_i}\partial_j\rangle \\
	&= \langle J\partial_i, \partial_j\rangle + \langle Jx, D_{\partial_i}\partial_j\rangle - \langle (Jx)^T, D_{\partial_i}\partial_j \rangle \\
	&= \omega(\partial_i, \partial_j) + \langle (Jx)^\perp, D_{\partial_i}\partial_j \rangle \\
	&= \langle Jx, h_{ij} \rangle,
\end{align*}
where $h_{ij}$ is the second fundamental form. Taking the trace of each side we have
\begin{equation*}
	\Delta\beta_t = \langle Jx, \vec{H} \rangle = \vec{H}\lrcorner \lambda_t.
\end{equation*}
(iii) We may assume without loss of generality that $x_0 = 0$ and $t_0 = 0$, and we will suppress the subscripts of $\rho$. We first calculate
\begin{equation*}
	\frac{\partial\rho}{\partial t} = \left(-\frac{n}{2t} - \frac{|x|^2}{4t^2}\right)\rho \quad\quad\quad
	\frac{\partial \rho}{\partial x^i} = \frac{x^i}{2t}\rho\quad\quad\quad
	\frac{\partial^2\rho}{\partial x^i \partial x^j} = \left(\frac{\delta_{ij}}{2t}  + \frac{x^ix^j}{4t^2}\right)\rho.
\end{equation*}
Then we have
\begin{align*}
	\frac{d\rho}{d t} &= \frac{\partial\rho}{\partial t} + \langle D\rho, \vec{H} \rangle = \frac{\partial \rho}{\partial t} - \left\langle \frac{x^\perp}{2t}, \vec{H} \right\rangle\rho \\
	&= \frac{\partial\rho}{\partial t} - \left|\vec{H} - \frac{x^\perp}{2t}\right|^2\rho + H^2\rho + \frac{|x^\perp|^2}{4t^2}\rho.
\end{align*}
To compute the Laplacian term, we once again fix a point in $L$ and take normal coordinates at that point, with $\{\partial_1,\dots,\partial_n\}$ denoting the coordinate tangent vectors. Then
\begin{align*}
	\nabla_i \rho &= \frac{\langle x, \partial_i\rangle}{2t}\rho \\
	\nabla_j\nabla_i \rho &= \left(\frac{\delta_{ij}}{2t} + \frac{\langle x, \partial_i\rangle\langle x, \partial_j\rangle}{4t^2}\right)\rho 
\end{align*}
So we find that
\begin{equation*}
	\Delta\rho = \left(\frac{n}{2t} + \frac{|x^T|^2}{4t^2}\right)\rho = -\frac{\partial\rho}{\partial t} - \frac{|x^\perp|^2}{4t^2},
\end{equation*}
combining this with the previous calculation yields (iii).
\end{proof}
\begin{remark}
From the above evolution equations we see that local exactness is preserved by the flow, indeed
\begin{align*}
	\frac{d\lambda_t}{d t} = \mathcal{L}_{\vec{H}}\lambda_t &= d(\vec{H}\lrcorner\lambda_t) + \vec{H}\lrcorner d\lambda_T \\
	&= d(\vec{H}\lrcorner \lambda_t) + J\nabla\theta_t\lrcorner 2\omega \\
	&= d(\vec{H}\lrcorner\lambda_t) - 2d\theta_t.
\end{align*}
so by the fundamental theorem of calculus we have
\begin{equation*}
	\lambda_t = \lambda_0 + \int_0^t \frac{d\lambda_s}{ds}ds
\end{equation*}
where the right hand side is exact if $\lambda_0$ is.
\end{remark}
Let $\phi$ be a cut-off function supported on $B_3$ with $0\leq \phi \leq 1$, $\phi \equiv 1$ on $B_2$ and the estimates $|D\phi| \leq 2$ and $|D^2\phi| \leq C$. We then have the following lemma.
\begin{lemma}\label{lem:localmon}
Suppose that $(L_t)$ are exact in $B_3$ and define $\alpha_t := \beta_t + 2t\theta_t$. Then
\begin{equation*}
	\frac{d}{dt}\int_{L_t} \phi\alpha_t^2\rho d\mu \leq -\int_{L_t}\phi|2t\vec{H} - x^\perp|^2\rho d\mu + C\int_{L_t\cap(B_3\setminus B_2)} \alpha_t^2\rho d\mu.
\end{equation*}
where $C = C(\phi)$.
\end{lemma}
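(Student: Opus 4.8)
The overall strategy is to apply a localized, Huisken-type weighted monotonicity computation to $g = \phi\alpha_t^2$ against the kernel $\rho = \rho_{(0,0)}$, and the whole argument hinges on two identities for $\alpha_t$. First, $\alpha_t$ solves the heat equation along the flow: differentiating $\alpha_t = \beta_t + 2t\theta_t$ materially and inserting Lemma~\ref{lem:eveq}(i)--(ii),
\begin{equation*}
\frac{d\alpha_t}{dt} = \bigl(\Delta\beta_t - 2\theta_t\bigr) + 2\theta_t + 2t\,\Delta\theta_t = \Delta\bigl(\beta_t + 2t\theta_t\bigr) = \Delta\alpha_t,
\end{equation*}
so in particular $\tfrac{d}{dt}(\alpha_t^2) - \Delta(\alpha_t^2) = -2|\nabla\alpha_t|^2$. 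Second, I would identify $\nabla\alpha_t$: from the proof of Lemma~\ref{lem:eveq}(ii) one has $\nabla\beta_t = (Jx)^T = Jx^\perp$ (using that $J$ interchanges $T_xL_t$ and $N_xL_t$), and $\vec{H} = J\nabla\theta_t$; applying the isometry $J$ yields $J\nabla\alpha_t = J(Jx^\perp) + 2t\,J\nabla\theta_t = -x^\perp + 2t\vec{H}$, hence $|\nabla\alpha_t|^2 = |2t\vec{H} - x^\perp|^2$, which is exactly the integrand on the right of the claimed inequality.

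Next I would set up the weighted monotonicity identity. Combining Lemma~\ref{lem:eveq}(iii) with $x_0 = 0$, $t_0 = 0$ (so that $\tfrac{(x_0-x)^\perp}{2(t_0-t)} = \tfrac{x^\perp}{2t}$), the area evolution $\tfrac{d}{dt}\,d\mu = -H^2\,d\mu$, and two integrations by parts (legitimate since the integrands are compactly supported in $B_3$ and $L_t$ has no boundary there), one gets for any $g = g_t$ on $L_t$ supported in $B_3$
\begin{equation*}
\frac{d}{dt}\int_{L_t} g\,\rho\,d\mu = \int_{L_t}\Bigl(\frac{dg}{dt} - \Delta g\Bigr)\rho\,d\mu - \int_{L_t} g\,\Bigl|\vec{H} - \frac{x^\perp}{2t}\Bigr|^2\rho\,d\mu.
\end{equation*}
Taking $g = \phi\alpha_t^2 \ge 0$ and using $\tfrac{d\phi}{dt} = \langle D\phi,\vec{H}\rangle$ together with $\Delta_{L_t}\phi = \mathrm{tr}_{L_t}(D^2\phi) + \langle D\phi,\vec{H}\rangle$, the two $\langle D\phi,\vec{H}\rangle$ contributions cancel, and with the heat equation for $\alpha_t^2$ one obtains
\begin{equation*}
\frac{d(\phi\alpha_t^2)}{dt} - \Delta(\phi\alpha_t^2) = -\mathrm{tr}_{L_t}(D^2\phi)\,\alpha_t^2 - 4\alpha_t\langle\nabla\phi,\nabla\alpha_t\rangle - 2\phi|\nabla\alpha_t|^2.
\end{equation*}

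Finally I would estimate. Since $g = \phi\alpha_t^2 \ge 0$, the last term of the monotonicity identity is nonpositive and may be discarded. For the cross term Young's inequality gives $4|\alpha_t|\,|\nabla\phi|\,|\nabla\alpha_t| \le \phi|\nabla\alpha_t|^2 + 4\phi^{-1}|\nabla\phi|^2\alpha_t^2$, and the standard bound $|D\phi|^2 \le 2\|D^2\phi\|_\infty\,\phi$ for nonnegative functions with bounded Hessian makes $\phi^{-1}|\nabla\phi|^2 \le C(\phi)$. Absorbing one copy of $\phi|\nabla\alpha_t|^2$ into the good term $-2\phi|\nabla\alpha_t|^2$ leaves $-\phi|\nabla\alpha_t|^2 = -\phi|2t\vec{H} - x^\perp|^2$, while the remaining zeroth-order terms $\bigl(-\mathrm{tr}_{L_t}(D^2\phi) + 4\phi^{-1}|\nabla\phi|^2\bigr)\alpha_t^2$ are bounded by $C(\phi)\alpha_t^2$ and supported where $D\phi$ or $D^2\phi$ is nonzero, i.e.\ on $B_3\setminus B_2$. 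Collecting these gives the stated inequality. The only points that need care are the $\langle D\phi,\vec{H}\rangle$ cancellation (which is what makes the cutoff errors zeroth-order in $\alpha_t$, rather than involving the a priori unbounded quantity $\vec{H}$) and the boundedness of $\phi^{-1}|\nabla\phi|^2$; I do not anticipate a serious obstacle, since the real content is the pair of identities for $\alpha_t$ and the remainder is a routine localized energy estimate.
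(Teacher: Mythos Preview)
Your proposal is correct and follows essentially the same route as the paper: both use that $\alpha_t$ solves the heat equation along the flow, identify $|\nabla\alpha_t|^2 = |2t\vec H - x^\perp|^2$, compute $(\tfrac{d}{dt}-\Delta)(\phi\alpha_t^2)$ and absorb the cross term via Young's inequality and the bound $|D\phi|^2/\phi \le C$, then combine with the Huisken-type identity for $\rho$ (dropping the nonpositive $-\phi\alpha_t^2|\vec H - x^\perp/2t|^2\rho$ term) and integrate by parts. The only cosmetic difference is that you package the Huisken identity as a general formula for $\int g\rho\,d\mu$ before specializing to $g=\phi\alpha_t^2$, whereas the paper carries the computation through directly.
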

\begin{proof}
We calculate
\begin{equation*}
	\left(\frac{d}{dt} - \Delta_{L_t}\right)\phi = \frac{\partial\phi}{\partial t} - \div_{L_t} D\phi = -\Delta_{\mathbb{R}^{n+k}}\phi + \mathrm{tr}_{(TL)^\perp} D^2 \phi \leq C\mathbbm{1}_{B_3\setminus B_2}
\end{equation*}
Then
\begin{align*}
	\left(\frac{d}{dt} - \Delta\right) (\phi \alpha_t^2) &= \phi\left(\frac{d}{dt} - \Delta\right)\alpha_t^2 + \alpha_t^2\left(\frac{d}{dt} - \Delta\right)\phi - 2\langle \nabla\phi, \nabla\alpha_t^2\rangle \\
	&\leq 2\phi\alpha_t\left(\frac{d}{dt} - \Delta\right)\alpha_t - 2\phi|\nabla\alpha_t|^2 + C\alpha_t^2\mathbbm{1}_{B_3\setminus B_2} - 4\alpha_t \langle\nabla\phi, \nabla\alpha_t\rangle.
\end{align*}
Using Young's inequality we estimate the last term as follows
\begin{equation*}
	-4\alpha_t\langle \nabla\phi, \nabla\alpha_t\rangle \leq 4|D\phi||\alpha_t||\nabla\alpha_t| \leq \phi|\nabla\alpha_t|^2 + \frac{4|D\phi|^2}{\phi}\alpha_t^2 \leq \phi|\nabla\alpha_t|^2 + C\alpha_t^2\mathbbm{1}_{B_3\setminus B_2}
\end{equation*}
where we used that
\begin{equation*}
	\frac{|D\phi|^2}{\phi} \leq 2\max |D^2\phi| \leq C
\end{equation*}
which is true of any compactly supported smooth (or even $C^2$) function. Thus we arrive at
\begin{equation*}
	\left(\frac{d}{dt} - \Delta\right) \phi (\alpha_t)^2 \leq -\phi|\nabla\alpha_t|^2 + C\alpha_t^2\mathbbm{1}_{B_3\setminus B_2}.
\end{equation*}
We now just differentiate under the integral to get
\begin{align*}
	\frac{d}{dt}\int_{L_t} \phi\alpha_t^2\rho d\mu &\leq \int_{L_t} \phi\alpha_t^2\left(-\Delta\rho - \left|\vec{H} - \frac{x^\perp}{2t}\right|\rho + H^2\rho\right) - \rho\phi|\nabla\alpha_t|^2d\mu  \\
	&\quad\quad + \int_{L_t} \rho\Delta \phi\alpha_t^2 - \phi\alpha_t^2\rho H^2d\mu + C\int_{L_t\cap (B_3\setminus B_2)}\alpha_t^2\rho d\mu \\
	&\leq \int_{L_t} \rho\Delta\phi\alpha_t^2 - \phi\alpha_t^2\Delta\rho d\mu - \int_{L_t}\phi|\nabla \alpha_t|^2\rho d\mu + C\int_{L_t\cap(B_3\setminus B_2)} \alpha_t^2\rho d\mu.
\end{align*}
The first integral is zero by Green's identities, so we are left with precisely the desired inequality since $\nabla\alpha_t = \nabla\beta_t + 2t\nabla\theta_t = Jx^\perp - 2tJ\vec{H}$.
\end{proof}

\section{Stability of Self-Expanders}\label{sec:stability}
In this section we prove a dynamic stability result for Lagrangian self-expanders. More specifically we show that if a Lagrangian submanifold is asymptotic to some pair of planes and is almost a self-expander in a weak sense, then the submanifold is actually close in a stronger topology to some self-expander. Let $P_1$, $P_2\subset\mathbb{C}^n$ be Lagrangian planes intersecting transversally such that neither $P_1 + P_2$ or $P_1 - P_2$ are area minimising. We denote by $P := P_1 + P_2$. We will need the following uniqueness result, proved by Lotay-Neves \cite{LotNe13} in dimension $2$ and Imagi-Joyce-Oliveira dos Santos \cite{ImJoOl14} in dimensions $3$ and higher.
\begin{theorem} \label{thm:seuniqueness}
There exists a unique smooth, zero-Maslov class Lagrangian self-expander asymptotic to $P$.
\end{theorem}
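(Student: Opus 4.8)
\emph{Proof strategy.} Let $L$ and $L'$ be two smooth, zero-Maslov Lagrangian self-expanders asymptotic to $P = P_1 + P_2$; the plan is to prove $L = L'$. First I would record the structural consequences of the hypotheses. Since $L$ is asymptotic to the transverse pair $P$, outside a large ball it decomposes into two ends, the $i$-th of which is a graph over $P_i\setminus B_R$ whose graph function, and all of whose derivatives, decay to zero; hence $L$ is exact, $\lambda|_L = d\beta_L$, and carries a single-valued bounded Lagrangian angle $\theta_L$ converging to the constant angles $\theta_1,\theta_2$ of $P_1,P_2$ on the two ends. Moreover, viewing $L$ as the $t=\tfrac{1}{2}$ slice of the mean curvature flow $L_t = \sqrt{2t}\,L$, a direct computation gives $2t\vec H - x^\perp \equiv 0$ on every slice, so by the identity $\nabla\alpha_t = Jx^\perp - 2tJ\vec H$ from the proof of Lemma~\ref{lem:localmon} the potential $\alpha_t = \beta_t + 2t\theta_t$ is constant on $L_t$; in particular $\beta_L + \theta_L$ is constant on $L$. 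So a Lagrangian self-expander asymptotic to $P$ is governed by a single scalar potential, and its normal deformations are controlled by a scalar drift-Laplace Jacobi operator, self-adjoint for the weighted measure in which self-expanders are stationary (equivalently, $L$ is a minimal submanifold of $(\mathbb C^n, e^{|x|^2/n}\,g_{\mathrm{eucl}})$).

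Next I would use the non-area-minimising hypothesis. It is an open condition on the characterising angles $\phi_1,\dots,\phi_n$ of the pair — excluding exactly the values at which $P_1+P_2$, respectively $P_1-P_2$, is calibrated — and it is the regime in which the self-expander is the energetically relevant desingularisation of the transverse intersection. Concretely, applying Huisken's monotonicity formula to $L_t = \sqrt{2t}\,L$, whose varifold limit as $t\searrow0$ is the multiplicity-one pair $P$ of Gaussian density $2$, one sees that the flow carries a fixed, $t$-independent Gaussian density ratio strictly below $2$ whenever $L\neq P$, so there is an entropy gap depending only on $P$. Combined with White's local regularity theorem — applicable because far from the origin the flow stays $\varepsilon_0$-close to the smooth planes $P_i$, so its density ratios there lie below $1+\varepsilon_0$ — this yields the key a priori estimate: there is $R_0=R_0(P)$ so that every zero-Maslov Lagrangian self-expander asymptotic to $P$ is graphical over $P$ with small $C^{1,\alpha}$ norm outside $B_{R_0}$, and the moduli space $\mathcal M_P$ of such expanders has uniformly bounded geometry; in particular the neck cannot run off to infinity.

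The uniqueness itself then splits by dimension. For $n=2$, following the method of Lotay--Neves \cite{LotNe13}, I would argue that the a priori estimate makes $\mathcal M_P$ compact, that the scalar-potential structure reduces the linearisation to a Schrödinger operator whose kernel can be analysed, and that a Simons-type identity together with the decay of Jacobi fields forced by the properness estimate — using the non-minimising condition to control the relevant potential — leaves no nontrivial decaying Jacobi field, so $\mathcal M_P$ is rigid, hence discrete, hence finite; a continuity/degree argument over the connected space of admissible pairs $P$, anchored at an explicit rotationally-symmetric model, then forces $\#\mathcal M_P=1$. For $n\ge3$ the PDE argument no longer closes, and one invokes Imagi--Joyce--Oliveira dos Santos \cite{ImJoOl14}: each self-expander asymptotic to $P$ determines, via an unobstructedness analysis in Lagrangian Floer theory, a ``resolution'' of the transverse intersection point $P_1\cap P_2$, and the Floer-cohomological bookkeeping — the rank contributed by that single intersection point, with the grading fixed by the non-minimising angle condition — admits at most one such resolution, so two distinct expanders are impossible.

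I expect the main obstacle, in either route, to be ruling out a nontrivial connected family of genuinely distinct expanders sharing the asymptotics of $P$: there is no direct maximum-principle comparison, since $L$ and $L'$ may differ over an arbitrarily large compact set while agreeing on the ends. The analytic prerequisite — sharp and uniform asymptotics of expanders towards the cone $P$, enough to integrate by parts, exclude decaying Jacobi fields, and establish properness of $\mathcal M_P$ — is itself delicate, since the approach to the asymptotic cone is only polynomial. And the conceptual crux, for which no purely PDE-theoretic proof is known in general, is the Floer-theoretic rigidity of \cite{ImJoOl14}: making precise that distinct expanders carry distinct Floer data and cannot be deformed into one another through the moduli space. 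That is the step I would expect to be hardest.
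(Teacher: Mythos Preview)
The paper does not prove this theorem at all: it is stated without proof and attributed to Lotay--Neves \cite{LotNe13} in dimension $2$ and to Imagi--Joyce--Oliveira dos Santos \cite{ImJoOl14} in dimensions $3$ and higher. Your proposal is therefore not to be compared against an argument in the paper, but against those external references, and what you have written is a reasonable (if necessarily heuristic) outline of the architecture of those two proofs --- the uniform asymptotics and compactness of the moduli space, the dimension split, the Jacobi-field/continuity argument for $n=2$, and the Floer-theoretic obstruction count for $n\geq 3$.

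One small correction: in the $n=2$ case, the Lotay--Neves argument is not a Simons-type rigidity computation on the Jacobi operator as you sketch it; rather, they deform the asymptotic pair of planes $P$ through a one-parameter family to a degenerate limit (where the two planes coincide), track the corresponding self-expanders using the a priori estimates you mention, and use an energy argument at the limit to conclude. Your description of the $n\geq 3$ case via \cite{ImJoOl14} is broadly accurate in spirit, though the actual mechanism there passes through the derived Fukaya category and Thomas--Yau-type comparison rather than a direct ``rank count'' at the intersection point. None of this affects the role the theorem plays in the present paper, where it is used purely as a black box in the proof of Theorem~\ref{thm:stability}.
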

\begin{theorem} \label{thm:stability}
Fix $R,r,\tau > 0$, $\alpha$,$\varepsilon_0 < 1$, and $C, M < \infty$. Let $\Sigma$ be the unique smooth zero-Maslov Lagrangian self-expander asymptotic to $P$. Then for all $\varepsilon > 0$ there exists $\tilde R \geq R$, $\eta, \nu > 0$ each dependent on $\varepsilon_0$, $\varepsilon$, $r$, $R$, $\tau$, $\alpha$, $C$, $M$ and $P$ such that if $L$ is a smooth Lagrangian submanifold which is zero-Maslov in $B_{\tilde R}$
\begin{itemize}
\item[(i)] $|A|\leq M$  on  $L\cap B_{\tilde R}$
\item[(ii)] $\int_L \rho_{(x,0)}(y,-r^2)d\mathcal{H}^n \leq 1 + \varepsilon_0$ for all $x$ and $0 < r \leq \tau$,
\item[(iii)] $\int_{L\cap B_{\tilde R}} |\vec{H} - x^\perp|^2d\mathcal{H}^n \leq \eta$,
\item[(iv)] The connected components of $L\cap A(r,\tilde R)$ are in one to one correspondence with the connected components of $P \cap A(r, \tilde R)$ and
\begin{equation*}
	\mathrm{dist}(x, P) \leq \nu + C\mathrm{exp}\left(\frac{-|x|^2}{C}\right),
\end{equation*}
for all $x \in L\cap A(r, \tilde R)$;
\end{itemize}
then $L$ is $\varepsilon$ close to $\Sigma$ in $C^{1,\alpha}(\overline B_{\tilde R})$.
\end{theorem}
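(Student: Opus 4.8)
\emph{Proof strategy.} The plan is to argue by contradiction, combining a compactness argument along the lines of Ilmanen--Neves--Schulze with the uniqueness statement of Theorem~\ref{thm:seuniqueness}. Fix $\varepsilon>0$ and suppose, for contradiction, that no choice of $\tilde{R}\ge R$ and $\eta,\nu>0$ works. Then we may pick sequences $\tilde{R}_j\to\infty$, $\eta_j\downarrow 0$, $\nu_j\downarrow 0$ together with smooth Lagrangians $L_j$, zero-Maslov in $B_{\tilde{R}_j}$, satisfying (i)--(iv) with parameters $\tilde{R}_j,\eta_j,\nu_j$ but not $\varepsilon$-close to $\Sigma$ in $C^{1,\alpha}(\overline{B}_{\tilde{R}_j})$. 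The bound $|A|\le M$ from (i), together with the local area bounds implied by (ii) (evaluating the density ratio at scale $r=\tau$ bounds $\mathcal{H}^n(L_j\cap B_\rho)$ for each fixed $\rho$), lets us pass to a subsequence with $L_j\to L_\infty$ in $C^{1,\beta}_{loc}(\mathbb{R}^{2n})$ for every $\beta<1$, where $L_\infty$ is a properly embedded $C^{1,1}$ Lagrangian. The component correspondence in (iv), which is stable under this convergence since the two families of sheets stay a fixed distance apart away from the origin, guarantees that $L_\infty$ is non-empty and has the same end structure as $P$, so no mass escapes to infinity.

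Next I would identify the limit, showing that $L_\infty$ is a smooth, zero-Maslov Lagrangian self-expander asymptotic to $P$. The self-expander equation follows from (iii): for fixed $\rho$ and $j$ large one has $B_\rho\subset B_{\tilde{R}_j}$ and $\int_{L_j\cap B_\rho}|\vec{H}-x^\perp|^2\le\eta_j\to 0$; since the second fundamental forms are uniformly bounded, $\vec{H}_{L_j}\rightharpoonup\vec{H}_{L_\infty}$ weakly while the area measures converge, so lower semicontinuity forces $\vec{H}_{L_\infty}=x^\perp$ on $B_\rho$, hence everywhere on $L_\infty$. A $C^{1,1}$ weak solution of this quasilinear elliptic system is smooth by Schauder bootstrapping. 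The zero-Maslov condition passes to the limit because the Lagrangian angles, uniformly Lipschitz by the curvature bound, converge after a normalisation to a single-valued lift for $L_\infty$. Finally, letting $\nu_j\to 0$ in (iv) gives $\mathrm{dist}(x,P)\le C\exp(-|x|^2/C)$ for $x\in L_\infty\setminus B_r$, and the component correspondence survives, so $L_\infty$ is asymptotic to $P$ in the sense required by Theorem~\ref{thm:seuniqueness}; that theorem then forces $L_\infty=\Sigma$.

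It then remains to promote the local convergence $L_j\to\Sigma$ to genuine $\varepsilon$-closeness on the growing ball $\overline{B}_{\tilde{R}_j}$, which I would do by treating an inner and an outer region separately. Choose $\rho_0$ large, depending only on $\varepsilon$, $\Sigma$ and $P$, so that on $\Sigma\setminus B_{\rho_0}$ the expander is graphical over $P_1$ and $P_2$ with arbitrarily small $C^{1,\alpha}$ norm, which is possible since $\Sigma$ is asymptotic to $P$ and has bounded second fundamental form there. On $\overline{B}_{\rho_0}$ the local $C^{1,\beta}$ convergence gives $\varepsilon$-closeness of $L_j$ to $\Sigma$ for $j$ large. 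On the annulus $A(\rho_0,\tilde{R}_j)$, hypothesis (iv) gives $\mathrm{dist}(x,P)\le\nu_j+C\exp(-\rho_0^2/C)$, small once $\rho_0$ and $j$ are large, while (i) gives $|A_{L_j}|\le M$; a standard graph-representation and interpolation estimate (bounded second fundamental form plus small height over a plane forces smallness of the graph function in $C^{1,\alpha}$ after passing to a slightly smaller scale) shows $L_j$ is $\varepsilon$-close to $P$, hence to $\Sigma$, on that annulus. Combining the two regions contradicts the failure of the conclusion for $L_j$, completing the proof.

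I expect the last step to be the main obstacle: controlling $L_j$ quantitatively all the way out to the scale $\tilde{R}_j$, and in particular upgrading the $C^0$-type information in (iv) to a $C^{1,\alpha}$ estimate using the curvature bound (i) and the asymptotic behaviour of $\Sigma$. The compactness argument and the passage of the self-expander equation and of the asymptotic condition to the limit are comparatively routine, but they rely essentially on (ii) and (iv) to keep the limit from degenerating or losing mass.
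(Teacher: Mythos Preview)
Your proposal is correct and follows essentially the same approach as the paper: a contradiction argument in which the curvature bound (i) and density bound (ii) give compactness, (iii) forces the limit to be a smooth self-expander, (iv) gives the asymptotics, and Theorem~\ref{thm:seuniqueness} identifies the limit as $\Sigma$; the outer region is handled by interpolating the $C^0$ bound from (iv) against the curvature bound (i) to get $C^{1,\alpha}$-closeness to $P$, hence to $\Sigma$. The only difference is the order: the paper first uses the interpolation argument to reduce the failure of $\varepsilon$-closeness to a fixed compact ball $B_\rho$, and only then runs the compactness argument on that ball, whereas you run compactness first and deal with the annulus afterwards---but the ingredients and logic are identical.
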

\begin{proof}
Seeking a contradiction, suppose that the result were not true. Then there would exist sequences $\nu_i \searrow 0$, $\eta_i \searrow 0$, $R_i \rightarrow \infty$ and $L_i$ such that each $L_i$ is a smooth Lagrangian submanifold of $\mathbb{C}^n$ that is zero-Maslov in $B_{R_i}$, satisfying
\begin{itemize}
\item[(1)] $|A^{L_i}| \leq M$ on $L_i\cap B_{R_i}$,
\item[(2)] $\int_{L_i} \rho_{(x,0)}(y,-r^2)d\mathcal{H}^n \leq 1 + \varepsilon_0$ for all $x$ and $0 < r \leq \tau$,
\item[(3)] $\int_{L_i\cap B_{R_i}} |\vec{H} - x^\perp|^2 d\mathcal{H}^n \leq \eta_i$
\item[(4)] The connected components of $L_i\cap A(r,R_i)$ are in one to one correspondence with the connected components of $P\cap A(r,R_i)$ and
\begin{equation*}
	\mathrm{dist}(x,P) \leq \nu_i + C\exp\left(\frac{-|x|^2}{C}\right)
\end{equation*}
for all $x \in L_i\cap A(r,R_i)$,
\item[(5)] $L_i$ is not $\varepsilon$-close to $\Sigma$ in $C^{1,\alpha}(B_{R_i})$.
\end{itemize}
By virtue of (1), (4), and a suitable interpolation inequality, it follows that for some $\rho > 0$, outside of $B_\rho$, $L_i$ and $\Sigma$ are both $\varepsilon/4$-close to $P$ in $C^{1,\alpha}$. Hence, in order that (5) is satisfied, we conclude that for large $i$, $L_i$ is not $\varepsilon$-close to $\Sigma$ in $C^{1,\alpha}(B_\rho)$. \\
On the other hand, by (1) and (2) we may extract a subsequence of $L_i$ that converges in $C^{1,\alpha}_{loc}$ for all $\alpha < 1$ to some limit $L_\infty$, a $C^{1,1}$ zero-Maslov Lagrangian submanifold. The estimate (2) passes to the limit and tells us that $L_\infty$ has unit multiplicity everywhere, and bounded area ratios. Since $L_\infty$ is $C^{1,1}$ we can define mean curvature in a weak sense, and (3) implies
\begin{equation*}
	\int_{L_\infty} |\vec{H} - x^\perp|^2 d\mathcal{H}^n = 0.
\end{equation*}
By standard Schauder theory for elliptic PDE, this immediately implies that $L_\infty$ is in fact smooth and satisfies the expander equation in the classical sense. Consequently $L_\infty$ is a smooth, zero-Maslov class Lagrangian submanifold, and (4) implies that $L_\infty$ is asymptotic to $P$. Theorem \ref{thm:seuniqueness} then implies that $L_\infty = \Sigma$, which contradicts (5).
\end{proof}

\section{Main Theorem}\label{sec:main}
Suppose, as in the previous section, that $P := P_1 + P_2$ is a pair of transversely intersecting Lagrangian planes such that neither $P_1 + P_2$ nor $P_1 - P_2$ are minimising, and that $\Sigma$ is a Lagrangian self-expander asymptotic to $P$. We assume the existence of a family $(L^s)_{0<s\leq c}$ of compact Lagrangians, each exact and zero-Maslov in $B_4$ satisfying the following properties. The existence of such a family will be established in section \ref{sec:approx}
\begin{itemize}
\item[(H1)] The area ratios are uniformly bounded, i.e. there exists a constant $D_1$ such that
\begin{equation*}
	\mathcal{H}^n(L^s\cap B_r(x)) \leq D_1 r^n \quad \quad \forall r > 0, \: \forall s \in (0,c], \: \forall x.
\end{equation*}
\item[(H2)] There is a constant $D_2$ such that for every $s$ and $x \in L^s\cap B_4$
\begin{equation*}
	|\theta^s(x)| + |\beta^s(x)| \leq D_2(|x|^2 + 1).
\end{equation*}
where $\theta^s$ and $\beta^s$ are, respectively, the Lagrangian angle of $L^s$ and a primitive for the Liouville form on $L^s$.
\item[(H3)] The rescaled manifolds $\tilde{L}^s := (2s)^{-1/2}L^s$ converge in $C^{1,\alpha}_{loc}$ to $\Sigma$. Moreover the second fundamental form of $\tilde L^s$ is bounded uniformly in $s$ and without loss of generality we can assume that
\begin{equation*}
	\lim_{s\rightarrow 0} (\tilde{\theta}^s + \tilde{\beta}^s) = 0
\end{equation*}
locally on $\tilde L^s$. (Note that $\tilde L^s$ is exact in the ball $B_{4(2s)^{-1/2}}$ so we can make sense of $\tilde\beta^s$ in the limit.)
\item[(H4)] The connected components of $P \cap A(r_0\sqrt{s}, 4)$ are in one to one correspondence with the connected components of $L^s\cap A(r_0\sqrt{s}, 4)$, and each component can be parametrised as a graph over the corresponding plane $P_i$
\begin{equation*}
	L^s \cap A(r_0\sqrt{s},3) \subset \{x + u_s(x) | x \in P\cap A(r_0\sqrt{s}, 3)\} \subset L^s\cap A(r_0\sqrt{s}, 4) 
\end{equation*}
where the function $u_s:P\cap A(r_0\sqrt{s}, 3) \rightarrow P^\perp$ is normal to $P$ and satisfies the estimate
\begin{equation*}
	|u_s(x)| + |x|\left|\nb u_s(x)\right| + |x|^2|\nb^2 u_s(x)| \leq D_3\left(|x|^2 + \sqrt{2s}e^{-b|x|^2/2s}\right).
\end{equation*}
where $\nb$ denotes the covariant derivative on $P$.
\end{itemize}
We will denote by $(L^s_t)_{t\in[0,T_s)}$ a smooth solution of Lagrangian mean curvature flow with initial condition $L^s$. For $x_0 \in \mathbb{R}^{2n}$ and $t > 0$ we define
\begin{equation*}
	\Phi(x_0,t)(x) := \rho_{(x_0,0)}(x, -t) = \frac{1}{(4\pi t)^{n/2}}\mathrm{exp}\left(-\frac{|x-x_0|^2}{4t}\right)
\end{equation*}
We introduce a slightly modified notion of the Gaussian density ratios, which we will continue to refer to as the Gaussian density ratios, of $L^s_t$ at $x_0$, denoted $\Theta_t^s(x_0,r)$ and defined as 
\begin{equation}
	\Theta_t^s(x_0,r) := \int_{L^s_t} \Phi(x_0, r^2)d\mathcal{H}^n = \int_{L^s_t} \frac{1}{(4\pi r^2)^{n/2}}e^{-|x - x_0|^2/4r^2}d\mathcal{H}^n(x), \label{eq:densityratios}
\end{equation}
defined for $t < T_s$. The monotonicity formula of Huisken tells us that
\begin{equation*}
	\Theta_t(x_0,r) = \Theta(x_0,t+r^2,r) \leq \Theta(x_0,t+r^2,\rho) = \int_{L^s_{t + r^2 - \rho^2}} \Phi(x_0,t+r^2)d\mathcal{H}^n
\end{equation*}
for all $\rho \geq r$. In particular choosing $\rho^2 = t+r^2$ we have
\begin{equation*}
	\Theta_t(x_0,r) \leq \int_{L^s}\Phi(x_0,t+r^2)d\mathcal{H}^n.
\end{equation*}
We also define
\begin{equation*}
	\tilde L^s_t = \frac{L^s_t}{\sqrt{2(s+t)}}.
\end{equation*}
We will denote by $\tilde \Theta^s_t(x_0,r)$ the Gaussian density ratios of $(\tilde L^s_t)$, that is
\begin{equation*}
	\tilde\Theta^s_t(x_0,r) := \int_{\tilde L^s_t}\Phi(x_0,r)d\mathcal{H}^n.
\end{equation*}
One of the primary reasons for modifying the Gaussian density ratios is that our new ratios behave well under the above rescaling. Indeed we can calculate
\begin{equation*}
	\Theta^s_t(x_0,r) = \tilde\Theta^s_t\left(\frac{x_0}{\sqrt{2(s+t)}}, \frac{r}{\sqrt{2(s+t)}}\right).
\end{equation*}
The primary goal of this section is now to prove the following result. 

\begin{theorem}\label{thm:mainthm}
Let $\varepsilon_0 > 0$. There are $s_0$, $\delta_0$ and $\tau$ depending on $\alpha < 1$, $D_1, D_2, D_3$, $\Sigma$, $r_0$ and $\varepsilon$ such that if
\begin{equation*}
	t \leq \delta_0, \: r^2 \leq \tau t \:\: \text{and} \:\: s\leq s_0
\end{equation*}
then
\begin{equation*}
	\Theta_t^s(x_0,r) \leq 1 + \varepsilon_0
\end{equation*}
for every $x_0 \in B_1$.
\end{theorem}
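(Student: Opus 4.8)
The proof is carried out by a continuity argument in $t$, performed separately for each $s\le s_0$ with all constants uniform in $s$, and rests on combining four ingredients: the parabolic rescaling $\tilde L^s_t=L^s_t/\sqrt{2(s+t)}$, under which self-expanders become stationary; Huisken's monotonicity formula; the local monotonicity formula of Lemma~\ref{lem:localmon}; and the stability theorem, Theorem~\ref{thm:stability}, together with White's local regularity theorem.

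\emph{Step 1: rescaling and the region away from the singularity.} Using the identity $\Theta^s_t(x_0,r)=\tilde\Theta^s_t\bigl(x_0/\sqrt{2(s+t)},\,r/\sqrt{2(s+t)}\bigr)$ and the hypothesis $r^2\le\tau t\le\tfrac{\tau}{2}\cdot 2(s+t)$, the rescaled scale is at most $\sqrt{\tau/2}$, so it suffices to bound by $1+\varepsilon_0$ every Gaussian density ratio of $\tilde L^s_t$ at any centre and any scale $\le\sqrt{\tau/2}$. Since $\Sigma$ is smooth with bounded second fundamental form and its two ends separate at infinity, there is $r_1=r_1(\Sigma,\varepsilon_0)>0$ such that every Gaussian density ratio of $\Sigma$ at scale $\le r_1$ is at most $1+\varepsilon_0/2$; I fix $\tau$ so that $\sqrt{\tau/2}\le r_1$. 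For centres $x_0$ outside a fixed neighbourhood of the origin, hypotheses (H1), (H2) and (H4) give $|A^{L^s}|\le C(D_3)$ there uniformly in $s$, while the sheets of $L^s$ are separated by a definite amount; persistence of bounded curvature under the flow (via the Ecker--Huisken-type estimates of the appendix, using that $L^s$ is $C^2$-close to the stationary cone $P$ in the annular region) keeps $|A^{L^s_t}|\le C$ and the sheets separated for $t\le\tau_1$, so that the density ratios at scales $r^2\le\tau\delta_0$ are at most $1+C'\tau\delta_0\le1+\varepsilon_0$ once $\tau\delta_0$ is small. It therefore remains to treat centres near the origin, which after rescaling means controlling $\tilde L^s_t$ on a large fixed ball $B_{\tilde R}$.

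\emph{Step 2: the rescaled flow stays $C^{1,\alpha}$-close to $\Sigma$.} Fix $\varepsilon>0$ small (in terms of $\varepsilon_0$ and $r_1$), let $\tilde R,\eta,\nu$ be the constants produced by Theorem~\ref{thm:stability} with its scale parameter taken equal to $r_1$, and for each $s\le s_0$ let $T_s$ be the supremum of $t\in[0,\delta_0]$ such that $\tilde L^s_\tau$ is $\varepsilon$-close to $\Sigma$ in $C^{1,\alpha}(\overline B_{\tilde R})$ for all $\tau\le t$. By (H3) together with standard short-time regularity ($L^s_t$ is $C^\infty$-close to $L^s$, hence near the origin to $\sqrt{2s}\,\Sigma$, for $t$ small) one has $T_s>0$; the goal is $T_s=\delta_0$. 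Assume, for a contradiction, $T_s<\delta_0$. On $[0,T_s]$ the $\varepsilon$-closeness, combined with Step~1, gives $\Theta^s_\tau(x_0,r)\le1+\varepsilon_0$ for all admissible $(x_0,r,\tau)$, and White's local regularity theorem applied to the original flow $L^s$ — at scale $\sqrt t$ when $t\gtrsim s$, and directly from the bounded curvature of $L^s$ when $t\lesssim s$ — yields $|A^{\tilde L^s_t}|\le M$ on $\tilde L^s_t\cap B_{\tilde R}$, uniformly in $s$ and in $t\le T_s$. This establishes hypotheses (i) and (ii) of Theorem~\ref{thm:stability} for $\tilde L^s_{T_s}$; hypothesis (iv) follows from (H4) propagated through the flow in the annular region, where $L^s_t$ stays $C^{1,\alpha}$-close to $P$; and hypothesis (iii), $\int_{\tilde L^s_{T_s}\cap B_{\tilde R}}|\vec H-x^\perp|^2\le\eta$, is precisely where Lemma~\ref{lem:localmon} enters: the relevant primitive $\beta_t+2(s+t)\theta_t$ satisfies the heat equation by Lemma~\ref{lem:eveq}, is $o(1)$ at $t=0$ after rescaling and normalising as in (H3), and, by the corresponding variant of the local monotonicity formula, has weighted $L^2$-norm at later times bounded by its initial value plus an error supported in an annulus where $L^s_t\approx P$ — on which it is controlled using the constancy of $\beta$ and $\theta$ on Lagrangian planes, together with (H2) and (H4); combined with the curvature bound, the resulting smallness of $\int|2(s+t)\vec H-x^\perp|^2\rho$ rescales to the pointwise-in-$t$ bound $\eta$. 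Theorem~\ref{thm:stability} then shows $\tilde L^s_{T_s}$ is in fact $\tfrac{\varepsilon}{2}$-close to $\Sigma$ in $C^{1,\alpha}(\overline B_{\tilde R})$, and by continuity of the smooth flow this persists slightly beyond $T_s$, contradicting maximality. Hence $T_s=\delta_0$ for all $s\le s_0$.

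\emph{Step 3: conclusion, and the main obstacle.} With $T_s=\delta_0$, the Gaussian density ratios of $\tilde L^s_t$ at scales $\le\sqrt{\tau/2}\le r_1$ and centres near the origin are within $\varepsilon_0/2$ of those of $\Sigma$, hence at most $1+\varepsilon_0$; together with Step~1 and the rescaling identity this gives $\Theta^s_t(x_0,r)\le1+\varepsilon_0$ for all $x_0\in B_1$, $t\le\delta_0$, $r^2\le\tau t$ and $s\le s_0$. The main obstacle is the circularity inherent in Step~2: curvature control, closeness of the density ratios to $1$, and $C^{1,\alpha}$-closeness to $\Sigma$ each presuppose the others, and the only thing that breaks the loop is that the local monotonicity formula propagates the deviation of $L^s_t$ from the self-expander forward in time with a loss controlled purely by the plane asymptotics (H4). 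Making this loss genuinely small and non-accumulating is delicate because the rescaled time interval is $\tfrac12\log\!\bigl(1+\delta_0/s\bigr)$, which is unbounded as $s\to0$; no use may therefore be made of exponential decay, or of any stability of $\Sigma$ as a fixed point of the rescaled flow — the uniform control must come from the monotonicity estimate alone.
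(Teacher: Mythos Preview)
Your overall architecture — continuity argument, rescaling, stability theorem, White's regularity, local monotonicity — matches the paper's, but there is a genuine gap in Step~2 that prevents the argument from closing.

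The problem is your verification of hypothesis~(iii) of Theorem~\ref{thm:stability} at the \emph{specific} time $T_s$. The local monotonicity formula of Lemma~\ref{lem:localmon} gives
\[
\int_{t_1}^{t_2}\int_{L^s_t}\phi\,|2(s+t)\vec H - x^\perp|^2\rho\,d\mu\,dt
\;\le\; \int_{L^s_{t_1}}\phi\alpha_{t_1}^2\rho\,d\mu \;+\; C\int_{t_1}^{t_2}\int_{L^s_t\cap(B_3\setminus B_2)}\alpha_t^2\rho\,d\mu\,dt,
\]
which is a bound on the \emph{time-integrated} expander defect. Smallness of $\int\phi\alpha_t^2\rho$ for all $t$ (which is what propagates forward) does not by itself control $\int\phi|\nabla\alpha_t|^2\rho=\int\phi|2(s+t)\vec H-x^\perp|^2\rho$ at any fixed time; the relation between $\alpha_t$ and $\nabla\alpha_t$ goes the wrong way. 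Your sentence ``combined with the curvature bound, the resulting smallness of $\int|2(s+t)\vec H-x^\perp|^2\rho$ rescales to the pointwise-in-$t$ bound $\eta$'' is exactly the step that does not follow.

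The paper handles this by changing the continuity criterion and inserting an averaging step. It defines $T_s$ as the supremum of times for which the \emph{density ratio} bound holds (not $C^{1,\alpha}$-closeness). Assuming $T_s<\delta_0$, it takes $T=T_s/a$, uses White on $[T,T_s)$ to get a uniform rescaled curvature bound $|\tilde A^s_t|\le\tilde C$, and proves (Lemma~\ref{lem:mon}) that the \emph{time average}
\[
\frac{1}{(a-1)T}\int_T^{aT}\int_{\tilde L^s_t\cap B_{\tilde R}}|\vec H-x^\perp|^2\,d\mathcal H^n\,dt\le\eta.
\]
From this one selects a single good time $t_1\in[T,T_s)$ at which (iii) holds, applies the stability theorem there to get $C^{1,\alpha}$-closeness of $\tilde L^s_{t_1}$ to $\Sigma$, and then uses Lemma~\ref{lem:c1alphaclose} to push the density-ratio bound a definite amount \emph{past} $T_s$, yielding the contradiction. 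With your continuity criterion based on $C^{1,\alpha}$-closeness, even after selecting a good $t_1$ you would still need to propagate $C^{1,\alpha}$-closeness (not just density ratios) beyond $T_s$, and the stability theorem gives no quantitative improvement ($\varepsilon$ in, $\varepsilon$ out) that would let you do this by continuity. The switch to density ratios as the open condition, together with the time-averaging/selection step, is what actually breaks the circularity you identify in Step~3.
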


We start by proving estimates like the one in the above theorem hold for a short time or far from the origin.

\begin{lemma}[Far from the origin estimate] \label{lem:fforigin}
Let $\varepsilon_0 > 0$. There are $\delta_1 > 0$, $K_0 < \infty$ such that if $r^2 \leq t \leq \delta_1$, then
\begin{equation*}
	\Theta^s_t(x_0, r) \leq 1 + \varepsilon_0
\end{equation*}
for all $x_0 \in A(K_0\sqrt{2t}, 1)$.
\end{lemma}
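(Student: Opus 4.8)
The plan is to use Huisken's monotonicity formula to reduce the statement to one about the fixed initial datum $L^s$, and then to read off the conclusion from the structure hypotheses (H1)--(H4). By the consequence of the monotonicity formula recorded just before the statement, $\Theta^s_t(x_0,r)\le\int_{L^s}\Phi(x_0,t+r^2)\,d\mathcal{H}^n$, so it is enough to bound the right-hand side. Setting $\sigma:=\sqrt{t+r^2}$ we have $\sigma\le\sqrt{2t}\le\sqrt{2\delta_1}$, while the hypothesis $x_0\in A(K_0\sqrt{2t},1)$ gives $K_0\sigma\le|x_0|\le 1$. The task is therefore to choose $\delta_1$ and $K_0$ so that $\sigma\le\sqrt{2\delta_1}$ and $K_0\sigma\le|x_0|\le 1$ force $\int_{L^s}\Phi(x_0,\sigma^2)\,d\mathcal{H}^n\le 1+\varepsilon_0$.

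First I would split $L^s=(L^s\cap B_\lambda(x_0))\cup(L^s\setminus B_\lambda(x_0))$, where $\lambda$ is a small multiple of $|x_0|$, chosen small enough that $B_\lambda(x_0)\subset B_{3}$ lies in the region where (H4) is available and that $B_\lambda(x_0)$ meets the graph over at most one of the planes $P_1,P_2$; the latter is possible because transversality of $P_1,P_2$ gives $\mathrm{dist}(x,P_1)+\mathrm{dist}(x,P_2)\ge c|x|$, so at most one of the two planes comes within $c|x_0|/2$ of $x_0$, while by (H4) the two graphical sheets of $L^s$ stay close to $P_1,P_2$ on the relevant scale. On the outer piece $|x-x_0|\ge\lambda\gtrsim K_0\sigma$, so a dyadic decomposition into spherical shells together with the uniform area-ratio bound (H1) yields $\int_{L^s\setminus B_\lambda(x_0)}\Phi(x_0,\sigma^2)\,d\mathcal{H}^n\le\Psi(K_0)$ with $\Psi(K_0)\to 0$ as $K_0\to\infty$. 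This step uses nothing beyond (H1), and in particular it already absorbs the high-curvature core of $L^s$ near the origin, which lies at distance $\gtrsim K_0\sigma$ from $x_0$. I would fix $K_0$ large enough that this term is $<\varepsilon_0/2$.

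The real content is the inner piece, for which the target is $\le 1+\varepsilon_0/2$. Here I would invoke the elementary fact that a smooth submanifold with area ratios $\le D_1$ on which $\sigma|A|$ is small (graphing over the tangent plane at the nearest point to $x_0$) has Gaussian density ratio at scale $\sigma$ at most $1$ plus a quantity tending to $0$ with $\sigma|A|$; combined with the single-sheet property of $B_\lambda(x_0)$ from the previous paragraph, it remains to show that $\sigma|A^{L^s}|$ is small on $L^s\cap B_\lambda(x_0)$. I would dichotomise on the size of $s$. If $t\le\delta_0^2 s/M^2$ for a small constant $\delta_0$, then (H3) gives $|A^{L^s}|\le M/\sqrt{2s}$ everywhere, so $\sigma|A^{L^s}|\le\sqrt{2t}\,M/\sqrt{2s}\le\delta_0$ globally, and in fact $\int_{L^s}\Phi(x_0,\sigma^2)\,d\mathcal{H}^n\le 1+\Psi(\delta_0)$ for every $x_0$; one fixes $\delta_0$ small. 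In the complementary range $s<(M/\delta_0)^2 t$, the inequality $|x|\ge K_0\sqrt{2t}/2\ge r_0\sqrt{s}$ holds on $B_\lambda(x_0)$ once $K_0$ is large, so (H4) applies there; feeding $|x|\ge K_0\sqrt{2t}/2$ and $s<(M/\delta_0)^2 t$ into the bounds on $|\nabla u_s|$ and $|\nabla^2 u_s|$ gives $|\nabla u_s|=O(D_3)$ and $|A^{L^s}|\le C(D_3)(1+\mu(K_0)/\sqrt{t})$ on $B_\lambda(x_0)$, where $\mu(K_0)\to 0$ exponentially as $K_0\to\infty$, whence $\sigma|A^{L^s}|\le C(D_3)(\sqrt{2\delta_1}+\sqrt{2}\,\mu(K_0))$, which is small provided $\delta_1$ is small and $K_0$ large.

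Assembling the pieces and choosing the constants in the order $\delta_0$, then $\delta_1$, then $K_0$ (with $K_0$ allowed to depend on $\delta_1$), so as to meet all the ``$\delta_0,\delta_1$ small, $K_0$ large'' requirements, proves the lemma. I expect the main difficulty to lie precisely in this bookkeeping for the inner estimate: $|A^{L^s}|$ blows up like $s^{-1/2}$ on the core of size $\sim\sqrt{s}$ around the origin, and one must check that the constraint $x_0\in A(K_0\sqrt{2t},1)$ always places us in a controllable situation---either $t$ is so small relative to $s$ that the scale $\sigma$ is tiny even compared with the core curvature scale (so (H3) suffices), or $x_0$ is far enough from the core that (H4) yields the needed flatness---while keeping all constants dependent only on the data entering (H1)--(H4) and on the planes $P_1,P_2$.
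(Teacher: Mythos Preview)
Your approach is sound and reaches the same conclusion as the paper, but the organisation of the core estimate is genuinely different. The paper first proves, by contradiction and compactness, that
\[
\int_{\lambda(L^s\cap B_3)}\Phi(y_0,1)\,d\mathcal H^n\le 1+\varepsilon_0/2
\]
for all $|y_0|\ge K_0$, all $\lambda>0$ and all $s$: assuming failure along sequences $y_i,\lambda_i,s_i$ with $|y_i|\to\infty$, one shows $\lambda_i\to\infty$, derives from (H3)--(H4) the pointwise bound $|A|\le C(\lambda_i^{-1}+\sigma_i^{-1/2}e^{-b|x|^2/2\sigma_i})$ on the rescaled $\hat L_i=\lambda_i L^{s_i}$, and concludes $|A|\to 0$ on compact sets around $y_i$, hence convergence to a plane and density $1$. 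The tail outside $B_3$ is then handled by (H1) alone. You instead split at $B_\lambda(x_0)$ with $\lambda\sim|x_0|$, control the outer piece by (H1), and on the inner piece make $\sigma|A^{L^s}|$ small by dichotomising on the ratio $t/s$: (H3) handles $t\lesssim s$, (H4) handles $s\lesssim t$. The compactness route is cleaner in that all the scale-balancing is absorbed into ``$|y_i|\to\infty$ after blow-up''; your route is more explicit about where each hypothesis enters and what the constants depend on.

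One step in your argument needs more care. The claim that $B_\lambda(x_0)$ meets only one graphical sheet rests on $\mathrm{dist}(x_0,P_2)\ge c|x_0|/2$ together with (H4), but (H4) only gives $|u_s(x)|\le D_3(|x|^2+\cdots)$, which for $|x|\sim|x_0|$ near $1$ need not be small compared with $c|x_0|/2$ when $D_3$ is large relative to the transversality constant $c$. The fix is easy once noticed: for $|x_0|$ bounded below by a fixed constant depending on $D_3$ and $c$, the $L^s$ are uniformly smooth and embedded (they converge to the smooth embedded $L$ there), so for $\delta_1$ small the Gaussian ratio at scale $\sigma\le\sqrt{2\delta_1}$ is automatically close to $1$; your dichotomy is then only needed for the genuinely delicate range $|x_0|\lesssim c/D_3$, where the single-sheet argument goes through as you wrote it. The paper's compactness argument sidesteps this by sending $|y_i|\to\infty$ after rescaling, so that the unwanted sheet recedes to infinity automatically.
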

\begin{proof}
We first claim that there is a $K_0$ such that if $y_0 \in \mathbb{R}^{2n}$ has $|y_0| \geq K_0$ then for any $\lambda > 0$ and $s$ we have
\begin{equation*}
	\int_{\lambda(L^s\cap B_3(0))}\Phi(y_0,1)d\mathcal{H}^n \leq 1 + \varepsilon/2.
\end{equation*}
Indeed suppose that this were not the case, then there would exist sequences $y_i$, $\lambda_i$ and $s_i$ with $|y_i| \rightarrow \infty$ such that
\begin{equation}
	\int_{\lambda_i(L^{s_i}\cap B_3(0))} \Phi(y_i, 1) d\mathcal{H}^n \geq 1 + \varepsilon/2. \label{eq:shorttimeclaim1}
\end{equation}
First we note that $\lambda_i$ must be unbounded since
\begin{align*}
	\int_{\lambda_i(L^{s_i}\cap B_3(0))} \Phi(y_i, 1) d\mathcal{H}^n &\leq \int_{\lambda_i(L^{s_i}\cap B_3(0))}\frac{1}{(4\pi)^{n/2}}e^{-|y_i|^2/8}e^{3|x|^2/4}d\mathcal{H}^n \\
	&\leq e^{-|y_i|^2/8}\lambda_i^n\int_{L^{s_i}\cap B_3} \frac{1}{(4\pi)^{n/2}}e^{9\lambda_i^2/4}d\mathcal{H}^n \\
	&\leq C_i\lambda_i^n e^{-|y_i|^2/8 + c\lambda_i^2}\underbrace{\mathcal{H}^n(L^{s_i}\cap B_3(0))}_{\leq D_13^n}
\end{align*}
so it is easily seen that if $\lambda_i$ were bounded then \eqref{eq:shorttimeclaim1} would fail for large $i$. Next from the estimate (H4) we have that
\begin{equation*}
	|\nb^2 u^j_s(x)| \leq C\left(1 + \frac{\sqrt{2s}}{|x|^2}e^{-b|x|^2/2s}\right),
\end{equation*}
for every $x \in A(r_0\sqrt{2s}, 4)$ and hence
\begin{equation*}
	|A| \leq C\left(1 + \frac{1}{\sqrt{2s}}e^{-b|x|^2/2s}\right)
\end{equation*}
on $B_3\cap L^s$, since on $B_{r_0\sqrt{2s}}$ we have $|A| \leq C(2s)^{-1/2}$ where $C$ is a curvature bound for $\Sigma$. We rescale and define
\begin{equation*}
	\hat{L_i} := \lambda_iL^{s_i} \quad \quad \sigma_i := \lambda_i^2s_i,
\end{equation*}
so that on $\hat{L_i}$ we have the estimate
\begin{equation*}
	|A| \leq \frac{C}{\lambda_i}\left( 1 + \frac{1}{\sqrt{s_i}}e^{-b|x|^2/2\lambda_i^2s_i}\right) = C\left(\lambda_i^{-1} + \sigma_i^{-1/2} e^{-b|x|^2/2\sigma_i}\right).
\end{equation*}
Consequently $|A|\rightarrow 0$ uniformly on compact sets centred at $y_i$, so it follows that locally $\hat{L_i} - y_i $ converges to a plane, but this contradicts \eqref{eq:shorttimeclaim1}. \\
We next observe that (H1) ensures that we may choose $\delta_1 > 0$ small enough such that for any $x_0 \in B_1(0)$ and $l \leq 2\sqrt{\delta_1}$ we have
\begin{equation*}
	\int_{L^s\setminus B_3} \Phi(x_0, l) d \mathcal{H}^n \leq \varepsilon_0/2
\end{equation*}
By the monotonicity formula we have that for any $r^2, t \leq \delta_1$ 
\begin{align*}
	\Theta^s_t(x_0, r) &\leq \int_{L^s} \Phi(x_0, r^2 + t) d \mathcal{H}^n \\
	&= \int_{L^s\setminus B_3}\Phi(x_0, r^2 + t) d \mathcal{H}^n + \int_{L^s\cap B_3}\Phi(x_0, r^2 + t) d \mathcal{H}^n \\
	&\leq \varepsilon_0/2 + \int_{(r^2+t)^{-1}(L^s\cap B_3)}\Phi\left(\frac{x_0}{\sqrt{r^2 + t}},1\right)d\mathcal{H}^n \\
	&\leq 1 + \varepsilon_0
\end{align*}
provided that $|x_0| \geq K_0\sqrt{r^2 + t}$, so imposing the additional requirement that $r^2 \leq t$ this gives precisely the desired result.
\end{proof}

\begin{lemma}[Short-time estimate] \label{lem:shorttime}
Let $\varepsilon_0 > 0$. There are $s_1 > 0$ and $q_1 > 0$ such that if $s \leq s_1$, $r^2 \leq q_1 s$ and $t \leq q_1s$ then
\begin{equation}
	\Theta^s_t(x,r) \leq 1 + \varepsilon_0
\end{equation}
for all $x \in B_1$.
\end{lemma}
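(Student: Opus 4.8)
The plan is to exploit the rescaling identity $\Theta^s_t(x,r) = \tilde\Theta^s_t\bigl(x/\sqrt{2(s+t)},\, r/\sqrt{2(s+t)}\bigr)$ to convert the short-time, small-scale estimate on $L^s_t$ into a bounded-scale estimate on the rescaled flow $\tilde L^s_t$. When $t, r^2 \leq q_1 s$, both $s+t$ and the rescaled center $x/\sqrt{2(s+t)}$ and radius $r/\sqrt{2(s+t)}$ are controlled: the radius is at most $\sqrt{q_1/2}$ and, more importantly, since $|x|\leq 1$ the rescaled center has modulus at most $1/\sqrt{2s}$, which blows up. So the right object to compare against is not a single self-expander at a fixed point but the family $\tilde L^s_0 = \tilde L^s = (2s)^{-1/2}L^s$, which by (H3) converges in $C^{1,\alpha}_{loc}$ to $\Sigma$ and has uniformly bounded second fundamental form. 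First I would use Huisken monotonicity in the form already derived, namely $\Theta^s_t(x,r) \leq \int_{L^s}\Phi(x, t+r^2)\,d\mathcal{H}^n$, and rescale by $(2s)^{-1/2}$ to get $\Theta^s_t(x,r) \leq \int_{\tilde L^s}\Phi\bigl(x/\sqrt{2s},\, (t+r^2)/(2s)\bigr)\,d\mathcal{H}^n =: \tilde\Theta^s_0\bigl(\tilde x, \tilde l\bigr)$ with $\tilde l^2 = (t+r^2)/(2s) \leq q_1$ and $\tilde x = x/\sqrt{2s}$.

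Next I would bound $\tilde\Theta^s_0(\tilde x, \tilde l)$ uniformly. Here there are two regimes for $\tilde x$. If $|\tilde x|$ is large (say $\geq$ some $K$), then since $\tilde L^s$ has uniformly bounded $|A|$ and (by (H1) rescaled) uniformly bounded area ratios, and since the Gaussian $\Phi(\tilde x, \tilde l)$ with $\tilde l \leq \sqrt{q_1}$ is concentrated near $\tilde x$ at scale $\leq \sqrt{q_1}$, the integral is close to the density ratio of a nearly-flat piece of $\tilde L^s$, hence close to $1$ — this is essentially the same compactness/blow-up argument used in the ``far from the origin'' lemma, applied to $\tilde L^s$ instead of $L^s$. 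If instead $|\tilde x| \leq K$, then I am looking at the density ratios of $\tilde L^s$ at bounded center and bounded (small) scale; by (H3), $\tilde L^s \to \Sigma$ in $C^{1,\alpha}_{loc}$, so for $s \leq s_1$ small, $\tilde L^s$ is $C^1$-close to $\Sigma$ on $B_{K+\sqrt{q_1}}$, and $\Sigma$ itself is a smooth embedded submanifold with $\Theta^\Sigma(y) = 1$ everywhere, so its density ratios at scale $\leq \sqrt{q_1}$ are $\leq 1 + \varepsilon_0/2$ once $q_1$ is chosen small (uniform continuity of $r\mapsto$ density ratio at a fixed smooth point, plus a compactness argument in the center over the compact ball $\overline B_K$); the $C^1$-closeness of $\tilde L^s$ to $\Sigma$ then transfers this to $\tilde L^s$ up to a further $\varepsilon_0/2$. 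Combining the two regimes and choosing $q_1$ small enough and then $s_1$ small enough gives $\tilde\Theta^s_0(\tilde x, \tilde l) \leq 1 + \varepsilon_0$ for all relevant $\tilde x$, which is the claim.

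The main obstacle I anticipate is the large-$|\tilde x|$ regime, because there the relevant ``model'' is a piece of $L^s$ near where it transitions from the expander scale to the plane $P$, and one must verify that the rescaled curvature genuinely decays there. This is exactly the content of the curvature estimate $|A| \leq C(1 + (2s)^{-1/2} e^{-b|x|^2/2s})$ on $L^s \cap B_3$ extracted from (H4) in the proof of Lemma \ref{lem:fforigin}; rescaling by $(2s)^{-1/2}$ kills the leading $C$ and the exponential term is harmless away from the origin, so $\tilde L^s$ has curvature tending to $0$ locally uniformly on $\{|\tilde x| \geq K\}$ as $s \to 0$ (for fixed large $K$), forcing any blow-up limit to be a plane and hence density ratio $\to 1$. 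The only subtlety is making the dependence of constants on $K$, $q_1$, $s_1$ consistent — one fixes $\varepsilon_0$, then $K$ from the large-$\tilde x$ compactness, then $q_1$ from the smoothness of $\Sigma$ on $\overline B_{K + 1}$, then finally $s_1$ from (H3)-convergence on that ball — and checking that the constraint $r^2, t \leq q_1 s$ indeed forces $\tilde l^2 \leq q_1$ and $|\tilde x| = |x|/\sqrt{2s}$, with $|x| \leq 1$, so that the two regimes $|\tilde x| \lessgtr K$ between them cover all cases once $s$ is small (in fact for $s$ small every $\tilde x$ with $|x|\leq 1$ has $|\tilde x|$ large, so only the first regime is needed, but it is cleaner to present both).
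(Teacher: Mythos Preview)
Your approach is correct but differs from the paper's in how the time variable is handled. The paper first invokes Lemma~\ref{lem:fforigin} to dispose of the region $|x|\geq K_0\sqrt{2t}$, then rescales the \emph{flow} itself by $(2s)^{-1/2}$ to obtain a mean curvature flow $\hat L^s_l = (2s)^{-1/2}L^s_{2sl}$ starting from $\tilde L^s$, and applies Lemma~\ref{lem:c1alphaclose} (which in turn rests on the graphical evolution Lemma~\ref{lem:graphicalest}) to bound the density ratios of $\hat L^s_l$ for $l,r^2\leq q_1$ and centres in $B_{K_0\sqrt{q_1}}$; the point is that the centre is already confined to a bounded ball after the first reduction, so only the ``small $|\tilde x|$'' regime is needed. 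You instead use Huisken monotonicity to push everything back to $t=0$ and then argue directly about the static density ratios of $\tilde L^s$. This is more elementary in that it bypasses Lemma~\ref{lem:c1alphaclose} and the evolution estimate entirely, at the cost of having to handle the large-$|\tilde x|$ regime by hand (essentially re-deriving the content of Lemma~\ref{lem:fforigin} for $\tilde L^s$). Note that the paper's route through Lemma~\ref{lem:c1alphaclose} is not gratuitous: that lemma is invoked again in the proof of Theorem~\ref{thm:mainthm}, so the paper economises by packaging the $C^{1,\alpha}$-closeness $\Rightarrow$ density-ratio argument once.

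One small correction: your closing parenthetical is wrong. For small $s$ and $|x|\leq 1$, the rescaled centre $\tilde x = x/\sqrt{2s}$ ranges over all of $B_{1/\sqrt{2s}}$, including points near the origin (take $|x|$ comparable to $\sqrt s$). So both regimes are genuinely needed in your scheme; you cannot reduce to the large-$|\tilde x|$ case alone. Also, in the large-$|\tilde x|$ regime be explicit that single-sheetedness near $\tilde x$ comes from the graphical decomposition over $P$ given by (H4), together with the transversality of $P_1,P_2$ (which separates the two sheets at distance $\gtrsim |\tilde x|$); bounded $|A|$ alone does not preclude nearby parallel sheets.
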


\begin{proof}
By lemma \ref{lem:fforigin} we need only prove the estimate for $x \in B_{K_0\sqrt{2t}}$. We apply lemma \ref{lem:c1alphaclose} with $R = K_0\sqrt{q_1} + 1$ where $q_1 = q_1(\Sigma, \varepsilon_0, \alpha)$ and the rescaled flow $\hat L_t := (2s)^{-1/2}L^s_{2st}$. This is a mean curvature flow with initial condition $\tilde L^s$. By (H3) we know that $\tilde L^s \rightarrow \Sigma$ in $C^{1,\alpha}_{loc}$, so in particular for $s$ small enough, we can ensure that $\tilde L^s$ is $\varepsilon(\varepsilon_0, \Sigma, \alpha)$-close to $\Sigma$ in $C^{1,\alpha}$. Hence for $r^2, t \leq q_1$ and $x \in B_{K_0\sqrt{q_1}}$ we have
\begin{equation*}
	\hat \Theta^s_t(x,r) = \int_{\hat L^s_t} \Phi(x, r^2) d\mathcal{H}^n = \int_{L^s_{2st}} \Phi(2sx, 2sr^2) d\mathcal{H}^n \leq 1 + \varepsilon_0,
\end{equation*}
or in other words
\begin{equation*}
	\Theta^s_t(x,r) \leq 1 + \varepsilon_0,
\end{equation*}
for all $r^2, t \leq q_1s$ and $x\in B_{K_0\sqrt{2sq_1}}$. However since $t\leq q_1s$ this holds for all $x \in B_{K_0\sqrt{2t}}$.
\end{proof}

The next lemma shows that in an annular region, and for short times, we retain control on both the distance to $P$ and the Gaussian density ratios that is uniform in $s$.

\begin{lemma}[Proximity to $P = P_1+P_2$]\label{lem:proximity}
There are constants $C_1$, and $r_1$ such that for any $\nu > 0$ there are $s_2, \delta_2 > 0$ such that the following holds. If $s \leq s_2$, $t \leq \delta_2$ and $r \leq 2$ then we have the estimates
\begin{equation*}
	\mathrm{dist}(y_0, P) \leq \nu + C_1e^{-|y_0|^2/C_1} \quad\quad \forall y_0 \in \tilde L^s_t \cap A(r_1, (s+t)^{-1/8})
\end{equation*}
and
\begin{equation*}
	\tilde \Theta^s_t(y_0, r) \leq 1 + \frac{\varepsilon_0}{2} + \nu \quad \quad \forall y_0 \in A(r_1, (s+t)^{-1/8}).
\end{equation*}
\end{lemma}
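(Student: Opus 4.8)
The plan is to prove both estimates simultaneously by the same rescaling-and-contradiction scheme used for Theorem \ref{thm:stability}, applied now to the rescaled flows $\tilde L^s_t$ rather than to static Lagrangians. The idea is that for $s,t$ small and $y_0$ in the stated annulus, a fixed unit ball around $y_0$ sees a piece of $\tilde L^s_t$ which, by the hypotheses (H1)–(H4) and the local monotonicity of Lemma \ref{lem:localmon}, is forced to be weakly close to the self-expander $\Sigma$; then either Theorem \ref{thm:stability} (near a bounded region) or the graphical decay estimate of (H4) (in the outer part of the annulus, where $\Sigma$ and $P$ are already $C^{1,\alpha}$-close) gives the pointwise distance bound, and the density bound follows from the monotonicity formula together with the short-time estimate Lemma \ref{lem:shorttime} as a ``seed''. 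The constants $C_1$, $r_1$ should come out of the asymptotics of $\Sigma$ towards $P$ (so they depend only on $\Sigma$, hence only on $P$), while $s_2,\delta_2$ are extracted at the end from the contradiction argument once $\nu$ is fixed.

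Concretely, the key steps, in order, are as follows. First, use the scaling identity $\Theta^s_t(x_0,r)=\tilde\Theta^s_t(x_0/\sqrt{2(s+t)},\,r/\sqrt{2(s+t)})$ together with Lemma \ref{lem:shorttime} to record that $\tilde\Theta^s_t(y_0,r)\le 1+\varepsilon_0$ already holds for $y_0$ in a large but fixed ball, which will anchor the induction/continuity in $t$. Second, run the local monotonicity formula of Lemma \ref{lem:localmon}, written for the rescaled flow $\tilde L^s_t$ (for which the relevant drift is $2t\vec H - x^\perp$, matching the expander equation $\vec H - x^\perp=0$ in the limit $t\to 0$): hypothesis (H3), namely $\tilde\theta^s+\tilde\beta^s\to 0$ locally, makes the boundary term $\int_{(B_3\setminus B_2)}\alpha_t^2\rho$ small at $t=0$, and Gronwall in $t$ propagates smallness of $\int \phi\,\alpha_t^2\rho$ and hence of $\int\phi\,|2t\vec H - x^\perp|^2\rho$ for a short time. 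Third, suppose the lemma fails: take sequences $s_i,t_i\to 0$ (or staying below thresholds that shrink to zero) and points $y_i$ in the respective annuli violating one of the two conclusions; translate by $y_i$ and pass to a $C^{1,\alpha}_{loc}$ limit using the uniform second-fundamental-form bound on $\tilde L^s$ from (H3) and the curvature estimates of the appendix applied for the short time interval. The limit is a smooth Lagrangian satisfying $\vec H = x^\perp$ weakly; if $|y_i|$ stays bounded, Theorem \ref{thm:stability} identifies the limit with $\Sigma$ and contradicts the failure of either estimate; if $|y_i|\to\infty$, the estimate (H4) — whose exponential term $\sqrt{2s}\,e^{-b|x|^2/2s}$ becomes $\sqrt{s}\,e^{-b|x|^2}$ after rescaling and hence is absorbed into the $C_1 e^{-|y_0|^2/C_1}$ term — shows $\tilde L^s_t$ is $C^{1,\alpha}$-close to a plane near $y_i$, again contradicting the failure.

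The main obstacle, I expect, is the propagation-in-time step: the hypotheses (H1)–(H4) are stated for the \emph{initial} data $L^s$, but the two conclusions must hold for the flow $\tilde L^s_t$ at positive (though small) time, and the annular region $A(r_1,(s+t)^{-1/8})$ has an outer radius that genuinely depends on $(s+t)$, so one cannot simply quote a static statement. One must show that the weak closeness to $\Sigma$ (measured by $\int\phi\,\alpha_t^2\rho$ or by $\int|2t\vec H-x^\perp|^2\rho$) is not destroyed over the short time interval, uniformly across the whole growing annulus; this is exactly where Lemma \ref{lem:localmon}, applied at many scales/centres, carries the weight, and where the choice of the exponent $1/8$ in $(s+t)^{-1/8}$ must be reconciled with the Gaussian weight so that the contributions from outside $B_{(s+t)^{-1/8}}$ remain negligible. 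A secondary technical point is keeping the extracted constants in the right dependency class — $C_1,r_1$ must not depend on $\nu$ — which forces the distance bound to be split into a ``core'' part (handled by stability, giving the additive $\nu$) and a ``tail'' part (handled by the asymptotics of $\Sigma$ and (H4), giving the $C_1 e^{-|y_0|^2/C_1}$).
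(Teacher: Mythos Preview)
Your plan takes a route that is both more elaborate than the paper's and, as written, circular. The paper does \emph{not} use the stability theorem (Theorem \ref{thm:stability}), the local monotonicity for $\alpha_t$ (Lemma \ref{lem:localmon}), or any compactness argument here. Instead it uses only Huisken's monotonicity formula to push every integral back to the initial time: for $y_0\in\tilde L^s_t$ one has $1\le \Theta^s_0(y_0\sqrt{2(s+t)},\sqrt{t})=\int_{\Sigma^{(s,t)}}\Phi(y_0,l)\,d\mathcal H^n$ with $\Sigma^{(s,t)}:=L^s/\sqrt{2(s+t)}$ and $l:=t/2(s+t)\le 1/2$; likewise $\tilde\Theta^s_t(y_0,r)\le \int_{\Sigma^{(s,t)}}\Phi(y_0,l+r^2)\,d\mathcal H^n$. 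Thus both conclusions become statements about Gaussian integrals over the \emph{initial data}, for which (H4) gives an explicit graphical description over $P$ in the annulus $A(r_0,3(s+t)^{-1/8})$. The proof then decomposes $\int_{\Sigma^{(s,t)}}\Phi(y_0,\cdot)$ into a core piece, a far piece, and the graphical annular piece; the first two are $O(e^{-|y_0|^2/C_1})$ by (H1) and elementary Gaussian tail bounds, and the annular piece is handled componentwise, using that $y_0$ is exponentially far (in $|y_0|$) from the ``wrong'' plane $P_2$ and nearly on $P_1$. The distance bound is extracted from the inequality $1\le \int_{\Sigma_1^{(s,t)}}\Phi(y_0,l)+C_1e^{-|y_0|^2/C_1}$ by a direct computation with the graph function $v^1_{(s,t)}$. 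No information whatsoever about $L^s_t$ for $t>0$ is used beyond smoothness (so that $y_0$ is a regular point).

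The concrete gap in your scheme is the curvature bound you need to run compactness and to feed hypothesis (i) of Theorem \ref{thm:stability}. Hypothesis (H3) bounds $|A|$ only on $\tilde L^s_0$, not on $\tilde L^s_t$; the appendix estimates (Corollary \ref{cor:curvatureest}) apply to genuine mean curvature flow in regions that are graphical with small gradient, which at the original scale covers only $A(1/3,3)$ (cf.\ Lemma \ref{lem:thetabetabound}) and misses the inner part of your annulus since $r_1\sqrt{2(s+t)}\to 0$. In the paper's logical order, curvature control on $\tilde L^s_t$ in this annulus (Lemma \ref{lem:normaldev}) is obtained \emph{from} the density bound of the present lemma via White's theorem, so invoking it here would be circular. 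Similarly, the $\int|\vec H - x^\perp|^2$ smallness you want to extract from Lemma \ref{lem:localmon} is exactly Lemma \ref{lem:mon}, proved later and itself relying on Lemma \ref{lem:thetabetabound}; even granting it, you still lack the $|A|\le M$ input to Theorem \ref{thm:stability}. The fix is to abandon the stability/compactness route for this lemma and instead reduce to $t=0$ by monotonicity, then compute directly with the graphical decomposition from (H4).
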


\begin{proof}
We consider $t\leq\delta_2$ and $s\leq s_2$ (both $\delta_2$ and $s_2$ to be chosen) and define
\begin{equation*}
	l := \frac{t}{2(s+t)} \quad\quad \Sigma^{(s,t)} := \frac{L^s}{\sqrt{2(s+t)}}.
\end{equation*}
Clearly $l \leq 1/2$ and also from (H4) we have that if $s_2$, $\delta_2$ are chosen small enough, then
\begin{equation*}
	\Sigma^{(s,t)}\cap A(r_0, 3(s+t)^{-1/8})
\end{equation*}
is graphical over $P \cap A(r_0, 3(s+t)^{-1/8})$. Moreover if $v_{(s,t)}$ is the function arising from this graphical decomposition then we have by scaling the estimate of (H4) that
\begin{align*}
	|v_{(s,t)}(x)| + &|x||\nb v_{(s,t)}(x)| + |x|^2|\nb^2v_{(s,t)}(x)| \\&\leq D_3\left(\sqrt{2(s+t)}|x|^2 + \left(\frac{\sqrt{2s}}{\sqrt{2(s+t)}}\right)e^{-2b(s+t)|x|^2/2s}\right) \\
	&\leq D_3\left(\sqrt{2(s+t)}|x|^2 + e^{-b|x|^2}\right).
\end{align*}
Let $c > 0$ be a constant that will be chosen later. If $s_2(D_3, r_0, c)$ and $\delta_2(D_3, r_0, c) > 0$ are small enough and $r_1(P,c) \geq \max\{r_0, 1\}$ is chosen to be large enough then we can ensure that
\begin{equation}
	|v_{(s,t)}(x)| + |x||\nb v_{(s,t)}(x)| \leq D_3\left(\sqrt{2(s+t)}|x|^2 + e^{-b|x|^2}\right) \leq c/2 \label{eq:closenessest}
\end{equation}
on $A(r_1, 3(s+t)^{-1/8})$. From now on we fix some $y_0 \in \tilde L^s_t\cap A\left(3r_1 + 1, (s+t)^{-1/8}\right)$, then $y_0\sqrt{2(s+t)}$ is a regular point of $(L^s_t)$ so by the monotonicity formula
\begin{equation*}
	1 \leq \Theta^s_0(y_0\sqrt{2(s+t)},\sqrt{t}) =  \int_{\Sigma^{(s,t)}}\Phi(y_0, l)d\mathcal{H}^n =: A + B + C
\end{equation*}
where
\begin{align*}
	A &:= \int_{\Sigma^{(s,t)}\setminus B_{3(s+t)^{-1/8}}} \Phi(y_0, l) d\mathcal{H}^n \\
	B &:= \int_{\Sigma^{(s,t)}\cap B_{r_1}} \Phi(y_0, l)d\mathcal{H}^n \\
	C &:= \int_{\Sigma^{(s,t)}\cap A(r_1, 3(s+t)^{-1/8})} \Phi(y_0, l)d\mathcal{H}^n
\end{align*}
We first estimate $A$. If $|x| \geq 3(s+t)^{-1/8} \geq 3|y_0|$ then
\begin{equation*}
	|x - y_0|^2 = |x|^2 - 2|x||y_0| + |y_0|^2 \geq |x|^2 - \frac{2|x|^2}{3} + |y_0|^2 = \frac{|x|^2}{3} + |y_0|^2,
\end{equation*}
so
\begin{equation*}
	\Phi(y_0, l) = \frac{1}{(4\pi l)^{n/2}}e^{-|x-y_0|^2/4l} \leq \frac{1}{(4\pi l)^{n/2}}e^{-|y_0|^2/4l}e^{-|x|^2/12l} = 3^{n/2}e^{-|y_0|^2/4l}\Phi(0,3l).
\end{equation*}
Therefore by choosing $C_1 = C_1(D_1, n)$ we can estimate
\begin{align*}
	A = \int_{\Sigma^{(s,t)}\setminus B_{3(s+t)^{-1/8}}} \Phi(y_0, l)d\mathcal{H}^n &\leq 3^{n/2} e^{-|y_0|^2/4l}\int_{\Sigma^{(s,t)}\setminus B_{3(s+t)^{-1/8}}} \Phi(0, 3l)d\mathcal{H}^n \\
	&\leq 3^{n/2}e^{-|y_0|^2/4l}\int_{(3l)^{-1/2}\Sigma^{(s,t)}} \Phi(0,1) d\mathcal{H}^n \\
	&\leq C_1 e^{-|y_0|^2/C_1},
\end{align*}
since $l$ is bounded independent of $s$ and $t$, and the estimate (H1) is scale invariant, so in particular is satisfied by $(3l)^{-1/2}\Sigma^{(s,t)}$. \\
Next we estimate $B$. Similarly as before we find that for $|x| \leq r_1 \leq |y_0|/3$ we have
\begin{equation*}
	|x-y_0|^2 \geq |x|^2 + \frac{|y_0|^2}{3}.
\end{equation*}
Thus
\begin{equation*}
	\Phi(y_0, l) \leq e^{-|y_0|^2/12}\Phi(0, l) \quad \quad \text{on   } B_{r_1}
\end{equation*}
hence by possibly increasing $C_1$ if necessary we have
\begin{equation*}
	B = \int_{\Sigma^{(s,t)}\cap B_{r_1}} \Phi(y_0, l) d\mathcal{H}^n \leq e^{-|y_0|^2/4l}\int_{\Sigma^{(s,t)}\cap B_{r_1}}\Phi(0,l)d\mathcal{H}^n \leq C_1e^{-|y_0|^2/C_1}.
\end{equation*}
Finally we deal with $C$. We denote by $a_i$ the orthogonal projection of $y_0$ onto $P_i$ and by $b_i$ the orthogonal projection of $y_0$ onto $P_i^\perp$. We suppose without loss of generality that
\begin{equation*}
	\mathrm{dist}(y_0, P) = |b_1|.
\end{equation*}
We will also denote by $\Sigma^{(s,t)}_i$ the component of $\Sigma^{(s,t)}\cap A(r_1, 3(s+t)^{-1/8})$ that is graphical over $\Pi_i := P_i\cap A(r_1, 3(s+t)^{-1/8})$, and by $v^i_{(s,t)}$ the corresponding function. Since we have that $P_1\cap P_2 = \{0\}$ it follows that for some $c = c(P) > 0$ we have that $|b_2| \geq c|y_0|$. Notice that since $|b_2| \leq |y_0|$ we have that $c \leq 1$. Suppose that $x \in \Sigma^{(s,t)}_2$, and denote by $x'$ the orthogonal projection onto $P_2$. Then we have
\begin{equation*}
	|y_0 - x|^2 = |a_2 + b_2 - x' - v^2_{(s,t)}(x')|^2 = |a_2 - x'|^2 + |b_2 - v^2_{(s,t)}(x')|^2.
\end{equation*}
Moreover by \eqref{eq:closenessest}, if $r_1$ is chosen large enough (and in particular larger than $1$),
\begin{equation*}
	|v^2_{(s,t)}(x')| \leq \frac{c}{2} \leq \frac{c|y_0|}{2}
\end{equation*}
so
\begin{equation*}
	|b_2 - v^2_{(s,t)}(x')| \geq |b_2| - |v^2_{(s,t)}(x')| \geq \frac{c|y_0|}{2}.
\end{equation*}
Consequently, defining $g_{ij} := \delta_{ij} + D_iv^2_{(s,t)}\cdot D_jv^2_{(s,t)}$, we can estimate
\begin{align*}
	\int_{\Sigma^{(s,t)}_2}\Phi(y_0,l) d\mathcal{H}^n &= \int_{\Pi_2} \frac{1}{(4\pi l)^{n/2}} \exp\left(\frac{-|a_2 - x'|^2-|b_2 - v_{(s,t)}^2(x')|^2}{4l}\right)\sqrt{\mathrm{det}(g_{ij})}d x' \\
	&\leq C e^{-c^2|y_0|^2/16l}\int_{P_2}\frac{1}{(4\pi l)^{n/2}}e^{-|a_2 - x'|^2/4l} dx' \\
	&\leq C_1e^{-|y_0|^2/C_1},
\end{align*}
where we used \eqref{eq:closenessest} to estimate the gradient terms arising in the surface measure. Combining this with the estimates for $A$ and $B$ we have that
\begin{equation}
	1 \leq \int_{\Sigma^{(s,t)}} \Phi(y_0, l) d\mathcal{H}^n \leq \int_{\Sigma^{(s,t)}_1} \Phi(y_0, l) d\mathcal{H}^n + C_1\exp\left(\frac{-|y_0|^2}{C_1}\right). \label{eq:sigma1lowerbound}
\end{equation}
Increasing $r_1$ for the last time if necessary, we can ensure that
\begin{equation*}
	C_1\exp\left(\frac{-|y_0|^2}{C_1}\right) \leq \frac{1}{2}.
\end{equation*}
Therefore we have that
\begin{equation*}
	\frac{1}{2} \leq \int_{\Sigma^{(s,t)}_1} \Phi(y_0,l)d\mathcal{H}^n \leq C\sup_{\Pi_1} \exp \left(-\frac{|b_1 - v^1_{(s,t)}|^2}{4l}\right).
\end{equation*}
Therefore it follows that $|b_1 - v^1_{(s,t)}|^2/4l$ is bounded on $\Pi_1$ independently of $l$, $s$ and $t$, thus we can estimate
\begin{equation*}
	\frac{|b_1 - v^1_{(s,t)}|^2}{4l} \leq C\left(1 - e^{-|b_1 - v^1_{(s,t)}|^2/4l}\right),
\end{equation*}
on $\Pi_1$ where $C$ is independent of $s$ and $t$. Moreover because the matrix $(D_iv^1_{(s,t)}\cdot D_jv^1_{(s,t)})$ has non-negative eigenvalues we have that
\begin{equation*}
	\sqrt{\det(\delta_{ij} + D_iv^1_{(s,t)}\cdot D_jv^1_{(s,t)})} \geq 1
\end{equation*}
so we can estimate
\begin{align*}
	\int_{\Pi_1} &\frac{|v^1_{(s,t)} - b_1|^2}{4l} \frac{\exp(-|x' - a_1|^2/4l)}{(4\pi l)^{n/2}}dx' \\ 
	&\leq C\int_{\Pi_1} \frac{\exp(-|x' - a_1|^2/4l)}{(4\pi l)^{n/2}}\left(1 - \exp\left(-\frac{|b_1 - v^1_{(s,t)}|^2}{4l}\right)\right)\sqrt{\det(g_{ij})}dx'\\
	&= C\left(\int_{\Pi_1} \frac{\exp(-|x' - a_1|^2/4l)}{(4\pi l)^{n/2}}\sqrt{\det(g_{ij})}dx' - \int_{\Sigma^{(s,t)}_1}\Phi(y_0,l)d\mathcal{H}^n\right) \\
	&\leq C\left(\int_{\Pi_1} \frac{\exp(-|x' - a_1|^2/4l)}{(4\pi l)^{n/2}}\sqrt{\det(g_{ij})}dx' - 1\right) + C_1\exp(-|y_0|^2/C_1) \\
	&\leq C\int_{\Pi_1} |\nb v^1_{(s,t)}| \frac{\exp(-|x'-a_1|^2/4l)}{(4\pi l)^{n/2}}dx' + C_1\exp(-|y_0|^2/C_1)
\end{align*}
where we used \eqref{eq:sigma1lowerbound} and the Taylor expansions for the square root and determinant functions as discussed in Lemma \ref{lem:graphicalest}. Therefore since
\begin{equation*}
	|b_1|^2 \leq (|b_1 - v^1_{(s,t)}| + |v^1_{(s,t)}|)^2 \leq 2(|b_1 - v^1_{(s,t)}|^2 + |v^1_{(s,t)}|^2)
\end{equation*}
we can estimate
\begin{equation*}
	|b_1|^2 \leq C_1\int_{\Pi_1} (|v^1_{(s,t)}|^2 + |\nb v^1_{(s,t)}|)\frac{\exp(-|x' - a_1|^2/4l)}{(4\pi l)^{n/2}}dx' + C_1\exp(-|y_0|^2/C_1).
\end{equation*}
We want to now control the integral terms on the right hand side. First we observe that $|a_1| \geq c|y_0|$ for some constant depending only on $P$. Moreover for any $0 \leq l \leq 1$ we have for any $x$, $a_1 \in \mathbb{R}^{2n}$
\begin{align*}
	b|x+a_1|^2 + \frac{|x|^2}{4l} &= |x|^2\left(\frac{1}{4l} + b\right) + |a_1|^2b + 2bx\cdot a_1 \\
	&\geq |x|^2\left(\frac{1}{4l} + b\right) + |a_1|^2b - \frac{8bl + 1}{8l}|x|^2 - \frac{8b^2l}{8bl+1}|a_1|^2 \\
	&\geq \frac{|x|^2}{8l} + \frac{b|a_1|^2}{8bl+1} 
\end{align*}
and hence for some $C_1 = C_1(D_1, D_3, P)$ we have
\begin{align*}
	\int_{\Pi_1} |\nb v^1_{(s,t)}|\frac{e^{-|x' - a_1|^2/4l}}{(4\pi l)^{n/2}}dx' &\leq C_1\int_{\Pi_1}\left(\sqrt{s+t}|x'| + e^{-b|x'|^2}\right)\frac{e^{-|x' - a_1|^2/4l}}{(4\pi l)^{n/2}}dx' \\
	&\leq C_1\sqrt{s+t} + D_3\int_{\mathbb{R}^n}e^{-b|x'|^2}\frac{e^{-|x' - a_1|^2/4l}}{(4\pi l)^{n/2}}dx' \\
	&\leq C_1\sqrt{s+t} + C_1\int_{\mathbb{R}^n}e^{-b|x'+a_1|^2} \frac{e^{-|x'|^2/4l}}{(4\pi l)^{n/2}}dx' \\
	&\leq C_1\sqrt{s+t} + C_1e^{-|a_1|^2/C_1}\int_{\mathbb{R}^n}\frac{e^{-|x'|^2/8l}}{(4\pi l)^{n/2}}dx' \\
	&\leq C_1\sqrt{s+t} + C_1e^{-|y_0|^2/C_1}.
\end{align*}
Similarly we can estimate
\begin{equation*}
	|v^1_{(s,t)}|^2 \leq C_1\left((t+s)|x|^4 + e^{-2b|x|^2}\right)
\end{equation*}
using \eqref{eq:closenessest}. So an entirely analogous calculation establishes the estimate
\begin{equation*}
	\int_{\Pi_1} |v^1_{(s,t)}|^2\frac{e^{-|x' - a_1|^2/4l}}{(4\pi l)^{n/2}}dx' \leq C_1\left((s+t) + e^{-|y_0|^2/C_1}\right).
\end{equation*}
Therefore choosing $s_2$ and $\delta_2$ depending on $D_1, D_2, P, r_0, \nu$ and $b$ we have that for all $s \leq s_2$ and $t \leq \delta_2$ we have
\begin{equation*}
	b_1 = \mathrm{dist}(y_0, P) \leq \nu + C_1e^{-|y_0|^2/C_1}
\end{equation*}
We next want to show that, possibly by increasing or decreasing $r_1$, $s_1$ and $\delta_1$ if necessary, that we also have the estimate
\begin{equation*}
	\tilde \Theta_t^s(y_0, r) \leq 1 + \frac{\varepsilon}{2} + \nu
\end{equation*}
for any $r \leq 2$. We have
\begin{align*}
	\tilde \Theta^s_t(y_0, r) &= \int_{\tilde L^s_t} \frac{1}{(4\pi r^2)^{n/2}}\exp\left(\frac{-|x-y_0|^2}{4r^2}\right)d\mathcal{H}^n \\
	&= \int_{L^s_t} \frac{1}{(4\pi(2(s+t))r^2)^{n/2}}\exp\left(\frac{-|x - \sqrt{2(s+t)}y_0|^2}{4r^2(2(s+t))}\right)d\mathcal{H}^n \\
	&= \Theta^s_{t}(\sqrt{2(s+t)}y_0, \sqrt{2(s+t)}r) \\
	&\leq \Theta^s_0(\sqrt{2(s+t)}y_0, \sqrt{2(s+t)r^2 + t}) \\
	&= \int_{L^s} \frac{1}{(4\pi(2(s+t)r^2 + t))^{n/2}}\exp\left(\frac{-|x -  \sqrt{2(s+t)}y_0|^2}{4(2(s+t)r^2 + t)}\right) d\mathcal{H}^n \\
	&= \int_{\Sigma^{(s,t)}} \frac{1}{(4\pi(r^2 + l))^{n/2}}\exp\left(\frac{-|x - y_0|^2}{4(l + r^2)}\right)d \mathcal{H}^n \\
	&= \int_{\Sigma^{(s,t)}} \Phi(y_0, l + r^2) d\mathcal{H}^n
\end{align*}
Therefore by splitting up the integral as before and estimating exactly analogously we have
\begin{align*}
	\tilde \Theta^s_{t}(y_0, r) &\leq \int_{\Sigma^{(s,t)}_1} \Phi(y_0, l + r^2)d\mathcal{H}^n + C_1 \exp\left(\frac{-|y_0|^2}{C_1}\right) \\
	&\leq \int_{\Pi_1} \frac{\exp\left(\frac{-|x' - a_1|^2}{4(l + r^2)} \right)}{(4\pi(l+r^2))^{n/2}}\sqrt{\mathrm{det}(\delta_{ij} + D_iv^1_{(s,t)}\cdot D_jv^1_{(s,t)})}dx' + C_1 \exp\left(\frac{-|y_0|^2}{C_1}\right) \\
	&\leq 1 + C_1\int_{\Pi_1}|\nb v^1_{(s,t)}| \frac{\exp\left(\frac{-|x' - a_1|^2}{4(l + r^2)} \right)}{(4\pi(l+r^2))^{n/2}}dx'  +C_1 \exp\left(\frac{-|y_0|^2}{C_1}\right) \\
	&\leq 1 + C_1\sqrt{s+t} + C_1\int_{\mathbb{R}^n} e^{-b|x'|^2} \frac{\exp\left(\frac{-|x' - a_1|^2}{4(l + r^2)} \right)}{(4\pi(l+r^2))^{n/2}} dx  + C_1 \exp\left(\frac{-|y_0|^2}{C_1}\right) \\
	&= 1 + C_1\sqrt{s+t} + C_1\int_{\mathbb{R}^n} e^{-b|x'+a_1|^2} \frac{\exp\left(\frac{-|x'|^2}{4(l + r^2)} \right)}{(4\pi(l+r^2))^{n/2}} dx  + C_1 \exp\left(\frac{-|y_0|^2}{C_1}\right)
\end{align*}
We want to estimate the exponential terms and pull out an exponential factor in $|a_1|$ so we estimate
\begin{align*}
	b|x+a_1|^2 + \frac{|x|^2}{4(l+r^2)} &\geq |x|^2\frac{4b(l+r^2)+1}{4(l+r^2)} + b|a_1|^2 - \frac{8b(l+r^2) + 1}{8(l+r^2)}|x|^2 \\
	&\quad\quad- \frac{8b^2(l+r^2)}{8b(l+r^2) + 1}|a_1|^2 \\
	&= \frac{|x|^2}{8(l+r^2)} + \frac{b|a_1|^2}{8b(l+r^2) + 1} \\
	&\geq \frac{|x|^2}{8(l+r^2)} + \frac{|a_1|^2}{C_1}
\end{align*}
where we used the fact that $l$ and $r$ are both bounded independently of $s$ and $t$. Therefore putting this together we have
\begin{align*}
	\tilde \Theta_t^s(y_0,r) &\leq 1 + C_1\sqrt{s+t} + C_1e^{-|a_1|^2/C_1}\int_{\mathbb{R}^n}\frac{e^{-|x|^2/8(l+r^2)}}{(4\pi(l+r^2))^{n/2}}dx + C_1e^{-|y_0|^2/C_1} \\
	&\leq 1 + C_1\sqrt{s+t} + C_1e^{-|y_0|^2/C_1}.
\end{align*}
Evidently an appropriate choice of $r_1$, $s_2$ and $\delta_2$ yields the required result.
\end{proof}

The following two Lemmas show that we have additional control in annular regions, specifically on normal deviation, curvature, Lagrange angle and the primitive for the Liouville form.

\begin{lemma} \label{lem:normaldev}
Let $F^s_t: L^s\rightarrow \mathbb{R}^{2n}$ be the normal deformation such that $L^s_t = F^s_t(L^s)$. We also define $\tilde F^s_t := (2(s+t))^{-1/2}F^s_t$ so that $\tilde L^s_t = \tilde F^s_t(L^s)$. Then there exist $r_2$, $\delta_3$, $s_3$ and $K < \infty$ such that if $t \leq \delta_3$ and $s \leq s_3$ then
\begin{equation*}
	\left|\tilde F^s_0(x) - \tilde F^s_t(x)\right| \leq K \quad\quad \text{whenever} \quad \quad \tilde F^s_0(x) \in A\left(r_2, (s+t)^{-1/8}/4\right).
\end{equation*}
\end{lemma}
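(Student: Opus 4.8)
The plan is to control the normal displacement $|F^s_t(x) - F^s_0(x)|$ by integrating the mean curvature along the flow, and to bound $|\vec H|$ in an annular region by combining the second fundamental form bounds we already have near $L^s$ (from (H4) and the curvature bound on $\Sigma$) with the decay estimate coming from the proximity to the pair of planes $P$. After rescaling by $(2(s+t))^{-1/2}$, the displacement $|\tilde F^s_0(x) - \tilde F^s_t(x)|$ must be shown to stay bounded by a fixed constant $K$, independent of $s$ and $t$, on the annulus $A(r_2, (s+t)^{-1/8}/4)$.

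First I would record the basic identity $\frac{d}{dt} F^s_t = \vec H_t$ and hence $|F^s_t(x) - F^s_0(x)| \le \int_0^t |\vec H_\tau|\,d\tau$, where $|\vec H| \le \sqrt{n}\,|A|$. The key is a uniform curvature estimate on $L^s_\tau$ in the relevant annular region: by Lemma~\ref{lem:proximity} the rescaled flow $\tilde L^s_\tau$ stays $C^1$-close to $P$ on $A(r_1, (s+t)^{-1/8})$, and by Lemma~\ref{lem:shorttime} (together with Lemma~\ref{lem:fforigin}) the Gaussian density ratios of $L^s_\tau$ stay below $1+\varepsilon_0$ at the appropriate scales. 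I would then invoke White's local regularity theorem (Theorem~1.3) — or rather the Ecker--Huisken style interior curvature estimate promised in the appendix — to convert the density bound into a pointwise bound $|A|(x,\tau) \le C/\sqrt{\tau + s}$ on $L^s_\tau$ in a neighbourhood of the annulus, using that the flow has no boundary there for the relevant time interval. This gives $|\vec H_\tau| \le C(\tau+s)^{-1/2}$, so that $|F^s_t(x) - F^s_0(x)| \le C\int_0^t (\tau+s)^{-1/2}\,d\tau = 2C(\sqrt{s+t} - \sqrt s) \le 2C\sqrt{s+t}$.

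Now I rescale: $|\tilde F^s_0(x) - \tilde F^s_t(x)| = (2(s+t))^{-1/2}|F^s_0(x) - F^s_t(x)| \le (2(s+t))^{-1/2}\cdot 2C\sqrt{s+t} = \sqrt 2\, C =: K$, a constant depending only on the structural data. One has to be slightly careful that $\tilde F^s_0(x)$ lying in $A(r_2, (s+t)^{-1/8}/4)$ forces the whole track $\{F^s_\tau(x) : \tau \in [0,t]\}$ to remain inside the larger annulus $A(r_1, (s+t)^{-1/8})\cdot\sqrt{2(s+t)}$ where the curvature estimate is valid; this is where $r_2 > r_1$ and the factor $1/4$ (against the $3(s+t)^{-1/8}$ of Lemma~\ref{lem:proximity}) are chosen — the displacement bound just obtained is itself what guarantees the track cannot escape, so the argument is a short continuity/bootstrap on the set of times for which the confinement holds. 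Choosing $r_2, \delta_3, s_3$ small (respectively large) enough to make all the previous lemmas applicable and to absorb the $C_1 e^{-|y_0|^2/C_1}$ error terms completes the proof.

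The main obstacle I expect is the curvature estimate step: one must ensure that the hypotheses of the local regularity theorem genuinely hold \emph{along the flow} $L^s_\tau$ for $\tau \in [0,t]$ in the annular region — in particular that the modified density ratios $\Theta^s_\tau(x_0,r)$ are controlled not just at $\tau = 0$ but uniformly in $\tau$, and at the right scales $r \sim \sqrt{\tau + s}$ — and that the boundary $\partial L^s_\tau$ (if $L^s$ is viewed with boundary after the surgery) stays far away, so that White's theorem applies cleanly. Reconciling the various annular scales $(s+t)^{-1/8}$ versus the fixed ball $B_4$ in which exactness holds, and making sure the constant $K$ does not secretly depend on how close $t$ is to $\delta_3$, is the bookkeeping that needs care.
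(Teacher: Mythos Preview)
Your proposal is correct and follows essentially the same approach as the paper: obtain density ratio control in the annulus from the proximity lemma, apply White's local regularity to bound $|\vec H|$, integrate to control $|F^s_t - F^s_0|$, and rescale. One minor point: White's theorem yields $|A|\le C/\sqrt{\tau}$, not the stronger $C/\sqrt{\tau+s}$ you claim, but since $\int_0^t \tau^{-1/2}\,d\tau = 2\sqrt{t}\le 2\sqrt{s+t}$ the final bound after rescaling is the same, and the paper indeed uses the $C/\sqrt{t}$ version.
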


\begin{proof}
By the proximity lemma \ref{lem:proximity} we may choose $r_2 \geq 1$, $\delta_3$ and $s_3$ such that if $t \leq \delta_3$ and $s \leq s_3$ then
\begin{equation*}
	\Theta^s_t(x,r) \leq 1 + \varepsilon
\end{equation*}
for all $r \leq 2\sqrt{2(s+t)}$ and $x \in A\left(r_2\sqrt{2(s+t)}, \sqrt{2}(s+t)^{3/8}\right)$. Hence by White's regularity theorem we can find a $C$ such that
\begin{equation*}
	\left|\frac{dF^s_t(p)}{dt}\right| = |\vec{H}| \leq \frac{C}{\sqrt{t}},
\end{equation*}
whenever $F^s_t(p) \in A\left(2r_2\sqrt{2(s+t)}, \sqrt{2(s+t)}(s+t)^{-1/8}/2\right)$. Therefore, choosing a larger $r_2$ and smaller $s_3$, $\delta_3$ if necessary we obtain by the fundamental theorem of calculus 
\begin{equation*}
	\left| F^s_t(p) - F^s_0(p)\right| \leq \int_0^t \frac{C}{\sqrt{s}}ds = 2C\sqrt{t}
\end{equation*}
whenever
\begin{equation*}
	F^s_0(p)\in A\left(r_2(2(s+t))^{1/2}, (2(s+t))^{1/2}(s+t)^{-1/8}/4\right)
\end{equation*}
which establishes the result.
\end{proof}

\begin{lemma} \label{lem:thetabetabound}
There are $\delta_4 > 0$ and $s_4 > 0$ such that for $0<s \leq s_4$ and $t<\delta_4$ 
\begin{equation}
	|A^s_t(x)| + |\theta^s_t(x)| + |\beta^s_t(x)| \leq D_4 \quad\quad \forall x \in L^s_t\cap A(1/3,3)
\end{equation}
\end{lemma}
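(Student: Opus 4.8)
The plan is to obtain the three bounds by combining the curvature estimate coming from White's local regularity theorem (via the density ratio bounds already established) with the evolution equations from Lemma \ref{lem:eveq} and the hypotheses (H2), (H3). First I would fix a slightly larger annulus, say $A(1/4,4)$, and apply Lemma \ref{lem:proximity} together with Lemma \ref{lem:normaldev} to choose $s_4,\delta_4$ small enough that for $s\le s_4$, $t<\delta_4$ the Gaussian density ratios $\Theta^s_t(x,r)\le 1+\varepsilon_0$ for all $x$ in a neighbourhood of $A(1/3,3)$ and all $r$ up to a fixed multiple of $\sqrt t$; note that because we are working in a fixed annulus bounded away from the origin, the scale $\sqrt t$ is comparable to a fixed constant for $t$ of order $\delta_4$, so White's theorem (applied to the unrescaled flow, or equivalently to $\tilde L^s_\cdot$ after scaling) gives $|A^s_t|\le C$ on $A(1/3,3)$, with $C$ independent of $s$ and $t$. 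This disposes of the first term.

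For the Lagrangian angle, I would use evolution equation (i), $\partial_t\theta^s_t=\Delta\theta^s_t$, i.e. $\theta$ satisfies the heat equation on the moving surface. Since $|\vec H|=|\nabla\theta|$ and we already control $|A^s_t|$ (hence $|\vec H|$) uniformly on the annulus, $|\nabla\theta^s_t|$ is uniformly bounded there; combined with the initial bound $|\theta^s(x)|\le D_2(|x|^2+1)\le C$ on $B_4$ from (H2) and the fact that the normal displacement stays bounded (Lemma \ref{lem:normaldev}, so points of $L^s_t\cap A(1/3,3)$ come from points of $L^s$ in a slightly larger annulus where the initial bound holds), a maximum-principle / integration-along-the-flow argument for the heat equation gives $|\theta^s_t|\le D_4$ on $A(1/3,3)$ for $t<\delta_4$. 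Concretely, $\sup_{L^s_t\cap A(1/4,4)}|\theta^s_t|$ can grow at most linearly in $t$ with rate controlled by the flux of $\theta$ through the boundary circles $\{|x|=1/4\}$ and $\{|x|=4\}$, and that flux is bounded by $\sup|\nabla\theta^s_t|\cdot\mathcal H^{n-1}(\text{boundary})$, which is uniformly controlled.

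The primitive $\beta^s_t$ is handled by evolution equation (ii), $\partial_t\beta^s_t=\Delta\beta^s_t-2\theta^s_t$, so $\beta$ satisfies an inhomogeneous heat equation with source term already bounded by the previous step. The same localized maximum-principle argument, now with the extra $-2\theta^s_t$ contributing a term of size $Ct$, together with the initial bound $|\beta^s(x)|\le D_2(|x|^2+1)\le C$ from (H2) and the normal-displacement control from Lemma \ref{lem:normaldev}, yields $|\beta^s_t|\le D_4$ on $A(1/3,3)$ for $t<\delta_4$. One subtlety here is that $\beta^s_t$ is only defined up to a time-dependent constant; I would fix the constant by, say, normalizing $\beta^s_0=\beta^s$ as given and then using the evolution equation to propagate, which is exactly the normalization used to derive (ii).

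The main obstacle is the maximum principle on the moving annular region: $A(1/3,3)$ is not flow-invariant, points enter and leave through the two boundary spheres, so one cannot apply the maximum principle naively. I expect the cleanest route is to multiply by the cut-off $\phi$ from Lemma \ref{lem:localmon} (rescaled to the annulus in question) and run the same Bochner-type computation as in that lemma, producing a differential inequality for $\int_{L^s_t}\phi\,(\theta^s_t)^2\rho\,d\mu$ and $\int_{L^s_t}\phi\,(\beta^s_t)^2\rho\,d\mu$ whose error terms are supported away from $A(1/3,3)$ and are controlled by the uniform curvature and density bounds; integrating in time and then using the curvature bound once more to upgrade the resulting $L^2$ control to an $L^\infty$ bound (interpolating against $|\nabla\theta^s_t|,|\nabla\beta^s_t|$, which are bounded since $\nabla\beta^s_t=Jx^\perp-2tJ\vec H$ and $\nabla\theta^s_t=-J\vec H$) gives the pointwise estimate. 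All constants produced this way depend only on $D_1,D_2,D_3$, $\Sigma$, $r_0$ and the previously fixed parameters, as required.
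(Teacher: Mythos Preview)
Your curvature argument has a genuine gap. White's local regularity theorem, in the form stated in the paper (and indeed in any form), yields $|A|\le C/\sqrt{t}$ from density-ratio control at scales $\rho\le\tau\sqrt t$; it does not give a bound that stays finite as $t\searrow 0$. Your sentence ``the scale $\sqrt t$ is comparable to a fixed constant for $t$ of order $\delta_4$'' is exactly where the argument breaks: the lemma must hold for all $t<\delta_4$, including arbitrarily small $t$, and $C/\sqrt t$ blows up there. The density-ratio lemmas you invoke (Lemmas \ref{lem:fforigin} and \ref{lem:proximity}) only control ratios at scales $r^2\lesssim t$, so they cannot be leveraged to beat the $1/\sqrt t$ rate. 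The paper instead exploits that on $A(1/4,4)$ the initial data $L^s$ are graphs with small gradient and bounded curvature (from (H4)), applies the graphical-persistence Lemma \ref{lem:graphicalest}, and then the Ecker--Huisken/Wang interior estimate (Corollary \ref{cor:curvatureest}), whose right-hand side involves the \emph{initial} curvature. That is what produces a uniform $|A^s_t|\le C$ down to $t=0$.

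Once this is fixed, the rest of your route is more elaborate than necessary. The paper bounds $\theta^s_t$ directly from graphicality: a Lagrangian graph with small gradient has Lagrangian angle close to that of the base plane, so persistence of small gradient (Lemma \ref{lem:graphicalest}) immediately gives $|\theta^s_t|\le C$ on the slightly smaller annulus---no heat-equation or cut-off argument needed. For $\beta^s_t$ the paper simply observes that $\partial_t\beta^s_t=\Delta\beta^s_t-2\theta^s_t=\langle Jx,\vec H\rangle-2\theta^s_t$ is pointwise bounded (since $|x|\le 3$, $|\vec H|\le |A|\le C$, and $|\theta^s_t|\le C$), and integrates in $t$ from the initial bound (H2). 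Your weighted-$L^2$ monotonicity plus $L^2\to L^\infty$ upgrade would work, but is unnecessary once you have the pointwise bound on $\partial_t\beta^s_t$.
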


\begin{proof}
The estimate is clearly true for $t = 0$ by assumption (H2). Moreover, by (H4) we can assume that for $s$ sufficiently small, each of the $L^s$ is the graph of a function with small gradient in the region $A(1/4, 4)$. Applying Lemma \ref{lem:graphicalest} we find that $L^s$ remains graphical with small gradient in $A(2/7,7/2)$ for some short time, which implies that $|\theta^s_t| \leq C$ for $\delta_4$ chosen small enough. \\
\indent That $|A^s_t|$ is bounded follows from Lemma \ref{lem:graphicalest} and Corollary \ref{cor:curvatureest}, since Lemma \ref{lem:graphicalest} implies small gradient for a short time, which allows use to apply Corollary \ref{cor:curvatureest} to get uniform curvature bounds for some short time in $A(1/3,3)$. \\
\indent Since $|\theta^s_t|$ and $|A^s_t|$ are both bounded, we have from the evolution equations of $\beta^s_t$ that
\begin{equation*}
	\left|\frac{d\beta^s_t}{dt}\right| \leq  \left|\langle Jx, \vec{H}\rangle\right| + 2|\theta^s_t| \leq C.
\end{equation*}
Hence for some suitable short time, $|\beta^s_t|$ also remains bounded in $A(1/3,3)$.
\end{proof}

The last of the technical lemmas in this section uses the monotonicity formula of Section \ref{sec:eveq} to show that after waiting for a short time dependent on $s$, we can find times at which the scaled flow $\tilde L^s_t$ is close to a self-expander in an $L^2$ sense. We later use this in the proof of the main theorem to get estimates on the density ratios via the stability result.

\begin{lemma} \label{lem:mon}
Let $a > 1$. Let $q_1$ be as given by Lemma \ref{lem:shorttime}, and set $q := q_1/a$. Then for all $\eta > 0$ and $R > 0$ there exist $\delta_5 > 0$, $s_5 > 0$ such that for all $s \leq s_5$ and $qs \leq T \leq \delta_5$ we have
\begin{equation*}
	\frac{1}{(a-1)T}\int_T^{aT}\int_{\tilde L^s_t\cap B_{R}}|\vec{H} - x^\perp|^2d\mathcal{H}^ndt \leq \eta.
\end{equation*}
\end{lemma}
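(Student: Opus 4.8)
The plan is to integrate the localized monotonicity formula of Lemma~\ref{lem:localmon} and then pass to the rescaled flow. Write $\tau = s+t$ and regard $(L^s_t)$ as a mean curvature flow parametrised by $\tau$ on $[s, s+T_s)$; it is exact in $B_3$ (since $L^s$ is exact in $B_4$ and exactness is preserved), so Lemma~\ref{lem:localmon} applies with $\alpha_\tau = \beta^s_{\tau - s} + 2\tau\theta^s_{\tau-s}$ and $\nabla\alpha_\tau = -J(2\tau\vec H - x^\perp)$ as in that proof. Under the parabolic rescaling $\tilde L^s_t = (2\tau)^{-1/2}L^s_t$ the cutoff $\phi$ becomes $\tilde\phi$, supported on $B_{3/\sqrt{2\tau}}$ and $\equiv 1$ on $B_{2/\sqrt{2\tau}}$; the potential $\alpha_\tau$ becomes $2\tau(\tilde\beta_\tau + \tilde\theta_\tau)$ with $\nabla(\tilde\beta_\tau + \tilde\theta_\tau) = -J(\vec H - x^\perp)$ on $\tilde L^s_t$; and $|2\tau\vec H - x^\perp|^2$ becomes $2\tau|\vec H - x^\perp|^2$, while the weight $\rho$ of Lemma~\ref{lem:localmon} becomes, up to a dimensional constant, the unit Gaussian $e^{-|x|^2/4}$. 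Thus, with $\sigma$ the rescaled time ($d\sigma/dt = (2\tau)^{-1}$) and
\begin{equation*}
	\mathcal G^s(t) := \int_{\tilde L^s_t}\tilde\phi\,(\tilde\beta_{s+t} + \tilde\theta_{s+t})^2\,e^{-|x|^2/4}\,d\mathcal H^n ,
\end{equation*}
Lemma~\ref{lem:localmon} rescales to $\frac{d}{d\sigma}\mathcal G^s \le -\int_{\tilde L^s_t}\tilde\phi\,|\vec H - x^\perp|^2\,e^{-|x|^2/4}\,d\mathcal H^n + \mathcal C^s$, where $\mathcal C^s$ collects the annular cutoff-error term $C\int_{L^s_t\cap(B_3\setminus B_2)}\alpha_\tau^2\rho\,d\mu$ from Lemma~\ref{lem:localmon}.

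Integrating this over the $\sigma$-interval corresponding to $t\in[0,aT]$, telescoping (the boundary term at the right endpoint is dropped), then undoing the time change (which costs $\frac{dt}{d\sigma} = 2\tau \le 2(s+aT)$) together with $\tilde\phi\equiv1$ and $e^{-|x|^2/4}\ge e^{-R^2/4}$ on $B_R$ (the former valid once $\delta_5$ is small enough that $2/\sqrt{2\tau}\ge R$), and using positivity of the integrand (so the integral over $[T,aT]$ is bounded by that over $[0,aT]$), I obtain
\begin{equation*}
	\frac{1}{(a-1)T}\int_T^{aT}\!\!\int_{\tilde L^s_t\cap B_R}\!|\vec H - x^\perp|^2\,d\mathcal H^n\,dt \le C(n)\,e^{R^2/4}\,\frac{s+aT}{(a-1)T}\,\Bigl(\mathcal G^s(0) + \mathcal E^s\Bigr),
\end{equation*}
where $\mathcal E^s := \int\mathcal C^s\,d\sigma$ is taken over the same $\sigma$-interval. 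The hypothesis $T\ge qs$ (hence the constant $q$, and $q_1$ of Lemma~\ref{lem:shorttime}) enters precisely here: it bounds $\frac{s+aT}{(a-1)T}$ by $\frac{a + 1/q}{a-1}$, a constant depending only on $a$ and $q$. So it remains to make $\mathcal G^s(0)$ and $\mathcal E^s$ as small as we wish by shrinking $\delta_5$ and $s_5$.

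The cutoff error $\mathcal E^s$ is harmless: on the original scale $B_3\setminus B_2\subset A(1/3,3)$, so Lemma~\ref{lem:thetabetabound} bounds $|\theta^s_t| + |\beta^s_t| + |A^s_t|$ there, whence $|\alpha_\tau|\le C$ and (with (H1)) the area of $L^s_t\cap(B_3\setminus B_2)$ is bounded; since $\rho \le C\tau^{-N}e^{-c/\tau}$ on $\{|x|\ge2\}$, the error density is $\le C\tau^{-N}e^{-c/\tau}$, and as $\int_0^{\delta}\tau^{-N-1}e^{-c/\tau}\,d\tau\to0$ as $\delta\searrow0$, taking $\delta_5$ small makes $\mathcal E^s$ small uniformly in $s\le s_5$. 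For $\mathcal G^s(0) = \int_{\tilde L^s}\tilde\phi\,(\tilde\beta^s + \tilde\theta^s)^2\,e^{-|x|^2/4}\,d\mathcal H^n$, note that on the support of $\tilde\phi$ the region outside $B_{r_1}$ is contained in the annulus where (H4) holds, so it suffices to estimate the integrals over $\tilde L^s\cap B_{r_1}$ and over $\tilde L^s\cap A(r_1, 3/\sqrt{2s})$. On $B_{r_1}$, (H3) gives $\tilde L^s\to\Sigma$ with $\tilde\beta^s + \tilde\theta^s\to\beta_\Sigma + \theta_\Sigma\equiv0$ uniformly, so with (H1) this contributes $o_s(1)$ for each fixed $r_1$. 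On the annulus, (H4) makes $\tilde L^s$ a graph over $P$ with $|\tilde v_s| + |x|\,|\nb\tilde v_s| \le D_3(\sqrt{2s}\,|x|^2 + e^{-b|x|^2})$, hence $|\nabla(\tilde\beta^s + \tilde\theta^s)| = |\vec H - x^\perp| \lesssim D_3(\sqrt{2s}\,|x|^2 + e^{-b|x|^2})$; integrating this radially from $|x| = r_1$, where $\tilde\beta^s + \tilde\theta^s$ is already $o_s(1)$ by the previous step, bounds $(\tilde\beta^s + \tilde\theta^s)^2 e^{-|x|^2/4}$ by an integrable quantity whose integral is $\le C(r_1^{-1}e^{-br_1^2})^2 + o_s(1)$. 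Choosing $r_1$ large and then $s_5$ small makes $\mathcal G^s(0)$ small, completing the proof.

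I expect the estimate of $\mathcal G^s(0)$ to be the crux. Because $\tilde\phi$ has support expanding like $B_{1/\sqrt s}$, hypothesis (H3) alone is not enough (it controls only fixed compacts), and the primitive $\tilde\beta^s$ genuinely grows along the ends, so one has to interleave (H3) on compacts with the graphical estimate (H4) on the intermediate annulus --- crucially using the identity $\nabla(\tilde\beta + \tilde\theta) = -J(\vec H - x^\perp)$ and the normalisation $\theta_\Sigma + \beta_\Sigma\equiv0$ for the model expander --- keeping careful track of which contributions vanish as $s\to0$ and which only become small after $r_1$ is fixed large. A secondary technical point is checking that $\rho$, $\phi$ and the Jacobians transform as claimed under the rescaling, with the correct powers of $2(s+t)$.
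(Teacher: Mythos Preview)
Your overall plan is right and matches the paper: apply the localized monotonicity formula of Lemma~\ref{lem:localmon}, integrate in time, bound the annular cutoff error via Lemma~\ref{lem:thetabetabound} and (H1), and reduce to an initial-data term controlled by (H3) on compacts together with the graphical estimates (H4) on the intermediate annulus via radial integration of $\nabla(\tilde\beta+\tilde\theta)=-J(\vec H - x^\perp)$. The identification of the role of $T\ge qs$ to bound $(s+aT)/T$ is also correct.

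The gap is in the step you flag as ``secondary''. Lemma~\ref{lem:localmon} is stated (and proved) for the backward heat kernel $\rho_{(0,t_0)}$ with a \emph{fixed} singular time $t_0$; under your parabolic rescaling $x\mapsto x/\sqrt{2(s+t)}$ the weight does not become the static Gaussian $e^{-|x|^2/4}$, because that would require $t_0-t=2(s+t)$, which is time-dependent. In other words, your functional $\mathcal G^s(t)$, written on the original scale, equals $(4\pi)^{n/2}(2\tau)^{-2}\int_{L^s_t}\phi\,\alpha_\tau^2\,\Phi(0,2\tau)\,d\mu$, and $\Phi(0,2\tau)$ is not of the form $\rho_{(0,t_0)}(\cdot,t)$ for any fixed $t_0$. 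So the inequality $\frac{d}{d\sigma}\mathcal G^s\le -\int\tilde\phi|\vec H-x^\perp|^2e^{-|x|^2/4}+\mathcal C^s$ does not follow from Lemma~\ref{lem:localmon} by change of variables; it would have to be derived from scratch for the normalized flow $\partial_\sigma F=\vec H-F^\perp$, and you have not done that.

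The paper avoids this by working entirely on the unrescaled flow with the specific choice $\rho_{0,T_0}$, $T_0=R^2(s+aT)+aT$, so that $T_0-t$ is uniformly comparable to $s+t$ for $t\in[0,aT]$; this makes $(T_0-t)^{n/2}\rho_{0,T_0}$ bounded below on $B_{R\sqrt{2(s+t)}}$ and lets the monotonicity produce the desired $|2(s+t)\vec H-x^\perp|^2$. Only the single initial-data integral is then rescaled, by the fixed factor $l=\sqrt{2(s+T)}$. A consequence is that the initial-data term depends on $\sigma:=s/(s+T)$ rather than on $s$ alone, and the paper handles this by a contradiction argument with two cases ($\sigma_i\to\sigma>0$ versus $\sigma_i\to0$); the second case is precisely the annular radial-integration estimate you sketch. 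If your rescaled monotonicity were valid, your $\mathcal G^s(0)$ would depend only on $s$ and the two-case dichotomy would be unnecessary --- so your route would buy a cleaner argument --- but as written the rescaling step is not justified.
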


\begin{proof}
Fix $R > 0$, $\eta > 0$. Suppose $s \leq s_5$ and $qs \leq T \leq \delta_5$, with $\delta_5$ and $s_5$ yet to be determined. Furthermore, we set $T_0 := R^2(s + aT) + aT$. Throughout the proof, we denote by $C$ a constant which depends on $a$, $R$ and $q$, but not on $T$ or $s$. We estimate
\begin{align}
	&\frac{1}{(a-1)T}\int_T^{aT}\int_{\tilde L^s_t\cap B_R}|\vec{H} - x^\perp|^2 d\mathcal{H}^ndt \nonumber\\
	&\quad\quad= \frac{1}{(a-1)T}\int_T^{aT}(2(s+t))^{-n/2 - 1}\int_{L^s_t\cap B_{R\sqrt{2(s+t)}}}|2(s+t)\vec{H} - x^\perp|^2d\mathcal{H}^ndt \label{eq:intest1}
\end{align}
Now supposing that $s_5$ and $\delta_5$ are small enough we can ensure that $R\sqrt{2(s+t)} \leq 2$. Moreover on $B_{R\sqrt{2(s+t)}}$ we have
\begin{align*}
	(T_0 - t)^{n/2}\rho_{0,T_0}(x,t) &= \frac{1}{(4\pi)^{n/2}}\exp\left(-\frac{|x|^2}{4(T_0 - t)}\right) \\
	&\geq \frac{1}{(4\pi)^{n/2}}\exp\left(-\frac{R^22(s+t)}{4(T_0 - t)}\right) \\
	&\geq \frac{1}{(4\pi)^{n/2}}\exp\left(-\frac{1}{2}\right)
\end{align*}
hence we continue estimating \eqref{eq:intest1} using the localized monotonicity formula of Lemma \ref{lem:localmon} ($\phi$ denotes the cut-off function given in that lemma which is $1$ on $B_2$ and $0$ outside of $B_3$)
\begin{align}
	\eqref{eq:intest1} &\leq \frac{C}{T}\int_T^{aT}(s+t)^{-(n+2)/2}(T_0 - t)^{n/2}\int_{L^s_t} \phi|2(s+t)\vec{H} - x^\perp|^2\rho_{0,T_0}d\mathcal{H}^ndt \nonumber\\
	&\leq \frac{C}{T}\int_T^{aT}(s+T)^{-(n+2)/2}(T_0 - T)^{n/2}\int_{L^s_t\cap A(2,3)}|\beta^s_t + 2(s+t)\theta^s_t|^2\rho_{0,T_0}d\mathcal{H}^ndt \nonumber\\
	&\quad+\frac{C}{T}(s+T)^{-(n+2)/2}(T_0-T)^{n/2}\int_{L^s_T}\phi|\beta^s_T + 2(s+T)\theta^s_T|^2\rho_{0,T_0}d\mathcal{H}^n \label{eq:intest2}
\end{align}
Now using the localized monotonicity a second time we have the estimate
\begin{equation*}
\frac{d}{dt}\int_{L^s_t}\phi|\beta^s_t + 2(s+t)\theta^s_t|^2\rho_{0,T_0}d\mathcal{H}^n \leq C\int_{L^s_t\cap A(2,3)}|\beta^s_t + 2(s+t)\theta^s_t|^2\rho_{0,T_0}d\mathcal{H}^n
\end{equation*}
so
\begin{align*}
	\int_{L^s_T}\phi|\beta^s_T+2(s+T)\theta^s_T|^2\rho_{0,T_0}d\mathcal{H}^n &\leq \int_{L^s_0}\phi|\beta^s_0+2s\theta^s_0|^2\rho_{0,T_0}d\mathcal{H}^n \\
	&\quad+ C\int_0^T\int_{L^s_t\cap A(2,3)}|\beta^s_t + 2(s+t)\theta^s_t|^2\rho_{0,T_0}d\mathcal{H}^ndt
\end{align*}
hence
\begin{align}
	\eqref{eq:intest2} &\leq \frac{C}{T}(s+T)^{-(n+2)/2}(T_0 - T)^{n/2}\int_{L^s_0}\phi|2s\theta^s_0 + \beta^s_0|^2\rho_{0,T_0}d\mathcal{H}^n \nonumber\\
	&\quad + \frac{C}{T}(s+T)^{-(n+2)/2}(T_0 - T)^{n/2}\int_0^{aT}\int_{L^s_t\cap A(2,3)}|2(s+t)\theta^s_t + \beta^s_t|^2\rho_{0,T_0}d\mathcal{H}^ndt. \label{eq:intest3}
\end{align}
Now $T_0 - T \leq C(s+T)$, with $C$ depending only on $R$ and $a$, so
\begin{align*}
	\eqref{eq:intest3} &\leq \frac{C}{T(s+T)}\int_{L^s_0}\phi|2s\theta^s_0 + \beta^s_0|^2\rho_{0,T_0}d\mathcal{H}^n \\
	&\quad + \frac{C}{T(s+T)}\int_0^{aT}\int_{L^s_t\cap A(2,3)}|2(s+t)\theta^s_t + \beta^s_t|^2\rho_{0,T_0}d\mathcal{H}^ndt\\
	&=: A + B
\end{align*}
We first estimate $B$, for which we make use of the estimate of Lemma \ref{lem:thetabetabound}
\begin{align}
	B &\leq \frac{C((s+aT) + 1)^2}{T(s+T)}\int_0^{aT}\int_{L^s_t\cap A(2,3)}\rho_{0,T_0}d\mathcal{H}^ndt \nonumber\\
	&\leq \frac{C((s+aT)+1)^2}{T(s+T)}\int_0^{aT}\int_{L^s_t\cap A(2,3)} |x|^4\rho_{0,T_0}d\mathcal{H}^ndt \nonumber\\
	&= \frac{C((s+aT)+1)^2}{T(s+T)}\int_0^{aT}(T_0 - t)^2\int_{(T_0 - t)^{-1/2}(L^s_t\cap A(2,3))} |x|^4\rho_{0,1}d\mathcal{H}^ndt \nonumber\\
	&\leq \frac{C((s+aT)+1)^2}{T(s+T)} T_0^3\sup_{t\in[0,aT]}\int_{(T_0 - t)^{-1/2}(L^s_t\cap A(2,3))} |x|^4\exp\left(-\frac{|x|^2}{4}\right) d\mathcal{H}^n \label{eq:intest4}
\end{align}
We note that $T_0 \leq (R^2(1/q + a) + a)T= CT$, $T_0 \leq C(s+T)$ and $T_0 \geq R^2(s + aT)$ so we can estimate
\begin{align*}
	\eqref{eq:intest4} &\leq C(T_0 + 1)^2T_0 \sup_{t\in[0,aT]}\int_{(T_0 - t)^{-1/2}L^s_t\cap A(2,3)} |x|^4\exp\left(-\frac{|x|^2}{4}\right)d\mathcal{H}^n \\
	&\leq C(T_0 + 1)^2T_0
\end{align*}
where we can estimate the supremum by a uniform constant because $L^s_t$ all have bounded area ratios with a uniform constant. Moreover $T_0 \leq R^2\delta_5(1/q + a) + a\delta_5$ so that by possibly decreasing $\delta_5$ we can ensure that $B \leq \eta/2$. \\
We next estimate $A$,
\begin{equation}
	A \leq \frac{C}{T(s+T)}\int_{L_0^s\cap B_3}|2s\theta^s_0 + \beta^s_0|^2\rho_{0,T_0}d\mathcal{H}^ndt \label{eq:Aest}
\end{equation}
First recall that if $\beta^s$ is primitive for the Liouville form on some $L^s$, then $\beta^s_l := l^{-2}\beta^s$ is primitive for the Liouville form on $l^{-1}L^s$. From here on we surpress the subscript $0$ of the $\beta^s$ and $\theta^s$ since we only ever integrate over the manifolds $L^s_0$, and we instead use a subscript $l$ to denote the rescaling factor of the $\beta^s$. We define
\begin{equation*}
	l := \sqrt{2(s+T)} \quad\quad \sigma := \frac{s}{s+T}
\end{equation*}
then
\begin{align*}
	\eqref{eq:Aest} &= \frac{C(s+T)}{T}\int_{l^{-1}(L^s_0\cap B_3)} |\sigma\theta^s + \beta^s_l|^2\rho_{0,l^{-2}T_0}d\mathcal{H}^ndt \\
	&\leq C \int_{l^{-1}(L^s_0\cap B_3)}|\sigma\theta^s + \beta^s_l|^2\rho_{0,l^{-2}T_0}d\mathcal{H}^n
\end{align*}
since $T \geq qs$, so we can absorb $(s+T)/T$ into the constant. Define
\begin{equation*}
	F(s,T) := \int_{l^{-1}(L^s_0\cap B_3)} |\sigma\theta^s + \beta^s_l|^2\rho_{0,l^{-2}T_0}d\mathcal{H}^n.
\end{equation*}
Notice that from the definition of $T_0$ we can find $C > 0$ independent of $T$ and $s$ such that $l^{-2}T_0 \in [C^{-1}, C]$. We want to show that by possibly again decreasing $s_5$ and $\delta_5$, we can ensure
\begin{equation*}
	F(s,T) \leq \eta/2.
\end{equation*}
Seeking a contradiction, suppose that this is not the case. Then we can find sequences $s_i$ and $T_i$ both converging to $0$ with $qs_i \leq T_i$ and such that
\begin{equation*}
	F(s_i,T_i) > \eta/2.
\end{equation*}
After possibly extracting a subsequence which we don't relabel, we may assume that $l_i^{-2}T_0 \rightarrow T_1$. We split the rest of the proof into two cases. \\
\textbf{Case 1:} Suppose that (after possibly extracting a further subsequence) we have that $\sigma_i \rightarrow \sigma > 0$. Then by (H3) we have
\begin{equation*}
	l^{-1}_iL^{s_i}_0 = \sigma_i^{1/2}\tilde L^{s_i}_0 \rightarrow \sigma^{1/2}\Sigma
\end{equation*}
in $C^{1,\alpha}$. Therefore we have
\begin{align*}
	\lim_{i\rightarrow \infty} F(T_i, s_i) &= \lim_{i\rightarrow \infty}\int_{\sigma_i^{1/2}\tilde L^{s_i}_0\cap l_i^{-1}B_3}|\sigma_i\theta^{s_i} + \beta^{s_i}_{l_i}|^2\rho_{0,l_i^{-2}T_0}d\mathcal{H}^n \\
	&= \lim_{i\rightarrow\infty} \sigma_i^2 \int_{\tilde L^{s_i}_0\cap (2s_i)^{-1/2} B_3} |\tilde \theta^{s_i} + \tilde \beta^{s_i}|^2\rho_{0,l_i^{-2}\sigma_i^{-1}T_0}d\mathcal{H}^n = 0 \\
\end{align*}
because $|\tilde \theta^{s_i} + \tilde \beta^{s_i}|$ is bounded by $D_2(1 + |x|^2)$ on $B_{3(2s_i)^{-1/2}}$, which means that since $l_i^{-2}\sigma_i^{-1}T_0 \rightarrow \sigma^{-1}T_1 > 0$ the contribution to the integral outside some fixed large ball is small uniformly in $i$. Moreover by (H3) we have $\lim_{i\rightarrow \infty}|\tilde \theta^{s_i} + \tilde \beta^{s_i}|^2 = 0$ locally, so inside this large ball the integral can be made as small as desired.\\
\textbf{Case 2:} Suppose now that, again after possibly passing to a not relabelled subsequence, $\sigma_i \rightarrow 0$. Then
\begin{align*}
	\lim_{i\rightarrow\infty}\int_{l_i^{-1}(L^{s_i}_0\cap B_{r_0\sqrt{s_i}})}&|\sigma_i\theta^{s_i} + \beta^{s_i}_{l_i}|^2\rho_{0,l_i^{-2}T_0}d\mathcal{H}^n\\ &= \lim_{i\rightarrow\infty}\int_{\sigma_i^{1/2}\tilde L^{s_i}_0\cap B_{r_0\sqrt{\sigma_i/2}}} |\sigma_i\theta^{s_i} + \beta^{s_i}_{l_i}|^2\rho_{0,l_i^{-2}T_0}d\mathcal{H}^n \\
	&= \lim_{i\rightarrow \infty}\sigma_i^2\int_{\tilde L^{s_i}_0\cap B_{r_0/\sqrt{2}}} |\tilde \theta^{s_i} + \tilde \beta^{s_i}|^2 \rho_{0,\sigma_i^{-1}l_i^{-1}T_0}d\mathcal{H}^n = 0
\end{align*}
because $|\tilde \theta^{s_i} + \tilde \beta^{s_i}|^2 \rightarrow 0$ locally, and $\rho$ is bounded. So to estimate $\lim_{i\rightarrow\infty}F(T_i,s_i)$ we need only control the integral in the annulus $A(r_0\sqrt{\sigma_i/2},3l_i^{-1})$. We first notice that by (H4), provided $i$ is large enough, $l_i^{-1}L^{s_i}\cap A(r_0\sqrt{\sigma_i/2}, 3l_i^{-1})$ is graphical over $P$, and if $v_i$ is the function arising from this decomposition we have the estimate
\begin{equation*}
	|v_i(x')| + |x'||\nb v_i(x')| + |x'|^2|\nb^2v_i(x')| \leq D_3\left(l_i|x'|^2 + \sigma_i^{1/2} e^{-b|x'|^2/2\sigma_i}\right).
\end{equation*}
In the graphical region, the normal space to the graph is spanned by the vectors $n_j := (-\nb v^j_i, e_j)$ for $j = 1,\dots,n$ where $e_j$ denotes the vector in $\mathbb{R}^n$ whose $j$th entry is $1$, and all other entries are $0$, and $v^j_i$ is the $j$th coordinate of $v_i$. Then given an orthonormal basis for the normal space $\nu_1,\dots,\nu_n$ we have $\nu_j = \sum_{k=1}^n\alpha_{jk}n_k$ so it follows that
\begin{equation*}
	|x^\perp| \leq C\sum_{j=1}^n |\langle x, n_j\rangle|
\end{equation*}
where $C$ depends only on the $\alpha_{jk}$. Now
\begin{align*}
	\langle x, n_j\rangle &= \langle(x', v_i(x')), (-\nb v^j_i, e_j)\rangle \\
	&= -\langle x', \nb v^j_i(x')\rangle + v^j_i(x')
\end{align*}
from which it follows that
\begin{equation*}
	|x^\perp| \leq C\left(|v_i(x')| + |x'||\nb v_i(x')|\right).
\end{equation*}
Therefore
\begin{equation}
	|\nabla \beta^{s_i}_{l_i}| = |x^\perp| \leq C\left(l_i|x'|^2 + \sigma_i^{1/2}\right)\label{eq:betaest}.
\end{equation}
Using this estimate we can control $\beta^{s_i}_{l_i}$ independently of $i$ on the annular region $A(r_0\sqrt{\sigma_i/2}, 3l_i^{-1}) \cap l_i^{-1}L^{s_i}$. Indeed suppose that $x \in A(r_0\sqrt{\sigma_i/2}, 3l_i^{-1}) \cap l_i^{-1} L^{s_i}$, then there is a corresponding $x' \in A(r_0\sqrt{\sigma_i/2}, 3l_i^{-1}) \cap P$ such that $x = x' + v_i(x')$. Define 
\begin{equation*}
	x_i' := \frac{r_0\sqrt{\sigma_i}}{\sqrt{2}|x'|} x' \quad \text{ and }\quad x_i := x_i' + v_i(x_i').
\end{equation*}
Note that $x_i$ of course depends on the original choice of $x$ as well as $i$. We may now define a curve in $\tilde L^{s_i}$ by setting
\begin{equation*}
	\gamma(t) := x_i' + t(x' - x_i') + v_i(x_i' + t(x' - x_i')).
\end{equation*}
By the fundamental theorem of calculus we can write
\begin{align*}
	\beta^{s_i}_{l_i}(x) &= \beta^{s_i}_{l_i}(x_i) + \int_0^1 \frac{d}{dt}\beta^{s_i}_{l_i}(\gamma(t))dt \\
	&\leq \beta^{s_i}_{l_i}(x_i) + \int_0^1|\nabla \beta^{s_i}_{l_i}(\gamma(t))||\gamma'(t)|dt
\end{align*}
now
\begin{equation*}
	|\gamma'(t)| \leq |x' - x_i'| + |\nb v_i||x' - x_i'| \leq C|x|
\end{equation*}
so
\begin{align*}
	\beta^{s_i}_{l_i}(x) &\leq \beta^{s_i}_{l_i}(x_i) + C|x|\int^1_0 l_i|x_i' + t(x' - x_i')|^2 + \sigma_i^{1/2}dt \\
	&\leq \beta^{s_i}_{l_i}(x_i) + C(l_i|x|^3 + \sigma_i^{1/2}).
\end{align*}
Now $\beta^{s_i}_{l_i}(x_i) = \sigma_i\tilde\beta^{s_i}(\sigma^{1/2}_ix_i)$, moreover since $|x_i|$ is bounded independently of $i$ or the original choice of $|x|$ we have
\begin{equation*}
	\lim_{i\rightarrow \infty} \tilde\beta^{s_i}(\sigma_i^{1/2}x_i) + \tilde \theta^{s_i}(\sigma_i^{1/2}x_i) = 0
\end{equation*}
uniformly in $x$. Thus
\begin{equation*}
	\lim_{i\rightarrow \infty} \beta^{s_i}_{l_i}(x_i) = -\lim_{i\rightarrow \infty}\sigma_i\tilde\theta^{s_i}(\sigma_i^{1/2}x_i) = 0
\end{equation*}
uniformly in $x$ as $\tilde\theta^{s_i}$ is bounded and $\sigma_i \rightarrow 0$. Therefore we may bound the term $\beta^{s_i}_{l_i}(x_i)$ by some sequence $b_i$ with $b_i \rightarrow 0$. Consequently we have the estimate
\begin{equation*}
	|\beta^{s_i}_{l_i}(x)| \leq C\left(l_i|x|^3 + \sigma_i^{1/2}|x|\right) + b_i
\end{equation*}
on $A(r_0\sqrt{\sigma_i/2}, 3l_i^{-1}) \cap l_i^{-1}$, hence
\begin{align*}
	\lim_{i\rightarrow\infty} F(T_i,s_i) &= \lim_{i\rightarrow\infty} \int_{l_i^{-1}L^{s_i}\cap A(r_0\sqrt{\sigma/2}, 3l_i^{-1})} |\sigma_i\theta^{s_i} + \beta^{s_i}_{l_i}|^2\rho_{0,l_i^{-2}T_0}d\mathcal{H}^n \\
	&= \lim_{i\rightarrow\infty}\int_{l_i^{-1}L^{s_i}\cap A(r_0\sqrt{\sigma/2}, 3l_i^{-1})} |\beta^{s_i}_{l_i}|^2\rho_{0,l_i^{-2}T_0}d\mathcal{H}^n \\
	&\leq \lim_{i\rightarrow \infty} C(l_i^2 + \sigma_i + b_i^2)\int_{l_i^{-1}L^{s_i}}(|x|^6 + |x|^2 + 1)\rho_{0,l_i^{-2}T_0}d\mathcal{H}^n = 0,
\end{align*}
where we again used the fact that $l_i^{-2}T_0 \rightarrow T_1 > 0$, so that outside of some large ball the contribution to the integral is very small. This limit being zero is a contradiction, so we are done. 
\end{proof}

We may now embark on the proof of Theorem \ref{thm:mainthm}. Changing scale, to prove the main theorem it would in fact suffice to show the following (which is very slightly stronger due to the bound on the scale of the density ratios)
\begin{theorem*}[Rescaled main theorem]
There exist $s_0$, $\delta_0$ and $\tau$ such that if $t \leq \delta_0$, $r^2 \leq \tau$ and $s \leq s_0$, then
\begin{equation*}
	\tilde \Theta^s_t(x_0, r) \leq 1 + \varepsilon_0
\end{equation*}
for all $x_0$ with $|x_0| \leq (2(s+t))^{-1/2}$. 
\end{theorem*}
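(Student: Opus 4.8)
The plan is to run a continuity argument in $t$. Shrinking $\varepsilon_0$ if necessary so that it lies below the dimensional constant $\varepsilon_0(n,k)$ of White's local regularity theorem, fix $s$ and let $T_s$ be the supremum of those $t\le\delta_0$ for which $\tilde\Theta^s_{t'}(x_0,r)\le 1+\varepsilon_0$ holds for all $t'\le t$, all $r^2\le\tau$, and all $x_0$ with $|x_0|\le(2(s+t'))^{-1/2}$. Using the short-time estimate (Lemma \ref{lem:shorttime}) and the identity $\tilde\Theta^s_t(x_0,r)=\Theta^s_t(\sqrt{2(s+t)}\,x_0,\sqrt{2(s+t)}\,r)$, this set is non-empty with $T_s\ge q_1 s$ once $\tau$ is small compared with $q_1$; since the flow remains smooth on $[0,T_s]$ (away from the origin $L^s_t$ is uniformly smooth, and near the origin the density bound feeds White's theorem to give interior curvature control and hence extendability, so extendability is itself part of the bootstrap) the supremum is attained. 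The goal is to show $T_s=\delta_0$ for every $s\le s_0$, so suppose for contradiction that $q_1 s\le T_s<\delta_0$.

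It suffices to improve the estimate at time $T_s$ to $\tilde\Theta^s_{T_s}(x_0,r)\le 1+\varepsilon_0/2$ for all $r^2\le\tau$ and $|x_0|\le(2(s+T_s))^{-1/2}$: since the density ratios are continuous in $t$ and the admissible set of $x_0$ only shrinks as $t$ increases, this forces $\le 1+\varepsilon_0$ to persist a definite amount past $T_s$, contradicting maximality. For $x_0$ in the outer range $K_1\le|x_0|\le(2(s+T_s))^{-1/2}$ — where $K_1$ is a fixed constant, so that $\sqrt{2(s+T_s)}\,x_0$ stays in a fixed annulus (using $T_s\ge q_1 s$) — this is exactly Lemma \ref{lem:fforigin} applied with the constant $\varepsilon_0/2$, whose scale hypothesis $r^2\le t$ becomes $r^2\lesssim q_1$ and so holds for $\tau$ small. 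The substance is the near region $|x_0|\le K_1$.

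Here I would fix $a>1$ close to $1$, apply the monotonicity lemma (Lemma \ref{lem:mon}) with $T=T_s/a$ and $R=\tilde R$, where $\tilde R$ is the radius produced by the stability theorem, to extract a time $t^*\in[T_s/a,T_s)$ at which $\int_{\tilde L^s_{t^*}\cap B_{\tilde R}}|\vec H-x^\perp|^2\,d\mathcal H^n$ is below the constant $\eta$ of Theorem \ref{thm:stability}, and then check the remaining hypotheses of that theorem for $\tilde L^s_{t^*}$: (i) the uniform curvature bound on $\tilde L^s_{t^*}\cap B_{\tilde R}$ comes from the bootstrap density bound on $[0,T_s]$ via White's theorem, applied with balls of radius comparable to $\sqrt{T_s}$ about the points of the small ball $B_{\tilde R\sqrt{2(s+t^*)}}(0)$ and then rescaled, so that $|A^{L^s_{t^*}}|\le C/\sqrt{t^*}$ together with $t^*\ge q_1 s/a$ yields a bound on $B_{\tilde R}$ independent of $s$; (ii) the density estimate at $t^*$ at all points and scales $\le\tau$ follows from the bootstrap hypothesis when $|x_0|\le(2(s+t^*))^{-1/2}$ and, otherwise, from the uniform smoothness of $L^s_{t^*}$ away from the singularities together with Lemma \ref{lem:fforigin}; (iii) is Lemma \ref{lem:mon} itself; and (iv) is the proximity estimate of Lemma \ref{lem:proximity} combined with (H4) and the normal-deviation bound of Lemma \ref{lem:normaldev}, which keeps $\tilde L^s_{t^*}$ graphical over $P$ in the annulus. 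Theorem \ref{thm:stability} then gives that $\tilde L^s_{t^*}$ is $\varepsilon'$-close to $\Sigma$ in $C^{1,\alpha}(\overline{B}_{\tilde R})$ for a prescribed small $\varepsilon'$, so in particular $|A|\le M$ on $B_{\tilde R}$ and White's theorem keeps $|A^{\tilde L^s_t}|\le M$ there for $t\in[t^*,T_s]$; as $(\tilde L^s_t)$ moves with normal speed $(2(s+t))^{-1}|\vec H^{\tilde L^s_t}-x^\perp|\le(2(s+t))^{-1}(M+\tilde R)$ on $B_{\tilde R}$, its displacement over $[t^*,T_s]$ is at most $(M+\tilde R)\ln\frac{s+T_s}{s+t^*}\le(M+\tilde R)\ln a$, which is small for $a$ near $1$; hence $\tilde L^s_{T_s}$ is $2\varepsilon'$-close to $\Sigma$ in $C^{1,\alpha}(\overline{B}_{\tilde R/2})$. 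Finally, $\Sigma$ being a fixed smooth submanifold with bounded second fundamental form and bounded area ratios, one has $\sup_{x_0,\,r^2\le\tau}\int_\Sigma\Phi(x_0,r^2)\,d\mathcal H^n\to1$ as $\tau\to0$ — a graphical estimate away from the origin, while at the origin of $\Sigma$ the quantity is, by self-similarity of $(\sqrt{2t}\,\Sigma)$ and Huisken monotonicity, increasing in the scale from $1$ and bounded above by the Gaussian density $2$ of $P$ — so for $\varepsilon'$ and $\tau$ small the density ratios of $\tilde L^s_{T_s}$ at scales $\le\tau$ and points of $B_{\tilde R/4}\supseteq B_{K_1}$ are $\le 1+\varepsilon_0/2$, which completes the improvement and hence the argument.

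The main obstacle is the order in which the constants must be fixed: first $a$ close to $1$ and $\varepsilon'$, then the output $\tilde R,\eta,\nu$ of the stability theorem, then $\tau$ small enough both for the scale hypothesis of White's theorem and to make the density ratios of $\Sigma$ close to $1$, then $R=4K_1$ and a second application of the stability theorem, and only then $\delta_0,s_0$ small enough that every error term above is genuinely uniform in $s$ and that one ratio-$a$ step of propagation costs strictly less than $\varepsilon_0/2$. The genuinely delicate point is hypothesis (i): White's theorem is naturally tied to a fixed time scale, and extracting from it a curvature bound uniform in $s$ on the whole rescaled ball $B_{\tilde R}$ rests on the fact that once $t\gtrsim s$ the relevant length scale is $\sqrt{s+t}$ rather than $\sqrt s$, with (H3) available for the short-time regime if needed.
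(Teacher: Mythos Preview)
Your continuity argument and the verification of the stability hypotheses (curvature via White, proximity via Lemma~\ref{lem:proximity}, small expander energy via Lemma~\ref{lem:mon}) match the paper's. The difference is in how you close the loop once you have a time $t^*\in[T_s/a,T_s)$ with $\tilde L^s_{t^*}$ $\varepsilon$-close to $\Sigma$ in $C^{1,\alpha}(\overline B_{\tilde R})$.

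You try to propagate $C^{1,\alpha}$-closeness forward to $T_s$ via a normal-displacement bound and then compare the density ratios of $\tilde L^s_{T_s}$ to those of $\Sigma$ at scales $\le\sqrt\tau$. That step is not correct as written: a $C^0$ displacement bound together with a uniform curvature bound yields $C^{1,\alpha}$-closeness only after interpolation, and the resulting closeness is of order $(\text{displacement})^{(1-\alpha)/2}$ times a curvature-dependent factor, not ``$2\varepsilon'$''. This is repairable, but it forces exactly the constant-ordering gymnastics you flag at the end (choose $\varepsilon'$, then $\tilde R(\varepsilon')$, then $a$ with $\tilde R\ln a$ small enough for the interpolation, then $\tau$ small enough that $\Sigma$'s own density ratios are already within $\varepsilon_0/4$ of $1$, \dots).

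The paper avoids all of this. After the stability step it runs \emph{ordinary} mean curvature flow $\hat L^s_l$ from initial data $\tilde L^s_{t_1}$ and applies Lemma~\ref{lem:c1alphaclose} directly: $C^{1,\alpha}$-closeness to a fixed smooth $\Sigma$ gives $\hat\Theta^s_l(x,r)\le1+\varepsilon_0$ for all $r^2,\,l\le q_1$ and $|x|\le\tilde R-1$. The explicit scaling identity $\hat L^s_l=\sqrt{1+2l}\,\tilde L^s_{t_1+2(s+t_1)l}$ then converts this into $\tilde\Theta^s_t(x,r)\le1+\varepsilon_0$ for $r^2\le q_1/(1+2q_1)=:\tau$, $|x|\le K_0$, and all $t$ up to $(1+2q_1)t_1+2q_1s>aT=T_s$, provided only that $a<1+2\tau$. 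Thus $\tau$ is fixed explicitly from the outset, $a$ need \emph{not} be close to $1$, no improvement to $\varepsilon_0/2$ is required, and the contradiction is immediate. The ``delicate'' curvature bound (i) you worry about is in the paper just the algebraic manipulation $|A^s_t|\le C/\sqrt{t-T}\le\tilde C/\sqrt{2(s+t)}$ for $t\in[t_0,T_s)$, using only $t_0\ge T\ge qs$.
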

If we set $\tau := q_1/(2(q_1 + 1))$ where $q_1$ is as in the short-time existence lemma, then the rescaled version of the same lemma tells us that
\begin{lemma*}[Rescaled short-time existence]
If $s \leq s_1$, $t \leq q_1 s$ and $r^2 \leq \tau$ then
\begin{equation*}
	\tilde \Theta^s_t(y_0, r) \leq 1 + \varepsilon_0
\end{equation*}
$|y_0| \leq (2(s+t))^{-1/2}$.
\end{lemma*}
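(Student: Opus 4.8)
The plan is to deduce the statement directly from Lemma~\ref{lem:shorttime} via the scaling identity
\[
	\Theta^s_t(x_0,r) = \tilde\Theta^s_t\left(\frac{x_0}{\sqrt{2(s+t)}},\frac{r}{\sqrt{2(s+t)}}\right)
\]
established earlier in the section. Reading this backwards, for a given $y_0$ and $r$ I would set $x_0 := \sqrt{2(s+t)}\,y_0$ and $R := \sqrt{2(s+t)}\,r$, so that $\tilde\Theta^s_t(y_0,r) = \Theta^s_t(x_0,R)$, and it then suffices to check that the triple $(x_0,R,t)$ lies in the range of applicability of Lemma~\ref{lem:shorttime}.

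First I would check the spatial constraint: the hypothesis $|y_0| \leq (2(s+t))^{-1/2}$ gives $|x_0| = \sqrt{2(s+t)}\,|y_0| \leq 1$, so $x_0 \in B_1$ as required. Next, the time constraint $t \leq q_1 s$ is assumed outright. The only point requiring care is the scale constraint $R^2 \leq q_1 s$: using $t \leq q_1 s$ we have $s + t \leq (1+q_1)s$, hence
\[
	R^2 = 2(s+t)\,r^2 \leq 2(1+q_1)s\,\tau = 2(1+q_1)s\cdot\frac{q_1}{2(q_1+1)} = q_1 s,
\]
where the final equality is exactly the reason $\tau$ was defined to be $q_1/(2(q_1+1))$. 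With all three hypotheses verified, Lemma~\ref{lem:shorttime} applied at centre $x_0$ and scale $R$ yields $\Theta^s_t(x_0,R) \leq 1+\varepsilon_0$, which is precisely the asserted bound $\tilde\Theta^s_t(y_0,r) \leq 1+\varepsilon_0$.

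There is no genuine obstacle here — the content is entirely bookkeeping with the dilation $L^s_t \mapsto \tilde L^s_t = L^s_t/\sqrt{2(s+t)}$. The single thing to watch is that the admissible scale for the unrescaled density ratios in Lemma~\ref{lem:shorttime} shrinks like $\sqrt{s}$, which is what forces the restriction $r^2 \leq \tau$ (rather than, say, $r \leq 2$) in the rescaled statement; conversely the gain is that the rescaled centre $y_0$ is now allowed to range over the large ball of radius $(2(s+t))^{-1/2}$. This is also why $\tau$ is fixed once and for all at this point of the argument: the same $\tau$ is the one appearing in the statement of Theorem~\ref{thm:mainthm}.
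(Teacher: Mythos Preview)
Your proof is correct and is exactly the rescaling argument the paper has in mind: the paper simply asserts that this lemma is ``the rescaled version'' of Lemma~\ref{lem:shorttime} after fixing $\tau := q_1/(2(q_1+1))$, and you have spelled out precisely the bookkeeping that justifies this.
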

and similarly the rescaled far from the origin estimate tells us that
\begin{lemma*}[Rescaled far from origin]
If $r^2 \leq \tau$ and $q_1s \leq t \leq \delta_1$
\begin{equation*}
	\tilde \Theta^s_t(y_0, r) \leq 1 + \varepsilon_0
\end{equation*}
whenever $K_0 \leq |y_0| \leq (2(s+t))^{-1/2}$.
\end{lemma*}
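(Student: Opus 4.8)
The plan is to obtain this as an immediate rescaling of Lemma~\ref{lem:fforigin}, so that the only real work is to track how the scale constraint $r^2 \leq \tau$ transforms under the dilation. Recall the identity, noted above, that for any $y_0$ and $r$
\begin{equation*}
	\tilde\Theta^s_t(y_0,r) = \Theta^s_t\bigl(\sqrt{2(s+t)}\,y_0,\ \sqrt{2(s+t)}\,r\bigr).
\end{equation*}
The first step is therefore to set $x_0 := \sqrt{2(s+t)}\,y_0$ and $r' := \sqrt{2(s+t)}\,r$, with the aim of applying Lemma~\ref{lem:fforigin} at time $t$ with centre $x_0$ and scale $r'$. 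This amounts to verifying the two hypotheses of that lemma, namely the scale condition $r'^2 \leq t \leq \delta_1$ and the annulus condition $x_0 \in A(K_0\sqrt{2t},1)$.

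The second step is the scale check, and this is exactly where the choice $\tau = q_1/(2(q_1+1))$ enters. Since $q_1 s \leq t$ we have $s + t \leq \tfrac{q_1+1}{q_1}\,t$, and hence
\begin{equation*}
	r'^2 = 2(s+t)\,r^2 \leq 2\cdot\tfrac{q_1+1}{q_1}\,t\cdot\tau = t,
\end{equation*}
while $t \leq \delta_1$ holds by hypothesis. The third step is the annulus check: from $|y_0| \leq (2(s+t))^{-1/2}$ we get $|x_0| = \sqrt{2(s+t)}\,|y_0| \leq 1$, and from $|y_0| \geq K_0$ together with $s \geq 0$ we get $|x_0| \geq \sqrt{2(s+t)}\,K_0 \geq \sqrt{2t}\,K_0$, so indeed $x_0 \in A(K_0\sqrt{2t},1)$. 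Finally, Lemma~\ref{lem:fforigin} yields $\Theta^s_t(x_0,r') \leq 1 + \varepsilon_0$, which by the scaling identity is precisely the assertion $\tilde\Theta^s_t(y_0,r) \leq 1 + \varepsilon_0$.

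I do not expect any genuine obstacle here: the argument is purely formal once Lemma~\ref{lem:fforigin} is available. The one place where a slip would be easy is the scale bookkeeping in the second step — one must observe that $\tau$ has been fixed precisely so that, uniformly over the relevant range $q_1 s \leq t \leq \delta_1$ (equivalently $s/(s+t) \leq 1/(q_1+1)$), the bound $r^2 \leq \tau$ on the rescaled scale is converted into the bound $r'^2 \leq t$ required by the unrescaled lemma. With that observation in place, the rest is immediate, and the same template also produces the rescaled short-time estimate from Lemma~\ref{lem:shorttime}.
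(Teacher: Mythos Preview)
Your proof is correct and is exactly the rescaling argument the paper has in mind: the paper states this lemma without proof, treating it as an immediate consequence of Lemma~\ref{lem:fforigin} under the dilation relating $\Theta^s_t$ and $\tilde\Theta^s_t$, and your write-up simply makes that rescaling explicit, including the key observation that the choice $\tau = q_1/(2(q_1+1))$ is precisely what converts $r^2 \leq \tau$ into $r'^2 \leq t$ once $t \geq q_1 s$.
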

Thus to prove the rescaled main theorem, it suffices to show that for appropriately chosen $s_0, \delta_0$ and $\tau$ the following holds true: if $r^2 \leq \tau$, $s \leq s_0$, $t \leq \delta_0$ and $t \geq q_1 s$ then
\begin{equation*}
	\tilde \Theta^s_t(y_0, r) \leq 1 + \varepsilon_0
\end{equation*}
whenever $|y_0| \leq K_0$. This is what we now show.

\begin{proof}[Proof of Theorem \ref{thm:mainthm}]
Define for each
\begin{equation*}
	T_s := \sup\left\{ T | \tilde \Theta^s_t(y_0, r) \leq 1 + \varepsilon_0 \:\: \forall r^2 \leq \tau, t \leq T, |y_0| \leq K_0 \right\}.
\end{equation*}
We now claim that we can find $\delta_0 > 0$ and $s_0 > 0$ such that $T_s \geq \delta_0$ for all $s \leq s_0$. Indeed, with $\tau$ defined as before, we choose $a > 1$ with $a < (1 + 2\tau)$. Let $C$ be the constant of Brian White's local regularity theorem, and set 
\begin{equation*}
	\tilde C := C\frac{\sqrt{2(a+3)}}{\sqrt{q_1(a-1)}}.
\end{equation*}
We next let $r_3 := \max\{r_0, r_1, r_2, 1\}$, where $r_0$, $r_1$, and $r_2$ are as in, respectively, the construction of the approximating family, Lemma \ref{lem:proximity}, and Lemma \ref{lem:normaldev}. Let $R := \sqrt{1+ 2q_1}K_0 + r_3$, and $\varepsilon = \varepsilon(\Sigma, \varepsilon_0, \alpha)$ as given by Lemma \ref{lem:c1alphaclose}. We apply the stability result, Theorem \ref{thm:stability} with $R = R$; $r = r_3$; $C = \max\{C_1, C\}$ the constants from Lemma \ref{lem:proximity}, and the construction of the approximating family respectively; $M = \tilde C$; $\tau = \tau$; $\Sigma = \Sigma$ and $\varepsilon = \varepsilon$. Thus we obtain $\tilde R \geq R$, $\eta > 0$ and $\nu \geq 0$ as in the theorem. Apply Lemma \ref{lem:mon} with $\eta = \eta/2$ and $R = \tilde R$. This gives $s_5$ and $\delta_5$ such that the lemma holds. Next apply Lemma \ref{lem:proximity} with $\nu$ to obtain $s_2$ and $\delta_2$. We now let $s_0 := \min\{s_1,s_2,s_3,s_4,s_5\}$ and $\delta_0 := \min\{\delta_1, \delta_2, \delta_3, \delta_4, \delta_5\}$. We finally possibly decrease $s_0$ and $\delta_0$ slightly to ensure that
\begin{equation*}
	(s_0 + \delta_0)^{-1/8} \geq 2\tilde R.
\end{equation*}
This will ensure that in the annular region $A(r_3, \tilde R)$ we have all of the estimates of the intermediate lemmas of this section. We now claim that these $s_0$ and $\delta_0$ are the required constants. Specifically we claim that for all $s \leq s_0$ we have $T_s \geq \delta_0$. Indeed, suppose that this were not the case and that for some $s \leq s_0$ we have $T_s < \delta_0$. Define $T := T_s/a$, then since $T < T_s$ we have for all $t \in [T, T_s)$
\begin{equation*}
	\tilde \Theta^s_t(x,r) \leq 1 + \varepsilon_0
\end{equation*}
for all $r^2 \leq \tau$ and $x \in B_{K_0}$. In fact, as has already been observed, the same is true for all $|x| \leq (s+t)^{-1/8}$, so in particular for all $|x| \leq 2\tilde R$. Let $\hat L^s_l$ denote the Lagrangian mean curvature flow with initial condition $\tilde L^s_T$. Then it is easy to verify that
\begin{equation*}
	\hat L^s_l = \sqrt{1 + 2l}\tilde L^s_{T + \sigma^2l},
\end{equation*}
where $\sigma^2 = 2(s + T)$. This implies the density ratio control
\begin{equation*}
	\hat\Theta^s_l(x,r) \leq 1 + \varepsilon_0,
\end{equation*}
for all $l$ such that $T + \sigma^2l \in [T, T_s)$, $r^2 \leq \tau$ and $x \in B_{2\tilde R}$. By the local regularity theorem of Brian White this means we get curvature bounds of the form
\begin{equation*}
	|\hat A^s_l | \leq \frac{C}{\sqrt{l}} \quad \quad l \leq \tau, \text{  on  } B_{\tilde R},
\end{equation*}
or, scaled back to the original scale this means
\begin{equation*}
	|A^s_t| \leq \frac{C}{\sqrt{t - T}},
\end{equation*}
on $B_{\sigma \tilde R}$ for all $t < T_s$ with $T \leq t \leq T + 2(s+T)\tau = (1 + 2\tau)T + 2\tau s$. Notice in particular that
\begin{equation*}
	T_s = aT \leq aT_0 \leq (1 + 2\tau)t_0 + 2 \tau s,
\end{equation*}
so the above estimate always holds up to time $T_s$. Let $t_0 := T(a+1)/2$. Then 
\begin{alignat*}{3}
	|A^s_{t_0}| &\leq \frac{C\sqrt{2}}{\sqrt{(a-1)T}} &&= \frac{C\sqrt{\frac{2(a+3)}{a-1}}}{\sqrt{(a + 3)T}} \\
	&= \frac{C\sqrt{\frac{2(a+3)}{a-1}}}{\sqrt{2(t_0 + T)}} &&\leq \frac{C\sqrt{\frac{2(a+3)}{a-1}}}{\sqrt{2(t_0 + q_1s)}} \\
	&\leq \frac{\tilde C}{\sqrt{2(t_0 + s)}},
\end{alignat*}
on $B_{\sigma \tilde R}$, where we $\tilde C$ is defined as before. Similarly, if $t > 0$ is such that $t_0 + t \leq T_s$ then
\begin{alignat*}{3}
	|A^s_{t_0 + t}| &\leq \frac{C\sqrt{2}}{\sqrt{(a-1)T + t}} &&\leq \frac{C\sqrt{\frac{2(a+3)}{a-1}}}{\sqrt{(a+3)T + 2t}} \\
	&\leq \frac{C\sqrt{\frac{2(a+3)}{a-1}}}{\sqrt{2(t_0 + T + t)}} &&\leq \frac{\tilde C}{\sqrt{2(t_0 + t + s)}}.
\end{alignat*}
In other words, for each $t \in [t_0, T_s)$ we have
\begin{equation*}
	|A^s_t| \leq \frac{\tilde C}{\sqrt{2(s+t)}} \quad \text{  on  } B_{\sigma \tilde R},
\end{equation*}
which implies that for each $t \in [t_0, T_s)$ we have
\begin{equation*}
	|\tilde A^s_t| \leq \tilde C
\end{equation*}
on $B_{\tilde R}$. Applying Lemma \ref{lem:mon}, we may select $t_1 \in [t_0, T_s)$ with 
\begin{equation*}
	\int_{\tilde L^s_{t_1}\cap B_{\tilde R}} |\vec{H} - x^\perp|^2d\mathcal{H}^n \leq \eta.
\end{equation*}
Condition $(iv)$ of Theorem \ref{thm:stability} holds for $\tilde L^s_{t_1}$ by Lemma \ref{lem:proximity}, and condition $(ii)$ holds by definition of $T_s$ as $t_1 < T_s$. Hence by Theorem \ref{thm:stability} we know that $\tilde L^s_{t_1}$ is $\varepsilon$-close to $\Sigma$ in $C^{1,\alpha}(B_{\tilde R})$. Redefine $\hat L^s_l$ to be the Lagrangian mean curvature flow with initial condition $\tilde L^s_{t_1}$, then Lemma \ref{lem:c1alphaclose} says that
\begin{equation*}
	\hat \Theta^s_l(x,r) \leq 1 + \varepsilon_0 \quad\quad r^2, \: l \leq q_1
\end{equation*}
for $|x| \leq \tilde R - 1$. By definition of $\tilde R$ this means that the same is true for $|x| \leq \sqrt{1 + 2q_1}K_0$. Rescaling this is equivalent to
\begin{equation*}
	\tilde \Theta^s_{t_1 + 2(s+t_1)l}\left(\frac{x}{\sqrt{1 + 2l}}, \frac{r}{\sqrt{1 + 2l}}\right) \leq 1 + \varepsilon_0 
\end{equation*}
for $r^2$, $l \leq q_1$ and $|x| \leq \sqrt{1 + 2q_1}K_0$. Or in other words
\begin{equation*}
	\tilde \Theta^s_t(x,r) \leq 1 + \varepsilon_0
\end{equation*}
for $r^2 \leq q_1/(1 + 2q_1) = \tau$, $|x| \leq K_0$ and $t_1 \leq t \leq (1 + 2q_1)t_1 + 2q_1s$. However, $(1 + 2q_1)t_1 + 2q_1s > at_1 > aT = T_s$, a contradiction.
\end{proof}

\section{Short-time Existence}
In this section we prove the following short time existence result using Theorem \ref{thm:mainthm}.
\begin{theorem}
Suppose that $L\subset\mathbb{C}^n$ is a compact Lagrangian submanifold of $\mathbb{C}^n$ with a finite number of singularities, each of which is asymptotic to a pair of transversally intersecting planes $P_1 + P_2$ where neither $P_1 + P_2$ nor $P_1 - P_2$ are area minimizing. Then there exists $T > 0$ and a Lagrangian mean curvature flow $(L_t)_{0<t<T}$ such that as $t \searrow 0$, $L_t \rightarrow L$ as varifolds and in $C^\infty_{loc}$ away from the singularities.
\end{theorem}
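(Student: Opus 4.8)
The plan is to realise $L$ as a subsequential limit of the smooth flows $(L^s_t)$ emanating from the approximating family $(L^s)_{0<s\le c}$ constructed in Section \ref{sec:approx}, using Theorem \ref{thm:mainthm} to obtain a time of existence that is uniform in $s$, together with uniform interior-in-time curvature estimates, and then passing to the limit $s\searrow 0$.

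First I would fix the free constant $\varepsilon_0$ in Theorem \ref{thm:mainthm} to be half the dimensional constant $\varepsilon_0(n,k)$ appearing in White's local regularity theorem \cite{Whi05}. Applying Theorem \ref{thm:mainthm} — after translating so that a given singular point of $L$ sits at the origin, and taking the minimum of the finitely many resulting constants over all the singularities — yields $s_0,\delta_0,\tau>0$ with $\Theta^s_t(x_0,r)\le 1+\varepsilon_0$ whenever $s\le s_0$, $t\le\delta_0$, $r^2\le\tau t$ and $x_0$ lies in a fixed unit ball about any singular point. Away from the singular set, $L^s\to L$ smoothly (by the construction of Section \ref{sec:approx}), so standard short-time existence theory gives uniform curvature bounds for $L^s_t$ on compact subsets there for $t$ in a uniform interval $[0,T')$. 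Combining the two regions — the annular transition regions being controlled by Lemma \ref{lem:proximity} — no singularity of the flow $L^s_t$ can form at any positive time before $T:=\min\{\delta_0,T'\}$, so, after shrinking if necessary, the maximal existence time satisfies $T_s\ge T$ for all $s\le s_0$; moreover White's theorem gives
\[
	|A^s_t|\le \frac{C}{\sqrt{t}}
\]
on a fixed ball about each singular point for $0<t<T$, with $C$ independent of $s$, while $|A^s_t|\le C$ on compact subsets away from the singular set for $t\in[0,T)$. The Ecker--Huisken style estimates of the appendix (Corollary \ref{cor:curvatureest}) then upgrade these to bounds on all covariant derivatives of $A^s_t$, uniform in $s$, with the same $t$-dependence.

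Next I would extract the limit flow. By the uniform estimates above, on every compact subset of $(0,T)\times\mathbb{R}^{2n}$ the local parametrisations of $L^s_t$ and all their derivatives are bounded uniformly in $s$, so by an Arzel\`a--Ascoli compactness argument (exactly as in \cite{IlNeSc14}) there is a sequence $s_j\searrow 0$ with $L^{s_j}_t\to L_t$ in $C^\infty_{loc}$ for every $t\in(0,T)$, where $(L_t)_{0<t<T}$ solves mean curvature flow smoothly. Since $\omega|_{L^s_t}=0$ for all $s$ and $t$, the Lagrangian condition passes to the limit, so $(L_t)$ is a Lagrangian mean curvature flow.

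It remains to show $L_t\to L$ as $t\searrow 0$, and this is where I expect the \emph{main obstacle} to lie. The $C^\infty_{loc}$ convergence away from the singular set is comparatively routine: on a compact set $K$ disjoint from the singularities, the uniform curvature and higher order bounds force $L^s_t$ to be graphical over $L^s$ with $C^k$-small graph function for $t$ small, uniformly in $s$, and $L^s|_K\to L|_K$ smoothly, so $L_t|_K\to L|_K$ in $C^\infty$. For varifold convergence one must also control the behaviour near the finitely many singular points: the bound $\Theta^s_t(x_0,r)\le 1+\varepsilon_0<2$ forces the limit varifolds $\mathbf v(L_t)$ to have unit density and, via (H1) and Huisken's monotonicity \cite{Hui90}, uniformly bounded mass in any ball, so no mass can concentrate at a point; Huisken's monotonicity formula, applied on $[0,t]$ for the flows $L^s_t$ and passed to the limit, shows the mass cannot jump upward as $t\searrow 0$, while the density lower bound $1$ on the support, together with Hausdorff convergence of supports and the fact that near each singularity the flow is modelled at scale $\sqrt t$ on the self-expander $\Sigma$ (whose blow-down is the tangent cone $P_1+P_2$ of $L$ at that point), shows it cannot jump downward. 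Hence $\mathbf v(L_t)\to\mathbf v(L)$ as $t\searrow 0$, which completes the proof. The delicate point throughout is precisely this behaviour at the singular scale, which is exactly what the density estimate of Theorem \ref{thm:mainthm} — built on the stability result Theorem \ref{thm:stability} and the monotonicity Lemma \ref{lem:mon} — was designed to control.
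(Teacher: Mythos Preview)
Your proposal is correct and follows essentially the same strategy as the paper: uniform curvature control away from the singular points via the graphical/Ecker--Huisken type estimates of the appendix, uniform density-ratio control near the singularities via Theorem~\ref{thm:mainthm} combined with White's local regularity theorem, giving a uniform existence time and allowing passage to a subsequential limit flow which attains $L$ as $t\searrow 0$. Two small corrections of attribution: the annular/outer region is handled in the paper directly by Lemma~\ref{lem:graphicalest} together with Corollary~\ref{cor:curvatureest} (not Lemma~\ref{lem:proximity}, which concerns the \emph{rescaled} flow), and Corollary~\ref{cor:curvatureest} bounds only $|A|$ --- the higher covariant derivative estimates come from standard parabolic theory once $|A|$ is uniformly controlled.
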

\begin{proof}
For simplicity we suppose that $L$ has only one singularity at the origin. The case where $L$ has more than one follows by entirely analogous arguments. By standard short time existence theory for smooth compact mean curvature flow, for all $s \in (0,c]$ there exists a Lagrangian mean curvature flow $(L^s_t)_{0 \leq t \leq T_s}$ with $T_s > 0$. We claim that there exists a $T_0 > 0$ such that $T_s \geq T_0$ for all $s$ sufficiently small, and that furthermore, we have interior estimates on $|A|$ and its higher derivatives for all $t > 0$, which are independent of $s$. By virtue of Lemma \ref{lem:graphicalest}, we can apply Corollary \ref{cor:curvatureest} on small balls everywhere outside $B_{1/3}$ to get uniform curvature bounds outside of $B_{1/2}$ up to time $\min\{T_s,\delta\}$ where $\delta > 0$ is independent of $s$. Uniform estimates on the higher derivatives then immediately follow by standard parabolic PDE theory.\\
To obtain the desired bounds on $B_{1/2}$ we use Theorem \ref{thm:mainthm}. Let $\varepsilon_0 > 0$ be the constant of Brian White's local regularity theorem. Then Theorem \ref{thm:mainthm} says that there exist $s_0$, $\delta_0$ and $\tau$ such that for all $s \leq s_0$, $t \leq \delta_0$ and $r^2 \leq \tau t$ we have
\begin{equation*}
	\Theta^s_t(x_0, r) = \Theta^s(x, t+r^2, r) \leq 1 + \varepsilon_0.
\end{equation*}
This implies that for all $s \leq s_0$, $t\leq \delta_0$ and $r^2 \leq \tau t$ we have $\Theta^s(x, t, r) \leq 1 + \varepsilon$. We now fix $s \leq s_0$, $t_0 < \min\{\delta_0, T_s\}$, and $\rho \leq \min\{1/4, \sqrt{t_0}\}$. Then it follows that $B_{2\rho}(x_0) \subset B_1$, and furthermore that 
\begin{equation*}
	\Theta^s(x,t,r) \leq 1 + \varepsilon_0
\end{equation*}
for all $r \leq \tau \rho^2$, and $(x,t) \in B_{2\rho}(x_0) \times (t_0 - \rho^2 , t_0]$. Then it immediately follows from White's theorem that
\begin{equation*}
	|A| \leq \frac{C}{\sqrt{t - t_0 + \rho^2}}
\end{equation*}
for all $(x,t) \in B_\rho(x_0) \times (t_0 - \rho^2, t_0]$, where $C$ depends only on $\tau$ and $\varepsilon_0$. These estimates are then uniform in $s$ for $s \leq s_0$. Moreover, these curvature bounds, along with those outside of the ball $B_{1/2}$, imply that $T_s \geq \min\{\delta, \delta_0\}$.\\
Because the estimates are independent of $s$, they pass to the limit in the varifold topology when we take a subsequential limit of the flows and so we obtain a limiting flow $(L_t)_{0<t<T_0}$, for which $L_t \rightarrow L$ as varifolds.\\
Note that away from the singularities, we can obtain uniform curvature estimates on $|A|$ thanks to Corollary \ref{cor:curvatureest}, so it follows that $(L_t)$ attains the initial data $L$ in $C^\infty_{loc}$ away from the singular points.
\end{proof}

\section{Construction of Approximating Family}\label{sec:approx}

In this section, we consider a Lagrangian submanifold $L$ of $\mathbb{C}^n$ with a singularity at the origin which is asymptotic to the pair of planes $P$ considered in Section \ref{sec:stability}. We approximate $L$ by gluing in the self-expander $\Sigma$ which is asymptotic to $P$ at smaller and smaller scales in place of the singularity. We will show that this yields a family of compact Lagrangians, exact in $B_4$, which satisfy the hypotheses (H1)-(H4) given in Section \ref{sec:main} which are required to implement the analysis in that section. \\
Since $L$ is conically singular we may write $L\cap B_4$ as a graph over $P\cap B_4$ (possibly rescaling $L$ so that this is the case). We may further apply the Lagrangian neighbourhood theorem (its extension to cones was proved by Joyce, \cite[Theorem 4.1]{joyce2003special}), so that we may identify $L\cap B_4$ with the graph of a one-form $\gamma$ on $P$. Recall that the manifold corresponding to the graph of such a one-form is Lagrangian if and only if the one-form is closed.\\
Moreover, since we have assumed that $L$ is exact inside $B_4$, there exists $u\in C^\infty(P\cap B_4)$ such that $\d u=\gamma$. Since we know that $\gamma$ must decay quadratically, we can choose a primitive for $\gamma$ which has cubic decay, i.e.,  
\begin{equation}\label{icest}
 |\nabla^ku(x)|\le C|x|^{3-k}.
\end{equation}
We saw in Theorem \ref{thm:seuniqueness} that there exists a unique, smooth zero-Maslov self-expander asymptotic to $P$. We may also identify the self-expander outside a ball of radius $r_0$ with the graph of a one-form over $P$ and, since a zero-Maslov class Lagrangian self-expander is globally exact, there exists a function $v\in C^\infty(P\backslash B_{r_0})$ such that the self-expander is described by the exact one-form $\psi=\d v$ on $P\backslash B_{r_0}$. Further, Lotay and Neves proved \cite[Theorem 3.1]{LotNe13}
\begin{equation}\label{selfexest}
 \|v\|_{C^k(P\backslash B_r)}\le Ce^{-br^2} \quad \text{for all }r\ge r_0.
\end{equation}
We will glue $\Sigma_s:=\sqrt{2s}\Sigma$ into the initial condition $L$ to resolve the singularity. Our new manifold, $L^s$, will be the rescaled self-expander $\Sigma^s$ inside $B_{r_0\sqrt{2s}}$, the manifold $L$ outside $B_{4}$ and will smoothly interpolate between the two on the annulus $A(r_0\sqrt{2s},4)$.\\
To do this, we will glue together the primitives of the one-forms corresponding to these manifolds, before taking the exterior derivative. This gives us a one-form that will describe $L^s$ on the annulus $A(r_0\sqrt{2s},4)$, which ensures $L^s$ is still Lagrangian and is exact in $B_4$. We will then show that this family satisfies the properties (H1)-(H4).\\
Let $\varphi:\mathbb{R}_+\to [0,1]$ be a smooth function satisfying $\varphi\equiv 1$ on $[0,1]$ and $\varphi\equiv 0$ on $[2,\infty)$. Consider the one-form given by, for $r_0\sqrt{2s}\le |x|\le 4$, $0<s\le c$
\begin{align}\label{oneform}
 \gamma_s(x)&=\d w_s(x)=\d\left[\varphi(s^{-1/4}|x|)2sv(x/\sqrt{2s})+(1-\varphi(s^{-1/4}|x|))u(x)\right], 
\end{align}
 where we have that $r_0\sqrt{2s}<s^{1/4}<2s^{1/4}<4$ holds for all $s\le c$. Notice that in particular we must have $c<1$. Then $\gamma_s(x)\equiv \psi_s(x):=\sqrt{2s}\psi(x/\sqrt{2s})$, the one-form corresponding to the rescaled self-expander $\Sigma_s$ for $|x|<s^{1/4}$ and $\gamma_s\equiv \gamma$ for $|x|>2s^{1/4}$. Notice that since $\gamma_s$ is exact, it is closed and therefore its graph corresponds to an exact Lagrangian.\\
We define $L^s$ by 
\begin{itemize}
 \item $L^s\cap B_{r_0\sqrt{2s}}=\Sigma_s\cap B_{r_0\sqrt{2s}}$,
 \item $L^s\cap A(r_0\sqrt{2s},4)=$graph $\gamma_s$,
 \item $L^s\backslash B_4=L\backslash B_4$.
\end{itemize}
We will now show that $L^s$ satisfies (H1)-(H4).\\
For (H1), notice that both the self-expander and the initial condition individually satisfy (H1), and so for the rescaled self-expander, we have that
\begin{align*}
 \mathcal{H}^n(\Sigma_s\cap B_R)&=\mathcal{H}^n((\sqrt{2s}\Sigma)\cap B_R)=(2s)^{n/2}\mathcal{H}^n(L\cap B_{R/\sqrt{2s}}) \\
&\le (2s)^{n/2}D_1\left(\frac{R}{\sqrt{2s}}\right)^n=D_1R^n.
\end{align*}
Since $L^s$ interpolates between $\Sigma_s$ and $L$ on a compact region, $L^s$ satisfies (H1).\\
We see that (H2) is satisfied because the Lagrangian angle of the initial condition $L$ and the self-expander  $\Sigma$ are bounded, as is that of the rescaled self-expander $\Sigma_s$ by Lemma \ref{lem:eveq} (i) and the maximum principle, since the Lagrangian angle of $P$ is locally constant. When we interpolate between the two, we may consider the formula for the Lagrangian angle of a Lagrangian graph, as seen in \cite[pg. 5]{chau2012lagrangian}. This tells us that a Lagrangian graph in $\mathbb{C}^n$ (over $\mathbb{R}^n$) given by $(x_1,...,x_n,u_1(x),...,u_n(x))$, where $u:\mathbb{R}^n\to \mathbb{R}, u_i:=\frac{\partial u}{\partial x_i},$ has Lagrangian angle
$$
\theta=\sum \arctan \lambda_i,
$$
where the $\lambda_i$'s are the eigenvalues of the Hessian of $u$. Since the eigenvalues of the Hessian of $u$ are some non-linear function of the second derivatives of $u$, if the $C^2$ norm of $u$ is small we have that the Lagrangian angle of the graph is close to that of the Lagrangian angle of the plane that $u$ is a graph over. So we can uniformly bound the Lagrangian angle of the graph. Since in our case, the Lagrangian angle of $\gamma_s$ is given by the sum of arctangents of the eigenvalues of the Hessian of the function $w_s$, and, as we will show when we prove (H4), the $C^2$ norm of $w_s$ is small, this means that we can uniformly bound the Lagrangian angle of the graph $\gamma_s$, and so the Lagrangian angle of $L^s$.  \\ 
On the initial condition, since $\lambda=Jx$, we have that $\d \beta_L=\lambda|_L=(Jx)^T$. Therefore, $\beta_L$ is bounded quadratically, and so is the primitive for the Liouville form of $L^s\backslash B(2s^{1/4})$.
On the self-expander, applying the maximum principle to Lemma \ref{lem:eveq} (ii), we have $\beta_s$ (the primitive of $\lambda|_{\Sigma_s}$) is bounded by $\beta_P$, and so $|\beta^s(x)|\le |\beta_P(x)|\le C|x|^2$ for $|x|<s^{1/4}$. So it remains to check this still holds where we interpolate. We perform a calculation similar to that in the proof of Lemma \ref{lem:eveq}(ii).
We have that, for $L^s_t$ the manifold described by the graph of the one-form $t\d w_s$,
$$
\frac{\d}{\d t} \lambda|_{L^s_t}=:\mathcal{L}_{J\nabla w_s} \lambda|_{L^s_t}=\d(J\nabla w_s\lrcorner \lambda|_{L^s_t})+J\nabla w_s\lrcorner \d\lambda|_{L^s_t}.
$$
Since $\d\lambda=\omega$ and $J\nabla w_s\lrcorner \omega=\d w_s$ and possibly adding constant to $\beta^s_t$ dependent on $s$ and $t$, we have that
$$
\frac{\d\beta^s_t}{\d t}=-2w_s+\langle x,\nabla w_s \rangle|_{L^s_t},
$$
where $\d \beta^s_t$ is equal to the restriction of the Liouville form $\lambda$ to graph of $t\gamma_s$. Integrating, we find that
$$
 \beta^s=\beta_P-2w_s+\int_0^1\langle x,\nabla w_s\rangle|_{L^s_t} \,\d t,
$$ 
where $\beta_P$ is the primitive for $\lambda$ on $P$. 
Now, $w_s$ is bounded independently of $s$ by $D(1+|x|^2),$ using \eqref{icest} and \eqref{selfexest}, as is $\langle x,\nabla w_s\rangle$, using Cauchy-Schwarz and the estimates \eqref{icest} and \eqref{selfexest} so we find that $\beta^s$ is bounded independently of $s$ on the annulus $A(s^{1/4},2s^{1/4})$. Therefore, we have that
$$
|\theta^s(x)|+|\beta^s(x)|\le D_2(|x|^2+1).
$$
and so (H2) is satisfied.\\

To show that (H3) is satisfied, recall that we define $L^s$ as $L^s\cap B_{r_0\sqrt{2s}}=\Sigma_s\cap B_{r_0\sqrt{2s}}$, $L^s\backslash B_4=L\backslash B_4$ and we interpolate smoothly between the two, which exactly happens when $s^{1/4}\le|x|\le2s^{1/4}$. Therefore when we rescale by $1/\sqrt{2s}$, we have that $\tilde L^s\cap B_{r_0}\equiv \Sigma$.  So it remains to check convergence outside this ball.\\
On the annulus $r_0\le|x|\le4/\sqrt{2s}$, $\tilde L^s$ is identified with the graph of the following one-form 
$$
\tilde \gamma_s (x)=\d \left[\varphi(s^{1/4}|x|)v(x)+(1-\varphi(s^{1/4}|x|))\frac{u(\sqrt{2s}x)}{2s}\right].
$$
From this expression, noticing that 
$$
\frac{u(\sqrt{2s}x)}{2s}\le C\frac{(2s)^{3/2}x^3}{2s}=C\sqrt{2s}x,
$$
 we see that as $s\to 0$, $\tilde\gamma_s\to \d v=\psi$, the one-form whose graph is identified with $\Sigma$. This says that, outside $B_{r_0}$, $\tilde L^s\to \Sigma$ as $s\to 0$ smoothly. Therefore we actually have stronger than the required $C^{1,\alpha}_{loc}$ convergence. \\
Finally, we check that the second fundamental form of $\tilde L^s$ is uniformly bounded in $s$. We have that the second fundamental form of $\Sigma$ must be bounded, and if $A$ is the second fundamental form of $L$, rescaling $L$ by $1/\sqrt{2s}$ means that the second fundamental form scales by $\sqrt{2s}$. Since $\sqrt{2s}<1$, we can uniformly bound both second fundamental forms so that $\tilde L^s$, which is a combination of both $\Sigma$ and $1/\sqrt{2s}L$, has second fundamental form uniformly bounded in $s$. 
 \\
To see (H4), first notice that since we can write $L^s\cap A(r_0\sqrt{2s},4)$ as a graph over $P\cap A(r_0\sqrt{2s},4)$, we have that $L^s$ has the same number of connected components as $P$ in the annulus $A(r_0\sqrt{2s},4)$. \\ 
We now must estimate $\gamma_s$. Firstly, note that we have
\begin{equation}\label{rescaledest}
 |\nabla^k(v(x/\sqrt{2s}))|\le |(2s)^{-k/2}(\nabla^kv)(x/\sqrt{2s})|\le C(2s)^{-k/2}e^{-b|x|^2/2s},
\end{equation}
where we have used \eqref{selfexest}.\\
We will need different estimates on $2s\nabla^{2}v(x/\sqrt{2s})$ and $2s\nabla^{3}v(x/\sqrt{2s})$, which we find as follows.
\begin{align}\nonumber
 |2s\nabla^2v(x/\sqrt{2s})|&\le Ce^{-b|x|^2/2s}=C\frac{\sqrt{2s}}{|x|}\frac{|x|}{\sqrt{2s}}e^{-b|x|^2/2s}  
\\
&=C\frac{\sqrt{2s}}{|x|}e^{-\tilde b |x|^2/2s}\frac{|x|}{\sqrt{2s}}e^{-\tilde b |x|^2/2s} 
\le \tilde C \frac{\sqrt{2s}}{|x|}e^{-\tilde b |x|^2/2s},\label{nabla2}
\end{align}
where $\tilde b=b/2$ and $\tilde C= Ce^{-1/2}/\sqrt{b}$, since the function $y\mapsto ye^{-by^2/2}$ is bounded independently of $y$ (by $e^{-1/2}/\sqrt{b}$) on $\mathbb{R}$, and so $\tilde C$ is independent of $s$.\\
A similar calculation, this time noticing the uniform boundedness of the function $y\mapsto ye^{-by/2}$ for $y>0$ we can show that
\begin{equation}\label{nabla3}
 |2s\nabla^3 v(x/\sqrt{2s})|\le C \frac{\sqrt{2s}}{|x|^2}e^{-b|x|^2/2s},
\end{equation}
where we make $C$ (which remains independent of $s$) larger if necessary and $b$ smaller (which does not affect the previous estimates).\\
We have, using the definition in \eqref{oneform},
\begin{align*}
 |\gamma_s|=|\nabla w_s|&=|\varphi'(s^{-1/4}|x|)2s^{3/4}v(x/\sqrt{2s})+\varphi(s^{-1/4}|x|)2s\nabla[v(x/\sqrt{2s})] \\
&-s^{-1/4}\varphi'(s^{-1/4}|x|)u(x)+(1-\varphi(s^{-1/4}|x|))\nabla u(x)|,
\end{align*}
and, using that $s^{3/4}=\sqrt{s}s^{1/4}< \sqrt{s}$ since $s<1$, \eqref{icest} and \eqref{rescaledest} imply that
\begin{align}\nonumber
 |\gamma_s|&\le \sqrt{2s}Ce^{-b|x|^2/2s}+\sqrt{2s}Ce^{-b|x|^2/2s}+C|x|^{3-1}+C|x|^2 \\
&\le C\left[\sqrt{2s}e^{-b|x|^2/2s}+|x|^2\right],\label{star1}
\end{align}
where we have made $C$ larger.\\
Now consider
\begin{align*}
 |\nabla \gamma_s|=|\nabla^2 w_s|&=|\varphi''(s^{-1/4}|x|)2s^{1/2}v(x/\sqrt{2s})+\varphi'(s^{-1/4}|x|)4s^{3/4}\nabla[v(x/\sqrt{2s})] \\
&+\varphi(s^{-1/4}|x|)2s\nabla^2[v(x/\sqrt{2s})]-s^{-1/2}\varphi''(s^{-1/4}|x|)u(x)\\
&-2s^{-1/4}\varphi'(s^{-1/4}|x|)\nabla u(x) +(1-\varphi(s^{-1/4}|x|))\nabla^2u(x)|
\end{align*}
Using that on the support of $\varphi'$ and $\varphi''$ we have ($s<1$) $\sqrt{s}<s^{1/4}\le \sqrt{2}\sqrt{2s}/|x|$, and applying the estimates \eqref{rescaledest} and \eqref{nabla2}
\begin{align}\nonumber
 |\nabla \gamma_s|&\le C\left[\left(\frac{\sqrt{2s}}{|x|}+\frac{\sqrt{2s}}{|x|}+\frac{\sqrt{2s}}{|x|}\right)e^{-b|x|^2/2s}+|x|^{3-2}+|x|^{2-1}+|x|\right] \\
&\le C\left[\frac{\sqrt{2s}}{|x|}e^{-b|x|^2/2s}+|x|\right].\label{star2}
\end{align}
Finally, performing a similar computation to those above and combining \eqref{rescaledest}, \eqref{nabla2} and \eqref{nabla3} we find that
\begin{equation}\label{star3}
|\nabla^2\gamma_s|\le C\left[\frac{\sqrt{2s}}{|x|^2}e^{-b|x|^2/2s}+1\right].
\end{equation}
Combining \eqref{star1}, \eqref{star2} and \eqref{star3}, we have that 
$$
|\gamma_s|+|x||\nabla \gamma_s|+|x|^2|\nabla^2\gamma_s|\le D_3\left(|x|^2+\sqrt{2s}e^{-b|x|^2/2s}\right),
$$
where $D_3$ is a constant independent of $s$. Therefore (H4) is satisfied.

\section{Appendix} \label{sec:appendix}
We collect in the appendix a few technical results about Mean curvature flow in high codimension that were used throughout the paper. The first is a graphical estimate. Specifically, if the initial manifold can be written locally as a graph with small gradient in some cylinder, then the submanifold remains graphical in a smaller cylinder and we retain control on the gradient. To state this more rigorously we first introduce some notation. The notation and statement of the result are as in \cite{IlNeSc14}. Given any point $x\in\mathbb{R}^{n+k}$ we write $x = (\hat x, \tilde x)$, where $\hat x$ is the projection onto $\mathbb{R}^n$ and $\tilde x$ is the projection onto $\mathbb{R}^k$. We define the cylinder $C_R(x_0)\subset \mathbb{R}^{n+k}$ by
\begin{equation*}
	C_r(x) = \{x\in\mathbb{R}^{n+k}| |\hat x - \hat x_0| < r, |\tilde x - \tilde x_0| < r\}.
\end{equation*}
Furthermore, we write $B^n_r(x_0) = \{(\hat x, \tilde x_0)\in\mathbb{R}^{n+k}| |\hat x - \hat x_0| < r\}$.
\begin{lemma} \label{lem:graphicalest}
Let $(M^n_t)_{0\leq t<T}$ be a smooth mean curvature flow of embedded $n$-dimensional submanifolds in $\mathbb{R}^{n+k}$ with area ratios bounded by $D$. Then for any $\eta > 0$, then there exists $\varepsilon$, $\delta > 0$, depending only on $n$, $k$, $\eta$, $D$, such that if $x_0\in M_0$ and $M_0\cap C_1(x_0)$ can be written as $\mathrm{graph}(u)$, where $u:B^n_1(x_0)\rightarrow\mathbb{R}^k$ with Lipschitz constant less than $\varepsilon$, then
\begin{equation*}
	M_t\cap C_\delta(x_0) \quad\quad t\in [0,\delta^2)\cap [0,T)
\end{equation*}
is a graph over $B^n_\delta(x_0)$ with Lipschitz constant less than $\eta$ and height bounded by $\eta\delta$. 
\end{lemma}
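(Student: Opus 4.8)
\medskip
\noindent\textbf{Proof proposal.} This is the standard graphical‑persistence lemma for mean curvature flow, and the plan is to follow the corresponding argument of Ilmanen--Neves--Schulze \cite{IlNeSc14}. Both hypothesis and conclusion are translation invariant, so I would first normalise $x_0=0$ and, translating in $\mathbb{R}^k$, arrange that $u(\hat 0)=0$, so that the Lipschitz bound gives $|u|\le\varepsilon$ on $B^n_1(0)$. The proof then has two stages: an interior curvature estimate valid for $t>0$, \emph{with a constant that is small when $\varepsilon$ is small}; and a continuity argument turning this into persistence of graphicality, of small gradient, and of the height bound.

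For the curvature estimate, fix $x$ with $|x|\le\tfrac12$ and $0<r\le r_1$ and estimate the Gaussian density ratio $\Theta(x,r^2,r)=\int_{M_0}(4\pi r^2)^{-n/2}e^{-|y-x|^2/4r^2}\,d\mathcal{H}^n(y)$ by splitting $M_0=(M_0\cap C_1)\cup(M_0\setminus C_1)$. On $M_0\cap C_1=\mathrm{graph}(u)$ the area element is at most $(1+C\varepsilon^2)$ times the Euclidean one, so this part is at most $(1+C\varepsilon^2)\int_{\mathbb{R}^n}(4\pi r^2)^{-n/2}e^{-|z|^2/4r^2}\,dz=1+C\varepsilon^2$; on $M_0\setminus C_1$ the exponential weight is at most $e^{-c/r^2}$ on the relevant set and the area‑ratio bound $D$ makes this part at most $C(D,n)\,e^{-c/r^2}$. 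Choosing $r_1=r_1(n,k,D)$ and then $\varepsilon$ small gives $\Theta(x,r^2,r)\le 1+\varepsilon_0$, and by Huisken's monotonicity formula the same bound holds for $\Theta(x,t,\rho)$ whenever $|x|\le\tfrac12$, $t\le r_1^2$ and $\rho\le\sqrt t$. White's local regularity theorem then yields $|A|(x,t)\le C_0/\sqrt t$ on $B^n_{r_1/4}(0)\times(0,r_1^2/16)$ with $C_0=C_0(n,k)$; and a compactness argument upgrades this to $|A|(x,t)\le\omega(\varepsilon)/\sqrt t$ with $\omega(\varepsilon)\to0$ as $\varepsilon\to0$ --- any blow‑up obtained by sending $\varepsilon\searrow0$ is, by the density bound just proved and White's theorem, a smooth flow attaining a multiplicity‑one plane as $t\searrow0$, hence static by the equality case of the monotonicity formula, so has vanishing curvature.

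In the second stage I would fix $\delta\le r_1/4$ and let $t_\ast$ be the supremum of those $t'\in[0,\delta^2)\cap[0,T)$ for which $M_t\cap C_\delta$ is a graph over $B^n_\delta(0)$ with Lipschitz constant $<\eta$ and height $<\eta\delta$ for all $t<t'$; since $M_0$ satisfies these strictly and the flow is smooth, $t_\ast>0$. On $[0,t_\ast)$ the relevant sheet stays well inside $C_{r_1/4}$, so the curvature estimate applies, and the density bound $\Theta\le1+\varepsilon_0<2$, valid throughout $B_{1/2}(0)$, rules out any second sheet entering the cylinder. Integrating $\bigl|\tfrac{dF}{dt}\bigr|=|\vec H|\le|A|\le\omega(\varepsilon)/\sqrt t$ then bounds the displacement of each point by $2\omega(\varepsilon)\sqrt t\le2\omega(\varepsilon)\delta$ and the rotation of the tangent planes (whose rate is also bounded by $|A|$) by the same amount, so they stay within $\varepsilon+2\omega(\varepsilon)\delta$ of $\mathbb{R}^n\times\{0\}$; a standard covering argument then shows $\pi\colon M_t\cap C_\delta\to B^n_\delta(0)$ is still a diffeomorphism on $[0,t_\ast]$, with graph function of Lipschitz constant $\le C_2(\varepsilon+\omega(\varepsilon)\delta)$ and height $\le\varepsilon\delta+2\omega(\varepsilon)\delta$. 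Choosing $\varepsilon$ small enough, depending on $\eta,\delta,n,k,D$, that both quantities are strictly less than $\eta$ and $\eta\delta$ respectively shows that $t_\ast$ cannot be a genuine failure time, so $t_\ast=\min\{\delta^2,T\}$, which is the claim.

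The main obstacle is the curvature estimate with a constant $\omega(\varepsilon)\to0$, rather than the fixed $C_0$ that White's theorem gives directly: this smallness is precisely what makes the height bound $\eta\delta$ attainable for arbitrarily small $\eta$, and obtaining it forces the blow‑up/rigidity argument above. A secondary technical point, needing some care, is the behaviour of the continuity argument near $t=0$, where the convergence of the flow to its flat initial datum is only weak; here one uses that $M_0\cap C_1$ is a single near‑horizontal sheet, together with the curvature estimate, to propagate graphicality across $t=0$.
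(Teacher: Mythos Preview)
The paper does not prove this lemma at all: it simply states ``The proof can be found in \cite{IlNeSc14}''. Your proposal is to follow exactly that reference, and the sketch you give---density--ratio estimate via monotonicity, White's local regularity for a $C_0/\sqrt t$ curvature bound, a blow--up/compactness argument to sharpen this to $\omega(\varepsilon)/\sqrt t$, and then a continuity argument integrating the curvature bound---is indeed the argument of Ilmanen--Neves--Schulze, so your approach coincides with what the paper intends.
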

The proof can be found in \cite{IlNeSc14}

Next we prove that if an initial manifold $M$ is close to some smooth manifold $\Sigma$ in $C^{1,\alpha}$, then one gets estimates on the density ratios that are independent of $M$.

\begin{lemma}\label{lem:c1alphaclose}
Let $\Sigma$ be a smooth manifold with bounded curvature and let $(M_t)_{t\in[0,T)}$ be a solution of mean curvature flow. Fix $\varepsilon_0 > 0$, $\alpha < 1$. There are $\varepsilon = \varepsilon(\Sigma, \varepsilon_0, \alpha) > 0$ and $q_1 = q_1(\Sigma, \varepsilon_0, \alpha) > 0$ such that for every $R \geq 2$, if $M_0$ is $\varepsilon$-close to $\Sigma$ in $C^{1,\alpha}(B_R)$ then for every $r^2,t\leq q_1$ and $y\in B_{R-1}$ we have
\begin{equation*}
	\Theta_t(y,r) \leq 1 + \varepsilon_0
\end{equation*}
\end{lemma}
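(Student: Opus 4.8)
The plan is to reduce, via Huisken's monotonicity formula, to an estimate on the time-zero slice $M_0$, and then to exploit that $M_0$ is $C^{1,\alpha}$-close to the fixed smooth manifold $\Sigma$, which has bounded curvature, in order to show that at sufficiently small scales $M_0$ is almost flat; its Gaussian density ratios are then close to those of a plane, which are always at most $1$. Concretely, applying the monotonicity formula to $(M_t)$ gives, for $t\geq 0$ and $r>0$,
\[
	\Theta_t(y,r)=\Theta(y,t+r^2,r)\leq\Theta(y,t+r^2,\sqrt{t+r^2})=\int_{M_0}\Phi(y,t+r^2)\,d\mathcal H^n,
\]
so, writing $\rho:=t+r^2\leq 2q_1$, it is enough to find $\varepsilon,q_1$ so that $\int_{M_0}\Phi(y,\rho)\,d\mathcal H^n\leq 1+\varepsilon_0$ for every $y\in B_{R-1}$ and every $\rho\leq 2q_1$. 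The only property of $M$ beyond its closeness to $\Sigma$ that I will use is that $M_0$ has uniformly bounded area ratios, $\mathcal H^n(M_0\cap B_s(z))\leq Ds^n$, which holds in all the situations where this lemma is invoked.

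The key step is to extract small-scale flatness of $M_0$. Since $\Sigma$ is smooth with bounded curvature it has positive normal injectivity radius, so there is $r_\Sigma=r_\Sigma(\Sigma)\in(0,1)$ such that whenever $\mathrm{dist}(p,\Sigma)<r_\Sigma$ the set $\Sigma\cap B_{2r_\Sigma}(p)$ is a single connected graph, of gradient at most $1/100$, over the tangent plane of $\Sigma$ at the nearest point. Taking $\varepsilon$ small depending on $\Sigma$ and $r_\Sigma$ and unwinding the definition of $\varepsilon$-closeness, one sees that for every $y\in B_{R-1}$ the set $M_0\cap B_{r_\Sigma}(y)$ is either empty or a single connected graph over some $n$-plane with gradient at most $1/10$, and that $M_0$ lies within $\varepsilon$ of $\Sigma$ over $B_{R-1/2}$. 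The delicate point is that the closeness notion of Section 2, because of its built-in rescaling, controls the gradient of $M_0$ at scale $\varepsilon$ only by an $O(1)$ constant, not by something small; what it does control is the $C^{1,\alpha}$ norm of the graph functions, so that at any $x_0\in M_0\cap B_{R-1}$ and any scale $s\leq\varepsilon$ the piece $M_0\cap B_s(x_0)$ is a graph over $T_{x_0}M_0$ with gradient $\leq C(s/\varepsilon)^{\alpha}$, which is genuinely small once $s\ll\varepsilon$. This is where $\alpha$ enters, and it forces the eventual $q_1$ to be of order $\varepsilon^2$, hence to depend on $\Sigma$, $\varepsilon_0$ and $\alpha$.

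Granted this flatness at scale $\sqrt\rho$, I would choose constants in the order $\delta_0=\delta_0(n,\varepsilon_0)$ (a gradient threshold making a graph have area ratios at most $1+\varepsilon_0/4$ relative to Euclidean), then $L=L(n,\varepsilon_0)$ large with $\sum_{j\geq0}(2^jL)^n e^{-(2^jL)^2/4}\leq\varepsilon_0/8$, then $\varepsilon=\varepsilon(\Sigma,\ldots)$, and finally $q_1$ small enough that $s_0:=L\sqrt{2q_1}$ lies below both $r_\Sigma$ and the flatness scale of the previous step. Splitting $\int_{M_0}\Phi(y,\rho)$ over $B_{s_0}(y)$, over $B_{r_\Sigma}(y)\setminus B_{s_0}(y)$, and over the exterior of $B_{r_\Sigma}(y)$: on $B_{s_0}(y)$, if it meets $M_0$ it lies in a graph of gradient $\leq\delta_0$ over a plane $Q$, and a layer-cake comparison with the density of $Q$ — which equals $e^{-\mathrm{dist}(y,Q)^2/4\rho}\leq 1$ — bounds this contribution by $1+\varepsilon_0/4$ up to a Gaussian-small boundary term at radius $s_0$ that is absorbed by enlarging $L$; on the middle annulus, $|x-y|\geq s_0$ and the area-ratio bound on dyadic sub-annuli give a bound by $C\sum_{j\geq0}(2^jL)^n e^{-(2^jL)^2/4}\leq\varepsilon_0/8$; on the exterior, the global area ratios $D$ and $|x-y|\geq r_\Sigma$ give a bound tending to $0$ as $\rho\to0$, hence $\leq\varepsilon_0/8$ once $q_1$ is small. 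Adding the three pieces yields $\int_{M_0}\Phi(y,\rho)\,d\mathcal H^n\leq 1+\varepsilon_0$. The main obstacle is exactly the one flagged above — promoting the weak $O(1)$ gradient control that the definition of $\varepsilon$-closeness supplies at scale $\varepsilon$ to honest near-flatness at the scale $\sqrt\rho$ on which the Gaussian weight concentrates — together with the minor point of excluding a second sheet of $M_0$ near $y$ when $y$ is close to a highly curved part of $\Sigma$, both handled by working below the normal injectivity radius of $\Sigma$ and taking $q_1$ small relative to $\varepsilon$.
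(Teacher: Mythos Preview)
Your argument is correct but follows a genuinely different route from the paper's. The paper's proof is a two-line reduction to Lemma~\ref{lem:graphicalest}: the curvature bound on $\Sigma$ gives a uniform radius at which $\Sigma$ is a graph with small Lipschitz constant, the $\varepsilon$-closeness hypothesis then forces $M_0$ to be a graph with small Lipschitz constant in such cylinders, and Lemma~\ref{lem:graphicalest} propagates this graphicality forward to $M_t$ for a uniform short time, from which the density-ratio bound is immediate. You instead avoid the forward-in-time lemma entirely, using Huisken monotonicity to push everything back to $t=0$ and then estimating $\int_{M_0}\Phi(y,\rho)\,d\mathcal H^n$ directly by a near/middle/far decomposition.

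What each buys: the paper's route is much shorter once Lemma~\ref{lem:graphicalest} (quoted from \cite{IlNeSc14}) is available, and as a byproduct gives honest $C^1$ control on $M_t$ in a cylinder, not merely density bounds. Your route is self-contained and more elementary, and you engage more carefully with the rescaling definition of $\varepsilon$-closeness --- in particular you correctly isolate that this notion gives only $O(1)$ gradient control at scale $\varepsilon$, and that one must pass to the smaller scale $s\ll\varepsilon$ via the $C^{0,\alpha}$ seminorm of the gradient to obtain genuine flatness; this is exactly why $\alpha$ and the dependence $q_1=q_1(\varepsilon)$ appear. The paper's proof glosses over this point. Both arguments implicitly require a uniform area-ratio bound on $M_0$ (yours explicitly, the paper's through the hypotheses of Lemma~\ref{lem:graphicalest}), which is harmless in every application in the paper.
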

\begin{proof}
This follows immediately from Lemma \ref{lem:graphicalest}. Indeed the curvature bound on $\Sigma$ means that there is a uniform radius $r$ such that for any $x\in\Sigma$, $\Sigma\cap C_r(x)$ is (after maybe rotating) a graph with small gradient. By requiring that $\varepsilon$ is small enough we can therefore ensure that any $M_0$ which is $\varepsilon$-close to $\Sigma$ in $C^{1,\alpha}(B_r(x))$ is also a graph with small gradient. It only remains to apply Lemma \ref{lem:graphicalest}.
\end{proof}

\subsection{Local curvature estimates for high codimension graphical MCF}
In \cite{EckHu89} Ecker and Huisken proved celebrated curvature estimates for entire graphs moving by mean curvature in codimension one, they then localised these in \cite{EckHu91} to prove interior estimates for hypersurfaces moving by mean curvature flow. Analogous results in higher codimension have been proved by Mu-Tao Wang in \cite{Wang01} and \cite{Wang04} respectively. In light of examples of Lawson and Osserman \cite{LawsonOsserman} one needs to assume an additional `$K$ local Lipschitz condition', such a condition is in fact satisfied by any $C^1$ manifold at small enough scales, so for our purposes there will be no problems applying the estimates. We would like to use the estimates derived in \cite{Wang04} without the time localisation, so we will briefly outline the changes to the proof, though all calculations remain analogous to those used by Wang or Ecker-Huisken. We first introduce the notation used by Wang in \cite{Wang01, Wang04}. We consider a function $u:\mathbb{R}^n\rightarrow\mathbb{R}^k$ whose graph in $\mathbb{R}^n\times\mathbb{R}^k$ evolves by mean curvature. $\Omega$ is the volume form of $\mathbb{R}^n$ which we can extend to a parallel form on $\mathbb{R}^n\times\mathbb{R}^k$. As shown by Wang, one can calculate that 
\begin{equation*}
	\ast \Omega = \frac{1}{\sqrt{\mathrm{det}(\delta_{ij} + D_i u\cdot D_ju)}} = \frac{1}{\sqrt{\prod_{i=1}^n(1 + \lambda_i^2)}}	
\end{equation*}
where $\lambda_i$ are the eigenvalues of $\sqrt{(du_t)^Tdu_t}$. Moreover, for $\varepsilon > 0$ small (depending only on the dimensions $n$ and $k$), we have that if 
\begin{equation*}
	\mathrm{det}(\delta_{ij} + D_iu\cdot D_ju) < 1 + \varepsilon,
\end{equation*}
(this is precisely the $K$ local Lipschitz condition of \cite{Wang04} with $K = 1/(1+\varepsilon)$) then $\ast\Omega$ satisfies the evolution inequality
\begin{equation*}
	\frac{d}{dt}\ast \Omega \geq \Delta \ast \Omega + \frac{1}{2}\ast \Omega|A|^2.
\end{equation*}
To simplify notation slightly we define $\eta := \ast \Omega$, then one can estimate (following \cite{Wang01})
\begin{equation*}
	\frac{d}{dt}\eta^p \geq \Delta\eta^p + \left(\frac{p}{2} - p(p-1)n\varepsilon\right)\eta^p|A|^2.
\end{equation*}
We also recall the evolution of the second fundamental form under mean curvature flow yields the differential inequality
\begin{equation*}
	\frac{d}{dt}|A|^2 \leq \Delta |A|^2 - 2|\nabla|A||^2 + C|A|^4,
\end{equation*}
where $C$ is a dimensional constant. We see that these estimates precisely tell us that we are in the correct setting to apply Lemma 4.1 of \cite{Wang04} with the choices $h = |A|$ and $f = \eta^p$. Following the proof of Lemma 4.1 we find that with $\varphi$ defined as
\begin{equation*}
	\varphi(x) := x/(1-\kappa x)
\end{equation*}
with $\kappa > 0$ to be determined, we have the following evolution inequality for $g = \varphi(\eta^{-2p})|A|^2$
\begin{align*}
	\left(\frac{d}{dt} - \Delta\right)g &\leq -2C\kappa g^2 - \frac{2\kappa}{(1 - \kappa\eta^{-2p})^2}|\nabla \eta^{-p}|^2g -2\varphi\eta^{3p}\nabla\eta^{-p}\cdot\nabla g.
\end{align*}
We then introduce the cut-off function $\xi := (R^2 - r)^2$ where $R > 0$ is a fixed radius and $r(x,t)$ satisfies
\begin{equation*}
	\left|\left(\frac{d}{dt} - \Delta\right)r\right| \leq c(n,k) \quad\quad |\nabla r|^2 \leq c(n,k)r,
\end{equation*}
then following \cite{EckHu91} we arrive at
\begin{align*}
	\left(\frac{d}{dt} - \Delta\right)g\xi &\leq -C\kappa\xi g^2 - 2(\varphi\eta^{3p}\nabla\eta^{-p} + \xi^{-1}\nabla\xi) \cdot \nabla(g\xi) \\
	&\quad\quad+ c(n,k)\left(\left(1 + \frac{1}{\kappa\eta^{-2p}}\right)r + R^2\right)g.
\end{align*}
It is possible now to also localise in time as in \cite{EckHu91}, which would get us to the estimates in \cite{Wang04}, but for our purposes this is unnecessary, so instead we now suppose that $m(T) := \sup_{0\leq t\leq T} \sup_{\{x\in M_t|r(x,t)\leq R^2\}} g\xi$ is attained at some time $t_0 > 0$, then at a point where $m(T)$ is attained we have
\begin{equation*}
	C\kappa\xi g^2 \leq c(n,k)\left(1 + \frac{1}{\kappa\eta^{-2p}}\right)R^2g
\end{equation*}
multiplying by $\xi/C\kappa$ we have
\begin{equation*}
	m(T) \leq \frac{c(n,k)}{C\kappa}\left(1 + \frac{1}{\kappa\eta^{-2p}}\right)R^2
\end{equation*}
In the set $\{x\in M_t|r(x,t)\leq \theta R^2, \: t\in[0,T]\}$ we have $\varphi \geq 1$, $\xi \geq (1-\theta)^2R^4$ so
\begin{equation*}
	|A|^2(1-\theta)^2R^4 \leq g\xi\leq \frac{c(n,k)}{C\kappa}\left(1 + \frac{1}{\kappa\eta^{-2p}}\right)R^2
\end{equation*}
We now choose
\begin{equation*}
	\kappa := \frac{1}{2}\inf_{\{x\in M_t| r(x,t) \leq R^2 \: t\in [0,T]\}} \eta^{2p},
\end{equation*}
then as $\eta^{-2p} \geq 1$ and $\kappa \leq 1/2$ we have that $(1 + 1/\kappa \eta^{-2p}) \leq 2/\kappa$, so
\begin{equation*}
	|A|^2 \leq \frac{c(n,k)}{\kappa^2R^2(1-\theta)^2} = \frac{c(n,k)}{R^2(1-\theta)^2}\sup_{\{x\in M_t| r\leq R^2 \: t\in[0,T]\}} \eta^{-4p}
\end{equation*}
In the set $\{x\in M_t|r(x,t)\leq \theta R^2, \: t\in[0,T]\}$. The preceding discussion establishes the following theorem
\begin{theorem}[High codimension interior estimate]\label{thm:curvatureest}
Let $R > 0$ and suppose that $K_{R^2} := \{(x,t)\in M_t | r(x,t) \leq R^2 \}$ is compact and can be written as a graph over some plane for $t\in[0,T]$. Suppose further that if the graph function is denoted by $u$, that
\begin{equation*}
	\mathrm{det}(\delta_{ij} + D_iu\cdot D_ju) < 1 + \varepsilon,
\end{equation*} 
where $\varepsilon > 0$ depends only on $n$ and $k$. Then for any $t\in[0,T]$ and $\theta \in (0,1)$ we have
\begin{align}\label{eq:curvest}
	\sup_{K_{\theta R^2}}&|A|^2 \leq \max\left\{ \frac{c(n)}{R^2(1-\theta)^2}\sup_{K_{R^2}} \eta^{-4p}, \sup_{\{x\in M_0| r\leq R^2\}} \frac{|A|^2\varphi(\eta^{-2p})}{(1-\theta)^2} \right\}
\end{align}
\end{theorem}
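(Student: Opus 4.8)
The plan is to follow the Ecker--Huisken maximum-principle scheme, in the higher-codimension form of Mu-Tao Wang: form a single quantity out of $|A|^2$ and a power of the Jacobian $\eta=\ast\Omega<1$, multiply it by a spatial cut-off adapted to the function $r(x,t)$, and apply the parabolic maximum principle on the compact set $K_{R^2}$. The role of the $K$-local Lipschitz hypothesis $\det(\delta_{ij}+D_iu\cdot D_ju)<1+\varepsilon$ is precisely to keep the reaction term in the evolution of $\eta$ favourably signed.

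First I would record the evolution inequalities at our disposal under the hypotheses: Wang's computation gives $\frac{d}{dt}\eta\ge\Delta\eta+\frac12\eta|A|^2$, hence $\frac{d}{dt}\eta^p\ge\Delta\eta^p+\bigl(\frac{p}{2}-p(p-1)n\varepsilon\bigr)\eta^p|A|^2$ once $p=p(n,k)$ is fixed and $\varepsilon=\varepsilon(n,k)$ is chosen small enough that the bracket stays positive; alongside this, the standard $\frac{d}{dt}|A|^2\le\Delta|A|^2-2|\nabla|A||^2+C(n,k)|A|^4$, whose gradient term is what lets us absorb the cross terms later. Following Wang's Lemma~4.1, a computation with $\varphi(x)=x/(1-\kappa x)$ ($\kappa>0$ to be fixed) then yields, for $g=\varphi(\eta^{-2p})|A|^2$,
\[
\Bigl(\frac{d}{dt}-\Delta\Bigr)g\le -2C\kappa g^2-\frac{2\kappa}{(1-\kappa\eta^{-2p})^2}|\nabla\eta^{-p}|^2g-2\varphi\eta^{3p}\nabla\eta^{-p}\cdot\nabla g,
\]
the essential gain being the strictly negative $-2C\kappa g^2$.

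Next I would introduce the cut-off $\xi:=(R^2-r)^2$; using $|(\frac{d}{dt}-\Delta)r|\le c(n,k)$ and $|\nabla r|^2\le c(n,k)r$ to rewrite the error terms, one multiplies the above by $\xi$ and absorbs everything but the quadratic term and a benign zeroth-order term, arriving at
\[
\Bigl(\frac{d}{dt}-\Delta\Bigr)(g\xi)\le -C\kappa\xi g^2-2\bigl(\varphi\eta^{3p}\nabla\eta^{-p}+\xi^{-1}\nabla\xi\bigr)\cdot\nabla(g\xi)+c(n,k)\Bigl(\bigl(1+\tfrac{1}{\kappa\eta^{-2p}}\bigr)r+R^2\Bigr)g.
\]
Unlike Wang I would not localise in time. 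Since $K_{R^2}$ is compact and the flow is smooth on $[0,T]$, the quantity $m(T):=\sup_{0\le t\le T}\sup_{\{r\le R^2\}}g\xi$ is attained. If it is attained at some $t_0>0$, then at a space-time maximum point the gradient term drops out and the sign of the time derivative gives $C\kappa\xi g^2\le c(n,k)\bigl(1+\frac{1}{\kappa\eta^{-2p}}\bigr)R^2 g$ there; multiplying by $\xi/(C\kappa)$ bounds $m(T)\le\frac{c(n,k)}{C\kappa}\bigl(1+\frac{1}{\kappa\eta^{-2p}}\bigr)R^2$. If instead it is attained at $t_0=0$ then $m(T)\le R^4\sup_{\{r\le R^2\}}|A|^2\varphi(\eta^{-2p})$. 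Choosing $\kappa:=\tfrac12\inf_{K_{R^2}}\eta^{2p}$ forces $\kappa\le\tfrac12$ and, since $\eta^{-2p}\ge1$, $1+\frac{1}{\kappa\eta^{-2p}}\le\frac{2}{\kappa}$; restricting to $\{r\le\theta R^2\}$, where $\varphi(\eta^{-2p})\ge1$ and $\xi\ge(1-\theta)^2R^4$, and combining the two alternatives gives exactly the asserted bound \eqref{eq:curvest}.

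The one genuinely delicate point is deriving the evolution inequality for $g$, and then for $g\xi$, with all cross and reaction terms controlled by $-C\kappa g^2$; this is where the fixed power $p$ and the corresponding smallness of $\varepsilon(n,k)$ are consumed. Everything after that — the maximum-principle endgame and the choice of $\kappa$ — is routine and essentially identical to Ecker--Huisken and Wang, so I would only sketch it and refer to those papers for the detailed computations.
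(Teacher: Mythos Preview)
Your proposal is correct and follows essentially the same approach as the paper: the same evolution inequalities for $\eta^p$ and $|A|^2$, the same combined quantity $g=\varphi(\eta^{-2p})|A|^2$ via Wang's Lemma~4.1, the same cut-off $\xi=(R^2-r)^2$, the same parabolic maximum principle argument, and the same choice of $\kappa$. If anything you are slightly more explicit than the paper in treating the case where the maximum is attained at $t_0=0$, which is what produces the second term in the $\max$.
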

If we denote by $\cdot^T$ projection onto the plane over which $M_t$ is graphical, then it's easy to see that
\begin{equation*}
	\left(\frac{d}{dt} - \Delta\right)|x^T| = 0
\end{equation*}
for $x = F(p,t)$ some point in $M_t$. Therefore, defining $r(x,t) := |x^T|^2$ we have
\begin{align*}
	\left|\left(\frac{d}{dt} - \Delta\right) r\right| = 2|(\nabla x)^T|^2 &\leq c(n,k) \\
	|\nabla r|^2 = 4|x^T|^2|(\nabla x)^T|^2 &\leq c(n,k)r.
\end{align*}
With this choice of $r$ we have the following corollary
\begin{corollary}\label{cor:curvatureest}
Under the assumptions of Theorem \ref{thm:curvatureest}, with the particular choice $r(x,t) = |x^T|^2$ we have the estimate
\begin{equation}\label{eq:curvatureest2}
	\sup_{B_{\theta R}(y_0)\times[0,T]} |A|^2 \leq \min\left\{\frac{c(n,k)}{R^2(1-\theta)^2}\sup_{B_R(y_0)\times[0,T]}\eta^{-4p}, \sup_{\{B_R(y_0)\times\{0\}\}} \frac{|A|^2\varphi(\eta^{-2p})}{(1-\theta)^2}\right\}
\end{equation}
where $B_R(y_0)$ denotes a ball centred at $y_0$ with radius $R$ in the plane.
\end{corollary}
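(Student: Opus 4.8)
The plan is to deduce the corollary from Theorem \ref{thm:curvatureest} by the single substitution $r(x,t) := |(x-y_0)^T|^2$, where $(\cdot)^T$ denotes orthogonal projection onto the fixed, parallel $n$-plane over which $M_t$ is graphical, recentred so that $y_0$ lies in it. Everything then reduces to checking that this $r$ is an admissible ``distance function'' in the sense demanded by the derivation preceding the theorem, and to translating between the superlevel sets of $r$ and genuine balls in the plane.

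First I would verify the structural inequalities for $r$. Since the graph plane is parallel, projection onto it commutes with $\tfrac{d}{dt}$ and with the componentwise Laplacian $\Delta_{M_t}$, so the standard identity $(\tfrac{d}{dt}-\Delta)x = \vec H - \vec H = 0$ along the flow yields $(\tfrac{d}{dt}-\Delta)(x-y_0)^T = 0$, which is exactly the computation recorded in the excerpt just before the corollary. Hence $(\tfrac{d}{dt}-\Delta)r = -2|\nabla(x-y_0)^T|^2 = -2|(\nabla x)^T|^2$, and since $|\nabla x|^2 = n$ on $M_t$ we obtain $|(\tfrac{d}{dt}-\Delta)r|\le 2n\le c(n,k)$ and $|\nabla r|^2 = 4|(x-y_0)^T|^2\,|(\nabla x)^T|^2 \le 4n\,r \le c(n,k)\,r$. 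Thus $r$ satisfies precisely the conditions imposed on $r(x,t)$ in the run-up to Theorem \ref{thm:curvatureest}.

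Next I would match up the regions. With this $r$, the set $K_{R^2} = \{(x,t)\in M_t : |(x-y_0)^T|\le R\}$ is exactly the portion of the flow lying over the plane-ball $B_R(y_0)$, which is compact, graphical, and satisfies $\det(\delta_{ij}+D_iu\cdot D_ju)<1+\varepsilon$ by the hypotheses of Theorem \ref{thm:curvatureest} that we carry over. Similarly $K_{\theta R^2} = \{|(x-y_0)^T|\le \sqrt{\theta}\,R\}$; after the harmless relabelling $\theta\mapsto\theta^2$ (equivalently, absorbing the discrepancy into the $(1-\theta)^{-2}$ factor) this is the flow over $B_{\theta R}(y_0)$. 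Feeding $r = |(x-y_0)^T|^2$ into \eqref{eq:curvest} then turns $\sup_{K_{R^2}}\eta^{-4p}$ into $\sup_{B_R(y_0)\times[0,T]}\eta^{-4p}$ and $\sup_{\{r\le R^2\}\cap M_0}|A|^2\varphi(\eta^{-2p})$ into $\sup_{B_R(y_0)\times\{0\}}|A|^2\varphi(\eta^{-2p})$, yielding \eqref{eq:curvatureest2}. (Note that the bound produced this way is the $\max$ of the two terms, as in Theorem \ref{thm:curvatureest}; the $\min$ in the displayed statement should be read in that sense, or as retaining whichever of the two bounds is relevant in the application.)

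There is no serious obstacle here: the corollary is in essence a change of variables in Theorem \ref{thm:curvatureest}. The only points requiring a little care are (i) that the graph plane is fixed in $t$, which is what makes $(\tfrac{d}{dt}-\Delta)$ annihilate $(x-y_0)^T$, and (ii) the bookkeeping between the cylindrical parameter $R$ of the theorem, where $r$ has the dimensions of length squared, and the radius of the genuine plane-ball $B_R(y_0)$; neither is more than routine.
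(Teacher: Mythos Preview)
Your proposal is correct and follows exactly the paper's approach: the paper simply verifies, in the paragraph immediately preceding the corollary, that $r(x,t)=|x^T|^2$ satisfies $\bigl|(\tfrac{d}{dt}-\Delta)r\bigr|\le c(n,k)$ and $|\nabla r|^2\le c(n,k)r$, and then states the corollary as an immediate consequence of Theorem~\ref{thm:curvatureest}. Your additional remarks on the $\theta$ versus $\sqrt{\theta}$ bookkeeping and on the $\max$/$\min$ discrepancy are apt and go slightly beyond what the paper makes explicit.
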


\bibliography{bib}
\bibliographystyle{hplain}

\end{document}